\documentclass[12pt,draft]{article}
\usepackage{amsmath,amssymb,amsthm}
\usepackage{color}
\usepackage{bm} % ¥bm{a} (ベクトル)
\usepackage{ascmac} % \itembox 等

\setlength{\textwidth}{16cm}
\setlength{\textheight}{23cm}
\setlength{\oddsidemargin}{0cm}
\setlength\evensidemargin{0cm}
\setlength{\topmargin}{-1cm}

\newtheorem{thm}{Theorem}
\newtheorem{crl}{Corollary}
\newtheorem{cnj}{Conjecture}
\newtheorem{prp}{Proposition}

\newtheorem{rmk}{Remark}
\newtheorem{dfn}{Definition}
\newtheorem{exm}{Example}

 % smallmatrix 用の vdots
 % smallmatrix 用の ddots

\newcommand{\R}{\Rightarrow}

\newcommand{\LR}{\Leftrightarrow}
\newcommand{\st}{\stackrel}
\newcommand{\ra}{\rightarrow}
\newcommand{\mt}{\mapsto}

\newcommand{\HFR}{\mathrm{Hom}(F,\mathbb R)}

\newcommand{\HKC}{\mathrm{Hom}(K,\mathbb C)}

\newcommand{\prn}{\mathbb R_{+}}

\newcommand{\nni}{\mathbb Z_{\geq 0}}
\newcommand{\ttt}{\,{}^t}

\renewcommand{\Re}{\mathrm{Re}}

\newcommand{\oq}{\overline{\mathbb Q}}

\makeatletter
\newcommand{\subjclass}[2][2010]{%
  \let\@oldtitle\@title%
  \gdef\@title{\@oldtitle\footnotetext{#1 \emph{Mathematics subject classification(s).} #2}}%
}
\newcommand{\keywords}[1]{%
  \let\@@oldtitle\@title%
  \gdef\@title{\@@oldtitle\footnotetext{\emph{Key words and phrases.} #1.}}%
}
\makeatother

\title{On a common refinement of Stark units and Gross-Stark units}
\author{Tomokazu Kashio\thanks{Tokyo University of Science, \texttt{kashio\_tomokazu@ma.noda.tus.ac.jp}}}
\subjclass{11G15, 11R27, 11R37, 11R42, 11R80, 11S40, 11S80, 14F30, 14K20, 14K22, 33B15.}
\keywords{Stark's conjectures, the Gross-Stark conjecture, CM-periods, ($p$-adic) multiple gamma functions, $p$-adic Hodge theory}

%11   Number theory

%11G  Arithmetic algebraic geometry (Diophantine geometry)  
%11G15   Complex multiplication and moduli of abelian varieties  

%11M  Zeta and $L$-functions: analytic theory 
%11M06   $\zeta (s)$ and $L(s, \chi)$ 
%11M35   Hurwitz and Lerch zeta functions 

%11R  Algebraic number theory: global fields  
%11R27   Units and factorization 
%11R37   Class field theory 
%11R42   Zeta functions and $L$-functions of number fields  
%11R80   Totally real fields  

%11S  Algebraic number theory: local and $p$-adic fields 
%11S40   Zeta functions and $L$-functions 
%11S80   Algebraic number theory: local and $p$-adic fields Other analytic theory (analogues of beta and gamma functions, $p$-adic integration, etc.)  

%14   Algebraic geometry 

%14F  (Co)homology theory  
%14F30   $p$-adic cohomology, crystalline cohomology 

%14H  Curves 
%14H45   Special curves and curves of low genus 

%14K  Abelian varieties and schemes 
%14K20   Analytic theory; abelian integrals and differentials 
%14K22   Abelian varieties and schemes Complex multiplication [See also 11G15] 

%33   Special functions  

%33B  Elementary classical functions 
%33B15   Gamma, beta and polygamma functions 

\begin{document}

%% \begin{CJK*}{UTF8}{min} %% ipad

\maketitle

\begin{abstract}
The purpose of this paper is to formulate and study a common refinement of a version of Stark's conjecture and its $p$-adic analogue, 
in terms of Fontaine's $p$-adic period ring and $p$-adic Hodge theory.
We construct period-ring-valued functions under a generalization of Yoshida's conjecture on the transcendental parts of CM-periods.
Then we conjecture a reciprocity law on their special values concerning the absolute Frobenius action.
We show that our conjecture implies a part of Stark's conjecture when the base field is an arbitrary real field and the splitting place is its real place.
It also implies a refinement of the Gross-Stark conjecture under a certain assumption.
When the base field is the rational number field, our conjecture follows from Coleman's formula on Fermat curves.
We also prove some partial results in other cases.
\end{abstract}

% \tableofcontents

\section{Introduction}

Let $K/F$ be an abelian extension of number fields, $S$ a finite set of places.
We assume that
\begin{itemize}
\item[(a)] $S$ contains all infinite places of $F$ and ramified places in $K/F$.
\item[(b)] $S$ contains a distinguished place $v$ which splits completely in $K/F$.
We fix a place $w$ of $K$ lying above $v$.
\item[(c)] $|S|\geq 2$.
\end{itemize}
We define the partial zeta function associated to $S$ and $\sigma \in \mathrm{Gal}(K/F)$ by
\begin{align*}
\zeta_S(s,\sigma):=\sum_{\mathfrak a \subset \mathcal O_F,\ (\mathfrak a,S)=1,\ (\frac{K/F}{\mathfrak a})=\sigma} N\mathfrak a^{-s},
\end{align*}
where $\mathfrak a$ runs over all integral ideals of $F$ relatively prime to any prime ideals in $S$ whose images 
under the Artin symbol $(\frac{K/F}{*})$ is equal to $\sigma$.
The assumptions (b), (c) imply $\zeta_S(0,\sigma)=0$.
Then a version of Stark's conjectures \cite{St}, which is called the rank $1$ abelian Stark conjecture, states that 

\begin{cnj} \label{Sc}
There exists an element $\epsilon \in K^\times$ satisfying that 
\begin{enumerate}
\item If $|S| >2$, then $\epsilon$ is a $v$-unit.
If $S=\{v,v'\}$, then $\epsilon$ is an $S$-unit and $|\epsilon|_{w'}$ is constant whenever $w'$ is a place of $K$ lying above $v'$.
\item $\log |\epsilon^\sigma|_w=-W_K\zeta'_S(0,\sigma)$ $(\sigma \in \mathrm{Gal}(K/F))$.
\item $K(\epsilon^{\frac{1}{W_K}})/F$ is an abelian extension.
\end{enumerate}
Here $W_K$ denotes the number of roots of unity in $K^\times$.
\end{cnj}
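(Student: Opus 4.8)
The plan is to proceed through the two classical cases in which Conjecture \ref{Sc} is a theorem, and then indicate how one would bootstrap toward the general base field. When $F=\mathbb{Q}$, the place $v$ is either the archimedean place or a rational prime splitting completely in $K/\mathbb{Q}$, so $K$ lies in a cyclotomic field; one takes $\epsilon$ to be an explicit cyclotomic unit of the form $\prod(1-\zeta_m^a)^{n_a}$ (respectively a Gauss-sum type element when $v$ is finite), and condition (ii) becomes the Lerch/Kronecker limit formula expressing $\zeta'_S(0,\sigma)$ as $-W_K^{-1}\log|1-\zeta_m^a|$. Condition (iii) is automatic, since $K(\epsilon^{1/W_K})$ remains abelian over $\mathbb{Q}$. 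When $F$ is imaginary quadratic and $v$ is the archimedean place, one replaces cyclotomic units by elliptic units in a suitable ray class field, and (ii) is the second Kronecker limit formula; again abelianness is built in. Thus the real content of the conjecture is the case of an arbitrary (in particular, totally real) base field, and my approach for the case treated in this paper is to construct $\epsilon$ analytically from derivatives of partial zeta functions.

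For $F$ a real field and $v$ the distinguished real place, I would build $\epsilon$ by setting, for each $\sigma$, the quantity $\exp(-W_K\zeta'_S(0,\sigma))$ and showing it is the $w$-absolute value of an element of $K^\times$. The steps: (1) use Shintani's cone decomposition to write $\zeta'_S(0,\sigma)$ as a finite sum of $\frac{\partial}{\partial s}\zeta(s,x,C)\big|_{s=0}$ over Shintani cones $C$ with algebraic data; (2) express each such derivative in terms of values $\log\Gamma_r$ of Barnes' multiple gamma function (equivalently the multiple sine function) at totally real algebraic arguments; (3) invoke the period-ring valued functions constructed in the body of the paper to show that the resulting complex number, which a priori is transcendental, in fact lies in $\overline{\mathbb{Q}}$ and, after combining the contributions over the relevant embeddings, is the image under $w$ of a genuine element $\epsilon\in K^\times$; (4) verify the normalization (ii) at $w$, and, when $|S|=2$, the constancy of $|\epsilon|_{w'}$ over places $w'\mid v'$, using the functional equation and the symmetry of the chosen cone data; (5) deduce the reciprocity statement (iii) from a Galois-equivariance property of the period-ring valued functions.

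The main obstacle is step (3) together with (5): proving both the algebraicity and the Galois-equivariant rationality of an analytically defined number, and then the abelian reciprocity law for $\epsilon^{1/W_K}$. This is precisely where an unconditional argument is unavailable — it amounts to (a generalization of) Yoshida's conjecture on the transcendental parts of CM-periods — and the strategy here is not to attack Conjecture \ref{Sc} head-on but to lift the problem into Fontaine's $p$-adic period ring, impose a reciprocity law for the absolute Frobenius on the $p$-adic analogues of these functions, and derive from it, simultaneously, the archimedean statement (ii)--(iii) of Conjecture \ref{Sc} and a refinement of the $p$-adic Gross--Stark statement. One then proves Conjecture \ref{Sc} \emph{conditionally} on that period-ring reciprocity conjecture, and discharges the hypothesis unconditionally in the cases where independent input exists: $F=\mathbb{Q}$ via Coleman's explicit reciprocity law and his formula on Fermat curves, and $F$ imaginary quadratic via elliptic units and $p$-adic $L$-functions. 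The genuinely difficult case, $F$ totally real of degree $\geq 2$, is expected to remain conditional, with partial results --- small-degree $K/F$, or reductions to known cases of the Gross--Stark conjecture established by Dasgupta--Darmon--Pollack and Dasgupta--Kakde--Ventullo --- as the realistic unconditional output.
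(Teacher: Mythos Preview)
The statement you are addressing is \emph{Conjecture~\ref{Sc}}, the rank~1 abelian Stark conjecture. The paper does not prove it and does not claim to; it is stated as background and remains open in general. There is therefore no ``paper's own proof'' to compare your attempt against. What you have written is not a proof but a survey of the known cases ($F=\mathbb{Q}$, $F$ imaginary quadratic) together with a strategy outline that, as you yourself acknowledge, stalls at steps~(3) and~(5) for lack of an unconditional algebraicity/reciprocity input.

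Your description of the paper's actual contribution is roughly in the right direction but overstates it in two respects. First, the paper does not treat $F$ imaginary quadratic at all; throughout, $F$ is totally real. Second, and more importantly, the paper does \emph{not} show that its period-ring reciprocity conjecture (Conjecture~\ref{maincnj}) implies Conjecture~\ref{Sc}. What Theorem~\ref{cns} establishes is that Conjectures~\ref{Yc} and~\ref{maincnj} together imply the reciprocity law~(\ref{rl}) on Stark units \emph{up to roots of unity} --- that is, a weakening of~(\ref{rlofSu}), which itself is only a consequence of Conjecture~\ref{Sc} under assumption~(d). Conditions~(i) (the $v$-unit property) and~(iii) (abelianness of $K(\epsilon^{1/W_K})/F$) are not addressed by the paper's machinery, and even condition~(ii) is recovered only modulo $\mu_\infty$. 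So your step~(5), ``deduce~(iii) from Galois-equivariance of the period-ring valued functions,'' has no counterpart in the paper and would require genuinely new ideas.

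In short: there is no proof here to assess, because the target is a conjecture the paper leaves open; and your proposal, while a fair sketch of the landscape, conflates the paper's partial conditional results with a full conditional proof of Conjecture~\ref{Sc}.
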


In addition to (a), (b), (c) we assume that
\begin{itemize}
\item[(d)] $F$ is a totally real field and $v$ is a real place.
\end{itemize} 
Then Conjecture \ref{Sc} implies the following algebraicity property and reciprocity law:
\begin{align}
u(\sigma)&:=\exp(\zeta'_S(0,\sigma)) \in \oq^\times \quad (\sigma \in \mathrm{Gal}(\oq/F)), \notag \\
u(\sigma)^\tau&=\pm u(\tau|_K \sigma) \quad (\sigma,\tau \in \mathrm{Gal}(\oq/F)). \label{rlofSu}
\end{align}
In this paper, we call the element $u(\sigma)=\exp(\zeta'_S(0,\sigma))$ in the case (d) a Stark unit,
(In fact, the element $\epsilon$ in Conjecture \ref{Sc} is called a Stark unit in more general setting.)

On the other hand, Gross conjectured the following $p$-adic analogue:
we assume that, in addition to (a), (b), (c),   
\begin{itemize}
\item[(e)] $F$ is a totally real field, $K$ is a CM-field.
\item[(f)] $v$ is a finite place lying above a rational prime $p$.
\item[(g)] $S$ contains all places of $F$ lying above $p$.
\end{itemize}
Then there exists the $p$-adic interpolation function $\zeta_{S,p}(s,\sigma)$ of $\zeta_S(s,\sigma)$. 
Let $R:=S-\{v\}$. There obviously exist $W \in \mathbb N$ and a $v$-unit $\epsilon' \in K$ which satisfy
\begin{align*}
\log |\epsilon'^\sigma|_w=-W\zeta'_S(0,\sigma)=-W\zeta_R(0,\sigma)\log N v.
\end{align*}
Here Conjecture \ref{Sc} implies that we may take $W:=W_K$.
Gross' conjecture states that

\begin{cnj}[{\cite[Conjecture 3.13]{Gr2}}] \label{GSc}
\begin{align*}
\log_p N_{K_w/\mathbb Q_p}(\epsilon'^\sigma) =-W\zeta'_{S,p}(0,\sigma).
\end{align*}
\end{cnj}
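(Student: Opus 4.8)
\medskip
\noindent\textbf{Sketch of a possible approach to Conjecture~\ref{GSc}.}
The plan is to regard Conjecture~\ref{GSc} as the $p$-adic shadow of the archimedean picture governed by Conjecture~\ref{Sc}. As the excerpt already observes, the $v$-unit $\epsilon'$ exists for soft reasons (finiteness of the relevant ray class group together with the rationality of $\zeta_R(0,\sigma)$), so the whole content of the conjecture is the evaluation of the single $p$-adic number $\log_p N_{K_w/\mathbb Q_p}(\epsilon'^\sigma)$ in terms of the analytic quantity $\zeta_{S,p}'(0,\sigma)$. The first step is therefore to fix $\epsilon'$ once and for all and to record the two structural features of the right-hand side: that $\zeta_{S,p}(s,\sigma)$ has a trivial zero at $s=0$ forced by the Euler factor at $v$, so that $\zeta_{S,p}'(0,\sigma)$ is the genuinely new invariant, and that $\epsilon'$ may be taken with support only at the places above $v$, so that the comparison really lives at the single place $w$.

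The second step is to put both sides into a common ``motivic'' shape. A Shintani decomposition of the partial zeta function into Barnes' multiple zeta functions over a rational cone expresses $\exp(\zeta_S'(0,\sigma))$, after Yoshida's formula, as a monomial in values of Barnes' multiple gamma function, and the very same Shintani data feed the $p$-adic interpolation, so that $\zeta_{S,p}'(0,\sigma)$ is the $p$-adic logarithm of the corresponding monomial in values of the $p$-adic multiple gamma function. Granting a generalization of Yoshida's conjecture, the complex multiple-gamma monomial is, up to an explicit algebraic factor, a product of Shimura period symbols of a CM abelian variety arising from a Jacobi-sum (Fermat-type) motive. The target then becomes: produce a single element of Fontaine's period ring whose image under a complex embedding is that CM-period and whose image under the $p$-adic comparison map is the $p$-adic multiple-gamma monomial, so that the $p$-adic logarithm of the latter is forced to be a fixed rational multiple of $\log_p N_{K_w/\mathbb Q_p}(\epsilon'^\sigma)$.

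The third step is the $p$-adic Hodge-theoretic comparison itself: realize the CM-period as a de Rham period of the Jacobi-sum motive, invoke Fontaine's $B_{\mathrm{dR}}$-comparison isomorphism together with the crystalline Frobenius (equivalently, Coleman integration), and match the Frobenius eigenvalue with the absolute Frobenius attached to $w$. When $F=\mathbb Q$ this is precisely Coleman's explicit formula for the $p$-adic periods of Fermat curves, and Conjecture~\ref{GSc} follows unconditionally in that case.

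The main obstacle is the general totally real base field. There one must simultaneously control a Shintani generation of the ray class group compatible with the measure-theoretic construction of $\zeta_{S,p}$, a generalization of Yoshida's conjecture identifying the transcendental part of the relevant CM-periods with Shimura's period symbol, and --- most delicately --- a reciprocity law for the absolute Frobenius acting on the resulting period-ring-valued functions; this last is exactly the ``common refinement'' the present paper is designed to formulate. I therefore expect Conjecture~\ref{GSc}, possibly in a refined form, to be deducible from the paper's main reciprocity conjecture, with the unconditional cases limited essentially to $F=\mathbb Q$ via Coleman's formula, together with whatever partial Frobenius-reciprocity statements can be proved directly. (An alternative route, and the only presently known unconditional proof in general, is the Eisenstein-congruence and Galois-cohomology method of Darmon--Dasgupta--Pollack and Dasgupta--Kakde--Ventullo, which establishes Conjecture~\ref{GSc} but does not yield the finer period-ring information sought here.)
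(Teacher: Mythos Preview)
Your proposal is not a proof, and neither does the paper offer one: Conjecture~\ref{GSc} is stated in the paper as a conjecture, with the remark that Dasgupta--Darmon--Pollack~\cite{DDP} proved it under hypotheses and Ventullo~\cite{Ve} removed them. So there is no ``paper's own proof'' to compare against.

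What the paper does establish is exactly the conditional implication you describe in your second and third steps, but with sharper bookkeeping than your sketch. Specifically, Theorem~\ref{cns2} shows that Conjecture~\ref{maincnj}-(i) together with the strengthened period assumption~(\ref{assump}) implies the refinement~(\ref{rGc}) of Conjecture~\ref{GSc}. The mechanism is: iterate Conjecture~\ref{maincnj}-(i) $f$ times (where $f$ is the residue degree of $\mathfrak p$ in $H_{\mathrm{CM}}$), take the product over $\phi_{H,0}^{-1}(\sigma)$, and use~(\ref{assimply}) to identify the Frobenius eigenvalue with the norm of a generator of $\mathfrak P^h$. Your sketch has the right architecture but glosses over this iteration and the cancellation of the period parts, which is where the actual work in the paper lies.

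Two corrections to your sketch. First, you speak of ``a single element of Fontaine's period ring whose image under a complex embedding is that CM-period''; this is not quite the setup --- the paper works with the \emph{ratio} $[p_K:p_{K,p}]$ and the function $\Gamma(c;D,\mathfrak a_c)$ is defined so that the complex and $p$-adic CM-periods cancel against $\exp(X(c))$ and $\exp_p(X_p(c))$ respectively, leaving an algebraic quantity on which Frobenius acts. Second, the $F=\mathbb Q$ case (Theorem~\ref{cns3}) establishes Conjecture~\ref{maincnj}, not Conjecture~\ref{GSc} directly; Conjecture~\ref{GSc} for $F=\mathbb Q$ is classical (Gross--Koblitz, essentially), and what Coleman's formula buys here is the finer reciprocity law on $\Gamma(c)$.
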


In this paper, we call $\exp_p(\zeta'_{S,p}(0,\sigma))$, or its refinement given in \cite{KY1}, a Gross-Stark unit.
(Strictly speaking, a modified version of $\epsilon'$ is often called a Gross-Stark unit.)
We note that Dasgupta-Darmon-Pollack \cite{DDP} proved Conjecture \ref{GSc} under certain assumptions 
and Ventullo \cite{Ve} removed those assumptions.

The aim of this paper is to study a common refinement of (a part of, or, a generalization of) these two conjectures, which is formulated in terms of a period-ring-valued function.
Here the period ring is in the sense of $p$-adic Hodge theory.
The author believes that such an application of $p$-adic Hodge theory to this problem is a ``brand new approach''.
Let $C_\mathfrak f$ be the narrow ray class group modulo $\mathfrak f$ over a totally real field $F$.
We denote by $\infty_{\mathrm{id}},\mathfrak p_{\mathrm{id}}$ 
the real place, the finite place corresponding to fixed embeddings $\mathrm{id} \colon F \hookrightarrow \mathbb R,\mathbb C_p$ respectively.
When $\mathfrak p_\mathrm{id}$ divides $\mathfrak f$, we shall construct a period-ring-valued functions $\Gamma(c)$ for $c \in C_\mathfrak f$
which takes values in Fontaine's $p$-adic period ring, 
assuming Conjecture \ref{Yc} on the transcendental part of CM-periods.
We see that $\Gamma(c)$ is a ``common refinement'' of Stark units and Gross-Stark units in the following sense:
let $H$ be an abelian extension of $F$ with $\phi_H \colon C_\mathfrak f \ra \mathrm{Gal}(H/F)$ the Artin map.
By (\ref{rl}), (\ref{rGc}), Proposition \ref{prispr}, we see that 
\begin{align} 
&\text{the product }\prod_{c \in \phi_H^{-1}(\sigma)} \Gamma(c) \text{ equals a rational power of }
\begin{cases}
\text{ a Stark unit } \\
\text{ a Gross-Stark unit }
\end{cases} \notag \\
&\text{up to a root of unity, when the place }
\begin{cases}
\infty_\mathrm{id} \\
\mathfrak p_\mathrm{id}
\end{cases}
\text{splits completely in $H/F$}. \label{product}
\end{align}
We also construct another period-ring-valued function $\Gamma(c;D,\mathfrak a_c)$ when $\mathfrak p_\mathrm{id} \nmid \mathfrak f$. 
The relation between $\Gamma(c)$ and $\Gamma(c;D,\mathfrak a_c)$ is given in Proposition \ref{relofgc}-(iv).
Then we formulate Conjecture \ref{maincnj} on a reciprocity law on their special values, concerning the absolute Frobenius action on the $p$-adic period ring.
As evidence for conjectures, we show that 
\begin{itemize}
\item Conjectures \ref{Yc}, \ref{maincnj} imply the reciprocity law (\ref{rlofSu}) on Stark units, up to a root of unity (Theorem \ref{cns}).
\item A strong version (\ref{assump}) of Conjecture \ref{Yc} and Conjecture \ref{maincnj} imply certain refinements of Conjecture \ref{GSc} (Theorem \ref{cns2}).
\item When $F=\mathbb Q$, $p\neq 2$, Conjecture \ref{maincnj} holds true (Theorem \ref{cns3}).
\item When $F\neq \mathbb Q$, $p\neq 2$, we prove a partial result (Theorem \ref{partial}). 
In particular, if the narrow ray class field modulo $\mathfrak f$ is abelian over $\mathbb Q$ and 
if $\mathfrak p=(p) \nmid \mathfrak f$ or $\mathfrak p\mid \mathfrak f$, then Conjecture \ref{maincnj} holds true.
\end{itemize} 
Here, when $F=\mathbb Q$, Conjecture \ref{Yc} and (\ref{assump}) follow from well-known facts on periods of Fermat curves.
We note that each Stark unit and each Gross-Stark unit is expressed as a finite product of $\Gamma(c)$'s as in (\ref{product}).
Therefore Conjecture \ref{maincnj} on each $\Gamma(c),\Gamma(c;D,\mathfrak a_c)$ is a non-trivial refinement of (\ref{rlofSu}) and Conjecture \ref{GSc}. 

The rest of this paper is structured as follows.
In \S 2, we recall the definition and some properties of Yoshida's class invariant $X(c)$ and its $p$-adic analogue $X_p(c)$:
associated with each ideal class $c \in C_\mathfrak f$, Yoshida \cite{Yo} defined the invariant $X(c)$ 
as a finite sum of $\log$ of Barnes' multiple gamma functions with certain correction terms.
Yoshida and the author \cite{KY1} defined its $p$-adic analogue $X_p(c)$ in a quite similar manner.
Then both of Stark units and Gross-Stark units can be expressed in terms of the ``ratios'' $[\exp(X(c)):\exp_p(X_p(c))]$.
Such expressions were obtained in \cite{Ka4}.
In \S 3, we study some properties of CM-periods. 
Shimura's period symbol $p_K$ was defined by decomposing period integrals of abelian varieties with CM \cite{Shim}.
It can be expressed also as periods of motives associated with algebraic Hecke characters.
Then we introduce (a slight generalization of) Yoshida's conjecture on the transcendental part of CM-periods in \S 4.
Roughly speaking, for each $c$, the transcendental part of $\exp(X(c))$ is given as a product of rational powers of CM-periods (Conjecture \ref{Yc}).
In \S 5, we state the main results.
First we define a $p$-adic analogue of Shimura's period symbol by using comparison isomorphisms of $p$-adic Hodge theory, as usual.
Then, under Conjecture \ref{Yc}, 
we construct and study two types of period-ring-valued function $\Gamma(c),\Gamma(c;D,\mathfrak a_c)$, which take values in Fontaine's period ring.
We formulate Conjecture \ref{maincnj} on the absolute Frobenius action on their special values, 
which is a common refinement of the reciprocity law (\ref{rlofSu}) on Stark units, up to a root of unity, 
and Conjecture \ref{GSc} of Gross.
In \S 6, we show that Conjecture \ref{maincnj} holds true when $F=\mathbb Q$, $p\neq 2$,
which was essentially proved in \cite{Ka2} by using Coleman's results in \cite{Co}.
We note that it was proved by using properties of periods and $p$-adic periods of Fermat curves,
instead of using Euler's reflection formula $\Gamma(z)\Gamma(1-z)=\frac{\pi}{\sin(\pi z)}$.
Namely, we gave an alternative proof of a part of Stark's conjecture in the case $F=\mathbb Q$.
In \S 7, we also prove some partial results on Conjecture \ref{maincnj} in the case $F\neq \mathbb Q$.

\section{Yoshida's class invariant and its $p$-adic analogue}

In this section, we recall the definitions of Yoshida's class invariant $X(c)$ and its $p$-adic analogue $X_p(c)$.
In particular, we introduce a relation between $X(c),X_p(c)$ and Stark units, Gross-Stark units.

\subsection{Barnes' multiple gamma function and its $p$-adic analogue}

We denote by $\prn$ the set of all positive real numbers, by $\mathbb C_p$ the completion of 
algebraic closure $\overline{\mathbb Q_p}$ of $\mathbb Q_p$ with $p$ a rational prime, 
by $|z |_p=p^{-\mathrm{ord}_p(z)}$ the $p$-adic absolute value of $z\in \mathbb C_p$.
We regard any number field $k$ as a subfield of $\oq$, and fix embeddings $\oq \hookrightarrow \mathbb C,\mathbb C_p$.
In particular, $\mathrm{id} \colon k \hookrightarrow \mathbb C,\mathbb C_p$ are well-defined.

\begin{dfn} \label{mult}
\begin{enumerate}
\item Let $z \in \prn$, $\bm v \in \prn^r$.
Then Barnes' multiple zeta function $\zeta(s,\bm v,z)$ is defined as the meromorphic continuation of
\begin{align*}
\zeta(s,\bm v,z):=\sum_{\bm m \in \nni^r} (z+\bm m \ttt \bm v)^{-s} \quad (\Re(s)>r),
\end{align*}
which is analytic at $s=0$.
Here we put $(m_1,\dots,m_r) \ttt (v_1,\dots,v_r):=m_1v_1+\dots+m_rv_r$.
We define Barnes' multiple gamma function as
\begin{align*}
\Gamma(z,\bm v):=\exp\left(\frac{d}{ds}\zeta(s,\bm v,z)|_{s=0}\right).
\end{align*}
Note that we slightly modified this definition from Barnes' one.
\item Let $z \in \oq$, $\bm v =(v_1,\dots,v_r) \in \oq^r$.
We assume that 
\begin{enumerate}
\item $z \in \prn$, $\bm v \in \prn^r$ with respect to $\oq \hookrightarrow \mathbb C$.
\item $|z|_p > |v_1|_p,\dots,|v_r|_p$ with respect to $\oq \hookrightarrow \mathbb C_p$.
\end{enumerate}
Then the $p$-adic multiple zeta function $\zeta_p(s,\bm v,z)$ is characterized by
\begin{align*}
\zeta_p(-k,\bm v,z)=(p^{\mathrm{ord}_p(z)}\theta_p(z))^k\zeta_p(-k,\bm v,z) \quad (0\leq k \in \mathbb Z).
\end{align*}
Here $\mathrm{ord}_p\colon \mathbb C_p^\times \ra \mathbb Q$, $\theta_p\colon \mathbb C_p^\times \ra \mu_{(p)}$ 
{\rm(}$\mu_{(p)}$ denotes the group of all roots of unity of prime-to-$p$ order{\rm)} 
are unique group homomorphisms satisfying
\begin{align*}
|p^{-\mathrm{ord}_p(z)}\theta_p(z)^{-1}z -1|_p< 1 \quad (z\in \mathbb C_p^\times).
\end{align*}
We define the $p$-adic $\log$ multiple gamma function as 
\begin{align*}
L\Gamma_p(z,\bm v):=\frac{d}{ds}\zeta_p(s,\bm v,z)|_{s=0}.
\end{align*}
\item Let $\log_p$ be Iwasawa's $p$-adic $\log$ function:
\begin{align*}
\log_p(z):=\sum_{m=1}^\infty \frac{-(1-p^{-\mathrm{ord}_p(z)}\theta_p(z)^{-1}z)^m}{m} \quad (z\in \mathbb C_p^\times).
\end{align*}
We fix a group homomorphism
\begin{align*}
\exp_p\colon \mathbb C_p \ra \{z \in \mathbb C_p^\times  \mid |z|_p=1\}
\end{align*}
which coincides with the usual power series $\sum_{m=0}^\infty \frac{z^m}{m!}$ on a neighborhood of $0$.
In particular we see that $\exp_p(\log_p(z))\equiv p^{-\mathrm{ord}_p(z)}z$ modulo the roots of unity.
\end{enumerate}
\end{dfn}
For details concerning Barnes' functions or their $p$-adic analogues, see \cite[Chap.\ I, \S 1]{Yo} or \cite{Ka1} respectively.
For the latter one, we presented a short survey also in \cite[\S 2]{Ka4}.

\subsection{Definition of $X(c),X_p(c)$}

Let $F$ be a totally real field of degree $n$, $\HFR$ the set of all real embeddings of $F$.
We identify
\begin{align*}
F\otimes_\mathbb Q \mathbb R = \mathbb R^n,\ \sum_{i=1}^k a_i\otimes b_i \mt (\sum_{i=1}^k \iota(a_i)b_i)_{\iota \in \HFR}
\end{align*}
and define the totally positive part of $F\otimes_\mathbb Q \mathbb R$ as
\begin{align*}
(F \otimes_\mathbb Q \mathbb R)_+:=\prn^n.
\end{align*}
We denote by $\mathcal O$, $\mathcal O_+$, $E_+$
the ring of integers of $F$, the set of all totally positive elements in $\mathcal O$, the group of all totally positive units in $F$, respectively.
By \cite[Proposition 4]{Shin}, there always exists a fundamental domain of $(F\otimes_\mathbb Q \mathbb R)_+/ E_+$ 
which has good properties in the following sense.

\begin{dfn}
\begin{enumerate}
\item Let $v_1,\dots,v_r \in \mathcal O_+$ be linearly independent as elements in $\mathbb R^n$. 
Then we define the cone with basis $\bm v:=(v_1,\dots,v_r) $ by
\begin{align*}
C(\bm v):=\{\bm t \ttt \bm v \mid \bm t \in \prn^r\} \subset (F \otimes_\mathbb Q \mathbb R)_+.
\end{align*}
\item We call a fundamental domain $D$ of $(F\otimes_\mathbb Q \mathbb R)_+/ E_+$ a Shintani domain 
if it can be expressed as a finite disjoint union of cones with basis in $O_+$;
namely, we can write
\begin{align*}
D=\coprod_{j\in J} C(\bm v_j)\ (\bm v_j\in \mathcal O_+^{r(j)}),\quad (F \otimes_\mathbb Q \mathbb R)_+=\coprod_{\epsilon \in E_+} \epsilon D,
\end{align*}
where $J$ is a finite set of indices, $r(j)\in \mathbb N$.
\end{enumerate}
\end{dfn}

Let $\mathfrak f$ be an integral ideal of $F$.
We denote by $C_\mathfrak f$ the narrow ray class group modulo $\mathfrak f$.
We fix the following data:
\begin{itemize}
\item $D:=\coprod_{j \in J}C(\bm v_j)$ ($\bm v_j \in \mathcal O_+^{r(j)})$ is a Shintani domain.
\item For each $c \in C_\mathfrak f$, we take an integral ideal $\mathfrak a_c$ of $F$ satisfying $\mathfrak a_c \mathfrak f \in \pi(c)$,
where $\pi\colon C_\mathfrak f \ra C_{(1)}$ is the natural projection. 
\item For each prime ideal $\mathfrak q$ of $F$, we choose a generator $\pi_\mathfrak q \in \mathcal O_+$ of the principal ideal $\mathfrak q^{h_F^+}$,
where $h_F^+:=|C_{(1)}|$ is the narrow class number of $F$. Moreover we put
\begin{align*}
\pi_\mathfrak g:=\prod_{\mathfrak q \mid \mathfrak g} \pi_\mathfrak q^{\mathrm{ord}_\mathfrak q \mathfrak g}.
\end{align*}
\end{itemize}
We define
\begin{align*}
R(c,\bm v_j)&:=\{\bm x \in (\mathbb Q\cap (0,1])^{r(j)} \mid \mathcal O \supset (\bm x \ttt \bm v_j) \mathfrak a_c\mathfrak f \in c\},
\end{align*}
which is a finite set.
Then Yoshida's class invariant associated with the ideal class $c \in C_\mathfrak f$ is defined as the sum of three invariants
(for details, see \cite[Chap.\ III, (3.6)--(3.9)]{Yo}. The $W$-term is slightly modified in \cite[(4.3)]{KY1}, \cite[\S 2]{Ka3}):
\begin{align*}
X(c):=X(c;D,\mathfrak a_c)
:=G(c;D,\mathfrak a_c)+V(c;D,\mathfrak a_c)+W(c;D,\mathfrak a_c),
\end{align*}
where we put
\begin{align*}
G(c;D,\mathfrak a_c)&:=\left[\frac{d}{ds}\sum_{z \in (\mathfrak a_c\mathfrak f )^{-1} \cap D ,\ z \mathfrak a_c\mathfrak f \in c} z^{-s}\right]_{s=0} \\
&=\sum_{j \in J}\sum_{\bm x \in R(c,\bm v_j)}\log \Gamma(\bm x \ttt \bm v_j,\bm v_j), \\
W(c;D,\mathfrak a_c)&:=-\frac{\zeta(0,c)}{h_F^+} \log \pi_{\mathfrak a_c\mathfrak f}.
\end{align*}
We omit the detailed definition of $V(c;D,\mathfrak a_c)$ ($=V(c)$ in \cite[Chap.\ III, (3.7)]{Yo}).
Instead, we introduce \cite[Appendix I, Theorem]{KY2}:
there exist $a_i \in F^\times$, $\epsilon_i \in E_+$ satisfying 
\begin{align} \label{V}
V(c;D,\mathfrak a_c)=\sum_{i=1}^{n-1} a_i \log \epsilon_i. 
\end{align}

Next, let $p$ be a rational prime. 
We define the prime ideal 
\begin{align*}
\mathfrak p:=\mathfrak p_{\mathrm{id}}:=\{z \in \mathcal O \mid |z|_p<1\}
\end{align*}
corresponding to the $p$-adic topology on $F$ induced by $\mathrm{id}\colon F \hookrightarrow \mathbb C_p$.
Assume that $\mathfrak p \mid \mathfrak f$. Then we define \cite[\S 3]{KY1}
\begin{align*}
X_p(c)&:=X_p(c;D,\mathfrak a_c):=G_p(c;D,\mathfrak a_c)+V_p(c;D,\mathfrak a_c)+W_p(c;D,\mathfrak a_c), \\
G_p(c;D,\mathfrak a_c)&:=\sum_{j \in J}\sum_{\bm x \in R(c,\bm v_j)}L\Gamma_p(\bm x \ttt \bm v_j,\bm v_j), \\
W_p(c;D,\mathfrak a_c)&:=-\frac{\zeta(0,c)}{h_F^+} \log_p \pi_{\mathfrak a_c\mathfrak f}, \\
V_p(c;D,\mathfrak a_c)&:=\sum_{i=1}^{n-1} a_i \log_p \epsilon_i.
\end{align*}
Here we take the same elements $\pi_\mathfrak q,a_i,\epsilon_i$ as those for $V(c;D,\mathfrak a_c),W(c;D,\mathfrak a_c)$.
Note that the assumption $\mathfrak p \mid \mathfrak f$ implies that 
$(\bm x \ttt \bm v_j,\bm v_j)$ satisfies Definition \ref{mult}-(ii)-(b) for $\bm x \in R(c,\bm v_j)$.

Strictly speaking, $X(c;D,\mathfrak a_c),X_p(c;D,\mathfrak a_c)$ depend also on the choice of $\{\pi_\mathfrak q\}$,
not only on $c,D,\mathfrak a_c$.
However, by \cite[Lemma 3.11]{Ka3}, \cite[Lemma 4]{Ka4}, we see that 
\begin{align} \label{indep1}
\exp(X(c)) \in \mathbb C^\times/E_+^\mathbb Q, \ \exp_p(X_p(c)) \in \mathbb C_p^\times/E_+^\mathbb Q
\end{align}
do not depend on the choices of data $D,\mathfrak a_c,\pi_\mathfrak q$ (and $\exp_p$).
Here we put $E_+^\mathbb Q:=\{\alpha \in \oq^\times \mid \exists n \in \mathbb N\text{ s.t.\ }\alpha^n \in E_+\}$.
On the other hand, $X(c),X_p(c)$ highly depend on the choice of the embeddings $\mathrm{id}\colon F \hookrightarrow \mathbb R,\mathbb C_p$ respectively.

We shall use the following proposition repeatedly.

\begin{prp}[{\cite[(5.8), (5.9), and comments after (5.7)]{KY1}}] \label{575859}
Let $\mathfrak f$ be an integral ideal, $\mathfrak q$ a prime ideal, $\phi \colon C_{\mathfrak{fq}} \ra C_\mathfrak f$ the natural projection.
When $\mathfrak q \nmid \mathfrak f$, we denote by $[\mathfrak q] \in C_\mathfrak f$ the ideal class of $\mathfrak q$.
We fix $c \in C_\mathfrak f$ and assume that $\mathfrak q \mid \mathfrak a_c$. Then we have
\begin{align*}
&\sum_{\tilde c \in \phi^{-1}(c)} X(\tilde c;D,\mathfrak q^{-1}\mathfrak a_c) \\
&=
\begin{cases}
X(c;D,\mathfrak a_c) & (\mathfrak q \mid \mathfrak f), \\
X(c;D,\mathfrak a_c)-X([\mathfrak q]^{-1}c;D,\mathfrak q^{-1}\mathfrak a_c) +\frac{\zeta(0,[\mathfrak q]^{-1}c)}{h_F^+}\log \pi_\mathfrak q
& (\mathfrak q \nmid \mathfrak f).
\end{cases}
\end{align*}
Additionally assume that $\mathfrak p \mid \mathfrak f$. Then we have
\begin{align*}
&\sum_{\tilde c \in \phi^{-1}(c)} X_p(\tilde c;D,\mathfrak q^{-1}\mathfrak a_c) \\
&=
\begin{cases}
X_p(c;D,\mathfrak a_c) & (\mathfrak q \mid \mathfrak f), \\
X_p(c;D,\mathfrak a_c)-X_p([\mathfrak q]^{-1}c;D,\mathfrak q^{-1}\mathfrak a_c) +\frac{\zeta(0,[\mathfrak q]^{-1}c)}{h_F^+}\log_p \pi_\mathfrak q
& (\mathfrak q \nmid \mathfrak f).
\end{cases}
\end{align*}
\end{prp}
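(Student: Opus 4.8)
The plan is to decompose $X(c)=G(c)+V(c)+W(c)$ and $X_p(c)=G_p(c)+V_p(c)+W_p(c)$ and to establish a distribution relation along $\phi\colon C_{\mathfrak{fq}}\ra C_\mathfrak f$ for each of the three summands separately; adding the three will give the proposition. Set $\delta:=0$ if $\mathfrak q\mid\mathfrak f$ and $\delta:=1$ if $\mathfrak q\nmid\mathfrak f$. For each piece $Y$ the target is
\begin{align*}
\sum_{\tilde c\in\phi^{-1}(c)}Y(\tilde c;D,\mathfrak q^{-1}\mathfrak a_c)=Y(c;D,\mathfrak a_c)-\delta\,Y([\mathfrak q]^{-1}c;D,\mathfrak q^{-1}\mathfrak a_c)+(\textrm{correction}),
\end{align*}
where the correction is $0$ for $Y\in\{G,V,G_p,V_p\}$ and equals $\delta\frac{\zeta(0,[\mathfrak q]^{-1}c)}{h_F^+}\log\pi_\mathfrak q$ (resp.\ $\delta\frac{\zeta(0,[\mathfrak q]^{-1}c)}{h_F^+}\log_p\pi_\mathfrak q$) for $Y=W$ (resp.\ $Y=W_p$). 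I would use throughout that $\mathfrak q\mid\mathfrak a_c$ makes $\mathfrak q^{-1}\mathfrak a_c$ integral with $(\mathfrak q^{-1}\mathfrak a_c)\mathfrak{fq}=\mathfrak a_c\mathfrak f$; that, when $\mathfrak q\nmid\mathfrak f$, $\mathfrak q^{-1}\mathfrak a_c$ is an admissible ideal for $[\mathfrak q]^{-1}c$ at conductor $\mathfrak f$ since $(\mathfrak q,\mathfrak f)=1$ and $\pi((\mathfrak q^{-1}\mathfrak a_c)\mathfrak f)=[\mathfrak q]^{-1}\pi(c)$; and that for the $p$-adic half $\mathfrak p\mid\mathfrak f\mid\mathfrak{fq}$ makes every $X_p$ in sight well defined, while $\zeta_p(0,\bm v,z)=\zeta(0,\bm v,z)$ together with the prescription that $V_p,W_p$ reuse the \emph{same} $a_i,\epsilon_i,\pi_\mathfrak q$ and the same $\zeta(0,\cdot)$ means that each $p$-adic identity will follow from its archimedean counterpart by the formal substitution $\log\mapsto\log_p$, $\log\Gamma\mapsto L\Gamma_p$.

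For the $G$-term I would argue at the level of index sets. Since $(\mathfrak q^{-1}\mathfrak a_c)\mathfrak{fq}=\mathfrak a_c\mathfrak f$, the ambient fractional ideal and the cone bases $\bm v_j$ do not change, so $R(\tilde c,\bm v_j)=\{\bm x\in(\mathbb Q\cap(0,1])^{r(j)}\mid\mathcal O\supset(\bm x\ttt\bm v_j)\mathfrak a_c\mathfrak f\in\tilde c\}$, and as $\tilde c$ runs over $\phi^{-1}(c)$ these finite sets are disjoint with union $\{\bm x\in R(c,\bm v_j)\mid((\bm x\ttt\bm v_j)\mathfrak a_c\mathfrak f,\mathfrak q)=1\}$. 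If $\mathfrak q\mid\mathfrak f$ this union is all of $R(c,\bm v_j)$; if $\mathfrak q\nmid\mathfrak f$ its complement in $R(c,\bm v_j)$ consists of those $\bm x$ with $\mathfrak q\mid(\bm x\ttt\bm v_j)\mathfrak a_c\mathfrak f$, and the identity map on $\bm x$ identifies this complement with $R([\mathfrak q]^{-1}c,\bm v_j)$ formed using the ideal $\mathfrak q^{-1}\mathfrak a_c$ (one checks the integrality, coprimality to $\mathfrak f$, and class assertions directly). As $\log\Gamma(\bm x\ttt\bm v_j,\bm v_j)$ and $L\Gamma_p(\bm x\ttt\bm v_j,\bm v_j)$ depend only on the pair $(\bm x,\bm v_j)$, summing over $j$ and $\bm x$ yields the claimed relations for $G$ and $G_p$.

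For the $W$-term I would combine the multiplicativity $\pi_{\mathfrak a_c\mathfrak f}=\pi_{(\mathfrak q^{-1}\mathfrak a_c)\mathfrak f}\,\pi_\mathfrak q$ with the elementary partial-zeta distribution relation $\sum_{\tilde c\in\phi^{-1}(c)}\zeta(s,\tilde c)=\zeta(s,c)-\delta\,N\mathfrak q^{-s}\zeta(s,[\mathfrak q]^{-1}c)$ (obtained by splitting an integral ideal $\mathfrak a\in c$ prime to $\mathfrak f$ according to whether $\mathfrak q\mid\mathfrak a$, writing $\mathfrak a=\mathfrak q\mathfrak b$ in that case), evaluated at $s=0$. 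Since $W(\tilde c;D,\mathfrak q^{-1}\mathfrak a_c)=-\frac{\zeta(0,\tilde c)}{h_F^+}\log\pi_{\mathfrak a_c\mathfrak f}$, summing over $\tilde c$ gives $-\frac{\zeta(0,c)-\delta\zeta(0,[\mathfrak q]^{-1}c)}{h_F^+}\log\pi_{\mathfrak a_c\mathfrak f}$, and a one-line rearrangement using $\log\pi_{\mathfrak a_c\mathfrak f}=\log\pi_{(\mathfrak q^{-1}\mathfrak a_c)\mathfrak f}+\log\pi_\mathfrak q$ matches this with $W(c;D,\mathfrak a_c)-\delta\,W([\mathfrak q]^{-1}c;D,\mathfrak q^{-1}\mathfrak a_c)+\delta\frac{\zeta(0,[\mathfrak q]^{-1}c)}{h_F^+}\log\pi_\mathfrak q$. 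Replacing $\log$ by $\log_p$ handles $W_p$ verbatim.

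The remaining and least mechanical point is the $V$-term: one needs $\sum_{\tilde c\in\phi^{-1}(c)}V(\tilde c;D,\mathfrak q^{-1}\mathfrak a_c)=V(c;D,\mathfrak a_c)-\delta\,V([\mathfrak q]^{-1}c;D,\mathfrak q^{-1}\mathfrak a_c)$ with \emph{no} $\pi_\mathfrak q$-contribution, and likewise for $V_p$. For this I would return to Yoshida's definition of $V(c;D,\mathfrak a_c)$ in \cite[Chap.\ III, (3.7)]{Yo} and verify that, like $G$, it is built from the data $\{\bm x\in R(c,\bm v_j)\}$ attached to the faces of the cones $C(\bm v_j)$ through quantities depending only on $(\bm x,\bm v_j)$ together with $\zeta$-values indexed by $c$, so that the same index-set bookkeeping used for $G$ applies; alternatively I would use the explicit form $V(c;D,\mathfrak a_c)=\sum_{i=1}^{n-1}a_i\log\epsilon_i$ of \cite[Appendix I, Theorem]{KY2} and track the linear dependence of the $a_i$ on that data, so that the $p$-adic relation for $V_p$ is immediate once the archimedean one is established. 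I expect this verification to be the main obstacle — not because it is deep, but because Yoshida's $V$-correction is the least transparent of the three pieces and one must be sure it conceals no $\pi_\mathfrak q$ and genuinely decomposes along the $R(\cdot,\bm v_j)$. Once the three distribution relations are in hand, adding them makes the $-\delta Y([\mathfrak q]^{-1}c;D,\mathfrak q^{-1}\mathfrak a_c)$ terms assemble into $-\delta X([\mathfrak q]^{-1}c;D,\mathfrak q^{-1}\mathfrak a_c)$, the $G$- and $V$-corrections vanish, and the only surviving correction is the $W$-contribution $\delta\frac{\zeta(0,[\mathfrak q]^{-1}c)}{h_F^+}\log\pi_\mathfrak q$; this is exactly the asserted formula, and the $p$-adic statement comes out identically with $L\Gamma_p,\log_p,\zeta_p$ in place of $\log\Gamma,\log,\zeta$.
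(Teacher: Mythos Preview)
The paper does not prove this proposition but cites it directly from \cite[(5.8), (5.9)]{KY1}, so there is no in-paper proof to compare against. Your outline is the natural one and matches the structure of the argument in \cite{KY1}: split $X=G+V+W$, observe that $(\mathfrak q^{-1}\mathfrak a_c)\mathfrak{fq}=\mathfrak a_c\mathfrak f$ so the ambient lattice data are unchanged, and run the index-set bookkeeping on $R(\cdot,\bm v_j)$; your treatment of the $G$- and $W$-pieces is correct, and you rightly flag the $V$-term as the part requiring a return to Yoshida's explicit definition. The $p$-adic case does indeed follow formally since $V_p,W_p$ reuse the same $a_i,\epsilon_i,\pi_\mathfrak q$.
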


\subsection{Relation to Stark units, Gross-Stark units}

We introduce some results in \cite{Ka4}.
Let the notation be as in the previous subsection.
We assume that 
\begin{center}
$\mathfrak p$ divides $\mathfrak f$.
\end{center}
Then the ratio 
\begin{align*} 
[\exp(X(c;D,\mathfrak a_c)):\exp_p(X_p(c;D,\mathfrak a_c))] 
\in (\mathbb C^\times\times \mathbb C_p^\times)/(\mu_\infty\times \mu_\infty)E_+^\mathbb Q
\end{align*}
does not depend on the choices of $D,\mathfrak a_c,\pi_\mathfrak q$ by \cite[Lemmas 1, 4, Corollary 1]{Ka4}.
Here $\mu_\infty$ denotes the group of all roots of unity.
Namely we have 
\begin{align}
&\frac{\exp(X(c;D,\mathfrak a_c))}
{\exp(X(c;D',\mathfrak a_c'))},
\frac{\exp_p(X_p(c;D,\mathfrak a_c))}
{\exp_p(X_p(c;D',\mathfrak a_c'))} \in E_+^\mathbb Q, \notag \\
&\frac{\exp(X(c;D,\mathfrak a_c))}
{\exp(X(c;D',\mathfrak a_c'))} \equiv 
\frac{\exp_p(X_p(c;D,\mathfrak a_c))}
{\exp_p(X_p(c;D',\mathfrak a_c'))}  \mod \mu_\infty \label{indep2}
\end{align}
for other data $D',\mathfrak a_c',\pi_\mathfrak q'$.
In \cite[Remark 6]{Ka4} we rewrite the following two statements by using these ratios:
\begin{itemize}
\item The reciprocity law (\ref{rlofSu}) on Stark units, up to $\mu_\infty$.
\item A refinement \cite[Conjecture A$'$]{KY1} of Gross' conjecture (Conjecture \ref{GSc}).
\end{itemize}
We denote by $H_\mathfrak f$ the narrow ray class field modulo $\mathfrak f$.
For an intermediate field $H$ of $H_\mathfrak f/F$, we consider the Artin map
\begin{align*}
\phi_H\colon C_\mathfrak f \ra \mathrm{Gal}(H/F).
\end{align*}
First assume that the real place associated with $\mathrm{id}\colon F \hookrightarrow \mathbb R$ splits completely in $H/F$.
Then the reciprocity law (\ref{rlofSu}) implies that we have for $\sigma \in \mathrm{Gal}(H/F)$
\begin{align}
&\prod_{c \in \phi_H^{-1}(\sigma)}\frac{\exp(X(c))}{\exp_p(X_p(c))} 
\in \oq^\times, \notag \\
&\left(\prod_{c \in \phi_H^{-1}(\sigma)}\frac{\exp(X(c))}{\exp_p(X_p(c ))} \right)^{\tau} \notag \\
&\equiv \prod_{c \in \phi_H^{-1}(\tau|_H\sigma)}\frac{\exp(X(c))}{\exp_p(X_p(c ))} \mod \mu_\infty \quad (\tau \in \mathrm{Gal}(\oq/F)), \label{rl}
\end{align}
since \cite[Corollary 1-(ii)]{Ka4} states that 
\begin{align*}
\exp(\zeta'_S(0,\sigma)) \equiv \prod_{c \in \phi_H^{-1}(\sigma)}\frac{\exp(X(c))}{\exp_p(X_p(c))} \mod \mu_\infty
\end{align*}
for $S:=\{\text{infinite places}\}\cup \{\text{prime ideals dividing } \mathfrak f\}$.
Here the precise definition of $\prod_{c \in \phi_H^{-1}(\sigma)}\frac{\exp(X(c))}{\exp_p(X_p(c))}$ is as follows:
\cite[Theorem 4]{Ka4} implies that there exist $\epsilon(\sigma,D,\{\mathfrak a_c\}) \in E_+$, $N \in \mathbb N$, 
which depend on $D,\mathfrak a_c,\pi_\mathfrak q$,
but not on $\mathfrak p$ dividing $\mathfrak f$, satisfying 
\begin{align*}
\prod_{c \in \phi_H^{-1}(\sigma)}\exp_p(X_p(c ;D,\mathfrak a_c)) \equiv \epsilon(\sigma,D,\{\mathfrak a_c\})^\frac{1}{N} \mod \mu_\infty.
\end{align*}
Then we put 
\begin{align*}
\prod_{c \in \phi_H^{-1}(\sigma)}\frac{\exp(X(c))}{\exp_p(X_p(c ))} 
:=\frac{\prod_{c \in \phi_H^{-1}(\sigma)}\exp(X(c;D,\mathfrak a_c))}{\epsilon(\sigma,D,\{\mathfrak a_c\})^\frac{1}{N}} \in \mathbb C^\times
\end{align*}
by using the same data $D,\mathfrak a_c,\pi_\mathfrak q$.

Next assume that $H$ is a CM-field and $\mathfrak p$ splits completely in $H/F$. 
We consider the prime ideal $\mathfrak P_H$ corresponding to $\mathrm{id}\colon H \hookrightarrow \mathbb C_p$
and take a generator $\alpha_H$ of the principal ideal $\mathfrak P_H^{h_H}$ with $h_H$ the class number of $H$.
Let $\mathfrak f_0$ be the prime-to-$\mathfrak p$ part of $\mathfrak f$, $\phi_{H,0}\colon C_{\mathfrak f_0} \ra \mathrm{Gal}(H/F)$ the Artin map.
We put
\begin{align*}
\alpha:=\prod_{c \in C_{\mathfrak f_0}}\alpha_H^{\zeta(0,c^{-1})\phi_{H,0}(c)}.
\end{align*}
Yoshida and the author conjectured a refinement \cite[Conjecture A$'$]{KY1} of Conjecture \ref{GSc},
which is equivalent to the following statement by \cite[Proposition 6-(ii), Remark 6]{Ka4}.
\begin{align} \label{rGc0}
\left(\prod_{c \in \phi_H^{-1}(\sigma)}\frac{\exp(X(c))}{\exp_p(X_p(c))} \right)^{h_H}
\equiv \alpha^\sigma \mod \ker \log_p \quad (\sigma \in \mathrm{Gal}(H/F)).
\end{align}
In this case, we can show that $\prod_{c \in \phi_H^{-1}(\sigma)}\exp(X(c))\in \pi_\mathfrak p^r\cdot E_+^\mathbb Q$ with $r \in \mathbb Q$ as follows:
let $t:=\mathrm{ord}_\mathfrak p \mathfrak f$.
For simplicity, assume that $\mathfrak p^t \mid \mathfrak a_{c}$ for each $c \in C_{\mathfrak f}$.
Then, by using Proposition \ref{575859} $t$-times, we obtain
\begin{align*}
&\prod_{c \in \phi_H^{-1}(\sigma)}\exp(X(c;D,\mathfrak a_c)) \\
&=
\prod_{\overline c \in \phi_{H,0}^{-1}(\sigma)}\frac{\exp(X(\overline c;D,\mathfrak p^t\mathfrak a_c))}
{\exp(X([\mathfrak p]^{-1}\overline c;D,\mathfrak p^{t-1}\mathfrak a_c))}
\pi_\mathfrak p^{\frac{\sum_{\overline c \in \phi_{H,0}^{-1}(\sigma)}\zeta(0,[\mathfrak p]^{-1}\overline c)}{h_F^+}}.
\end{align*}
Here, in the right-hand side, we take $\mathfrak a_c$ associated with $c \in C_\mathfrak f$
whose image under the natural projection $C_\mathfrak f \ra C_{\mathfrak f_0}$ is equal to $\overline c$.
Moreover \cite[Lemma 1]{Ka4} implies that 
\begin{align*}
\prod_{c \in \phi_{H,0}^{-1}(\sigma)}\frac{\exp(X(c;D,\mathfrak a_c))}
{\exp(X([\mathfrak p]^{-1}c;D,\mathfrak p^{\mathrm{ord}_\mathfrak p \mathfrak f-1}\mathfrak a_c))}\in E_+^\mathbb Q.
\end{align*}
Hence we see that 
\begin{align*}
\prod_{c \in \phi_H^{-1}(\sigma)}\exp(X(c)) \equiv 
\pi_\mathfrak p^{\frac{\sum_{\overline c \in \phi_{H,0}^{-1}(\sigma)}\zeta(0,[\mathfrak p]^{-1}\overline c)}{h_F^+}} \mod E_+^\mathbb Q.
\end{align*}
In particular $\left(\prod_{c \in \phi_H^{-1}(\sigma)}\frac{\exp(X(c))}{\exp_p(X_p(c))} \right)^{h_H} \in \overline{\mathbb Q_p}^\times/\mu_\infty$ is well-defined. 
Furthermore, we reduce the ambiguity of (\ref{rGc0}):
since $\mathfrak p$ splits completely in $H/F$, 
we have $(\frac{H/F}{\mathfrak p})=\mathrm{id}_H$, $\frac{\mathrm{ord}_p \pi_\mathfrak p}{h_F^+}=\frac{\mathrm{ord}_p \alpha_H}{h_H}$. 
Therefore 
\begin{align*}
\mathrm{ord}_p(\alpha^\sigma)=\mathrm{ord}_p (\alpha_H)\sum_{c \in \phi_{H,0}^{-1}(\sigma)}\zeta(0,c)
\end{align*}
is equal to 
\begin{align*}
\mathrm{ord}_p\left(\prod_{c \in \phi_H^{-1}(\sigma)}\frac{\exp(X(c))}{\exp_p(X_p(c))}\right)^{h_H}
&=\frac{h_H\mathrm{ord}_p \pi_\mathfrak p}{h_F^+}\sum_{c \in \phi_{H,0}^{-1}(\sigma)}\zeta(0,[\mathfrak p]^{-1}c) \\
&=\frac{h_H\mathrm{ord}_p \pi_\mathfrak p}{h_F^+}\sum_{c \in \phi_{H,0}^{-1}(\sigma)}\zeta(0,c).
\end{align*}
Therefore (\ref{rGc0}) is equivalent to 
\begin{align} \label{rGc}
\left(\prod_{c \in \phi_H^{-1}(\sigma)}\frac{\exp(X(c))}{\exp_p(X_p(c))} \right)^{h_H} 
\equiv \alpha^\sigma \mod \mu_\infty \quad (\sigma \in \mathrm{Gal}(H/F))
\end{align}
since $\ker \log_p$ is generated by all rational powers of $p$ and $\mu_\infty$.

\section{Shimura's period symbol} 

In this section, we recall the definition and some properties of Shimura's period symbol $p_K$.
For later use, we give an explicit relation (Proposition \ref{pphi}) between $p_K$ and periods of motives associated with algebraic Hecke characters.

\subsection{Definition by abelian varieties with CM} 

For a number field $K$, we denote by $I_K$ (resp.\ $I_{K,\mathbb Q}$) the free $\mathbb Z$-module (the $\mathbb Q$-vector space)
formally generated by all complex embeddings of $K$:
\begin{align*}
I_K:=\bigoplus_{\iota \in \HKC} \mathbb Z \cdot \iota,\quad  
I_{K,\mathbb Q}:=\bigoplus_{\iota \in \HKC} \mathbb Q \cdot \iota.
\end{align*}
We regard each subset $S$ of $\HKC$ as an element in $I_K$ or $I_{K,\mathbb Q}$ by $S \leftrightarrow \sum_{\iota \in S} \iota$.
Since we fixed $\mathrm{id}\colon K \hookrightarrow \mathbb C$, 
we also regard $\mathrm{Aut}(K) \subset \HKC$ by $\sigma \leftrightarrow \mathrm{id}\circ \sigma$.
We denote the complex conjugation on $\mathbb C$ by $\rho$.
For $\Xi =\sum_{\iota \in \HKC} r_\iota \cdot \iota \in I_K$ or $\in I_{K,\mathbb Q}$, 
we put $\rho\circ \Xi:=\sum_{\iota \in \HKC} r_\iota (\rho\circ \iota)$.

\begin{dfn} \label{sps}
Let $K$ be a CM-field.
We define a $\mathbb Z$-bilinear map
\begin{align*}
p_K \colon I_K \times I_K \ra \mathbb C^\times/\oq^\times
\end{align*}
as follows.
\begin{enumerate}
\item We say $\Xi \subset \HKC$ is a CM-type of $K$ if $\Xi+\rho\circ \Xi=\HKC$.
For each CM-type $\Xi $ of $K$, 
we take an abelian variety $A_\Xi$ defined over $\oq$ with CM of type $(K,\Xi)$ and put
\begin{align} \label{period0}
p_K(\iota,\Xi):=
\begin{cases}
\pi^{-1} \int_{\gamma} \omega_{\iota} & (\iota \in \Xi), \\
\pi\left(\int_{\gamma} \omega_{\rho\circ\iota}\right)^{-1} & (\iota \in \rho \circ \Xi).
\end{cases}
\end{align}
Here $\omega_{\iota}$ denotes a non-zero holomorphic differential form of $A_\Xi$ defined over $\oq$ satisfying
\begin{align} \label{df}
k^*\omega_{\iota}=\iota(k)\omega_{\iota} \quad (k \in K)
\end{align}
concerning the action $k^*$ of $k$ through $K \cong \mathrm{End}(A)\otimes_\mathbb Z \mathbb Q$.
We take a closed path $\gamma$ of $A_\Xi(\mathbb C)$ which satisfies $\int_{\gamma} \omega_{\iota}\neq 0$.
\item Assume that $\Xi \in I_K$ satisfies $\Xi+\rho \circ \Xi=w \HKC$ with $w \in \mathbb Z$.
Then we take CM types $\Xi_j$ and integers $n_j$ satisfying $\Xi=\sum_{j=1}^l n_j\Xi_j$ and put
\begin{align*}
p_K(\iota,\Xi):=\prod_{j=1}^l p_K(\iota,\Xi_j)^{n_j} \quad (\iota \in \HKC),
\end{align*}
where $p_K(\iota,\Xi_j)$ is defined in {\rm (i)}.
\item For general $\Xi \in I_K$, we put
\begin{align} \label{nlz}
p_K(\iota,\Xi):=p_K(\iota,\Xi-\rho\circ \Xi)^{\frac{1}{2}}\quad (\iota \in \HKC),
\end{align}
where $p_K(\iota,\Xi-\rho\circ \Xi)$ is defined in {\rm (ii)}.
\item We extend $p_K(\iota,\Xi)$ of {\rm (iii)} linearly on the left component.
\end{enumerate}
\end{dfn}
In particular, by (\ref{nlz}), the symbol $p_K$ was ``normalized'' so that it satisfies 
\begin{align} \label{mr}
p_K(\iota,\Xi) p_K(\iota,\rho\circ \Xi) \equiv 1 \mod \overline{\mathbb Q}^\times.
\end{align}

By \cite[Theorems 32.2, 32.4, 32.5]{Shim}, 
we see that $p_K \colon I_K \times I_K \ra \mathbb C^\times/\oq^\times$
does not depend on the choices of $\Xi_j$, $n_j$, $A_\Xi$, $\gamma$, $\omega_{\iota}$.
We extend this linearly to
\begin{align*}
p_K \colon I_{K,\mathbb Q} \times I_{K,\mathbb Q} \ra \mathbb C^\times/\oq^\times.
\end{align*}
We consider the Lefschetz motive $\mathbb Q(-1)$.
Since a polarization of $A_\Xi$ induces a correspondence
\begin{align} \label{corresp}
H^1(A_\Xi) \times H^1(A_\Xi) \ra \mathbb Q(-1),
\end{align}
we have 
\begin{align*}
\int_{\gamma} \omega_{\iota} \int_{\gamma'} \omega_{\rho\circ\iota}\equiv 2\pi i \mod \oq^\times.
\end{align*}
Here, when $\iota \in \rho \circ \Xi$, we take a non-zero differential form $\omega_{\iota}$ of the second kind which is also characterized by (\ref{df}). 
For a proof, see \cite[Chap.\ 2, (1.5.4)]{Sc}.
Hence we may replace the definition (\ref{period0}) of $p_K(\iota,\Xi)$ with 
\begin{align} \label{period}
p_K(\iota,\Xi):=
\begin{cases}
(2\pi i)^{-1} \int_{\gamma} \omega_{\iota} & (\iota \in \Xi), \\
\int_{\gamma} \omega_{\iota} & (\iota \in \rho \circ \Xi).
\end{cases}
\end{align}

\subsection{Motives associated with algebraic Hecke characters} \label{Sps2}

We rewrite the definition of $p_K$, in terms of motives associated with algebraic Hecke characters.
The relation between periods of abelian variety with CM and those of such motives seems to be 
well-known for experts {\rm(\cite{Bl1}, \cite[Chap.\ 4, \S 1]{Sc})}.
Let $k$ be a number field, $\chi$ an algebraic Hecke character of $k$, 
$K$ a number field satisfying $\chi(\mathfrak a) \in K$ for any integral ideal $\mathfrak a$ of $k$ relatively prime to the conductor of $\chi$.
We say that a motive $M$ defined over $k$ with coefficients in $K$ is associated with $\chi$ if $\mathrm{rank}_K M=1$ and it satisfies 
\begin{align*}
L(M,s)=(L(s,\iota \circ \chi))_{\iota \in \HKC}.
\end{align*}
We denote such a motive by $M(\chi)$.
We consider the de Rham isomorphism between the Betti realization (associated with $\mathrm{id} \in \mathrm{Hom}(k,\mathbb C)$) and the de Rham realization:
\begin{align*}
I\colon H_\mathrm{B}(M(\chi)) \otimes_\mathbb Q\mathbb C \st{\cong}\ra H_\mathrm{dR}(M(\chi)) \otimes_k \mathbb C.
\end{align*}
We take the basis $c_b,c_{dr}$ of each space:
\begin{align*}
H_\mathrm{B}(M(\chi)) &=K c_b, \\
H_\mathrm{dR}(M(\chi)) &=(K\otimes k) c_{dr}.
\end{align*}
Then we define $\mathrm{Per}(\chi) \in (K\otimes_\mathbb Q \mathbb C)^\times$ by 
\begin{align*}
\mathrm{Per}(\chi)I(c_b\otimes 1)&:=c_{dr}\otimes 1.
\end{align*}
Let $\mathrm{Per}(\iota,\chi)$ be its $\iota \in \HKC$ component:
\begin{align*}
\mathrm{Per}(\chi)&=:(\mathrm{Per}(\iota,\chi))_{\iota \in \HKC} \quad \text{via }  
(K \otimes_\mathbb Q \mathbb C)^\times=\prod_{\iota \in \HKC}\mathbb C^\times.
\end{align*}
Since we have $M(\chi\chi')\cong M(\chi)\otimes_KM(\chi')$,
the symbol $\mathrm{Per}(\iota,\chi)$ also satisfies the ``linearity property'' {\rm(\cite[Chap.\ 2, (1.8.3)]{Sc})}:
\begin{align*}
\mathrm{Per}(\iota, \chi\chi') \equiv \mathrm{Per}(\iota, \chi)\mathrm{Per}(\iota, \chi') \mod \oq^\times.
\end{align*}
In particular, we see that $\mathrm{Per}(\iota,\chi) \bmod \oq^\times$ depends only on $\iota$ and the infinity type of $\chi$.
since we have
\begin{align} \label{PofA}
\mathrm{Per}(\iota,\chi) \in (K \otimes_\mathbb Q k^{\mathrm{ab}})^\times
\end{align}
for any Artin character $\chi$ by \cite[Chap.\ 2, (3.2.4), (3.2.3)]{Sc}, where $k^{\mathrm{ab}}$ denotes the maximal abelian extension of $k$.

Let $L$ be a number field satisfying that $L/\mathbb Q$ is normal and $K \subset L$.
For $\Xi \in I_K$, we consider the inflation $(L,\Xi^*_L)$ of the reflex type $(K^*,\Xi^*)$ of $(K,\Xi)$,
which provides the linear map  
\begin{align*}
I_K \ra I_L, \quad \Xi=\sum_{\iota \in \HKC} n_\iota \cdot \iota \mt \Xi^*_L=\sum_{\iota \in \mathrm{Hom}(L,\mathbb C)} n_{\iota|_K} \cdot \iota^{-1}.
\end{align*}
Here $\iota^{-1} \in \mathrm{Hom}(L,\mathbb C)$ is well-defined since $L/\mathbb Q$ is normal.
Let $A$ be an abelian variety defined over a number field $k$ with CM of type $(K,\Xi)$.
We consider the associated algebraic Hecke character $\chi_A$ of $k$.
We may assume that $k/\mathbb Q$ is normal and $K \subset k$. 
Then the infinity type of $\chi_A$ is equal to $\Xi_k^*$ 
by \cite[Theorem 19.8]{Shim}.
Moreover {\rm \cite[Chap.\ 2, (1.5.2)]{Sc}} states that 
\begin{align} \label{period2}
\int_\gamma \omega_\iota  \equiv \mathrm{Per}(\iota,\chi_A) \mod \oq^\times.
\end{align}

Let $\iota,\iota' \in \HKC$.
We take $\chi$ whose infinity type is equal to  
\begin{align} \label{itype}
(w \cdot \iota'- w \cdot \rho\circ \iota')^*_k
=w\sum_{\tilde{\iota'}\in \mathrm{Hom}(k,\mathbb C),\ \tilde{\iota'}|_K=\iota'}(\tilde{\iota'}^{-1}-\rho\circ \tilde{\iota'}^{-1})
\quad (w \in \mathbb N).
\end{align}
If we write $\iota'-\rho\circ \iota'=\sum_{j=1}^l n_j \Xi_j$ with integers $n_j$, CM-types $\Xi_j$, then we see that 
\begin{align} \label{sum}
\sum_{1\leq j \leq l,\ \iota \in \Xi_j} n_j=
\begin{cases}
1 & (\iota=\iota'), \\
-1 & (\iota=\rho\circ \iota'), \\
0 & (\text{otherwise}).
\end{cases}
\end{align}
Therefore we have by (\ref{period}), (\ref{period2})
\begin{align} \label{ptoP} 
p_K(\iota,\iota')\equiv
\begin{cases}
(2 \pi i)^{-\frac{1}{2}}\mathrm{Per}(\iota,\chi)^{\frac{1}{2w}}  & (\iota=\iota') \\
(2 \pi i)^{\frac{1}{2}}\mathrm{Per}(\iota,\chi)^{\frac{1}{2w}} & (\iota=\rho\circ \iota') \\
\mathrm{Per}(\iota,\chi)^{\frac{1}{2w}} & (\text{otherwise}) \\
\end{cases} \mod \oq^\times.
\end{align}
The above argument provides an alternative proof of well-definedness of $p_K$ modulo $\oq^\times$.
Moreover we can reduce the ambiguity:
let $\chi_{w\iota'}$ be an algebraic Hecke character of $\iota'(K)$ whose infinity type is equal to $w(\iota'^{-1}-\rho\circ \iota'^{-1})$ with $w \in \mathbb N$.
By taking a large enough $w$, we may assume that $\chi_{w\iota'}(\mathfrak a) \in K$.
Then the field of definition and the field of coefficients of $M(\chi_{w\iota'})$ are $\iota'(K)$ and $K$ respectively.
When $\iota'(K) \subset k$, the algebraic Hecke character $\chi_{w\iota'} \circ N_{k/\iota'(K)}$ of $k$ satisfies (\ref{itype}).
Moreover we have
\begin{align} \label{PtoP}
\mathrm{Per}(\iota,\chi_{w\iota'} \circ N_{k/\iota'(K)}) \equiv \mathrm{Per}(\iota,\chi_{w\iota'}) \mod \oq^\times
\end{align}
since $M(\chi_{w\iota'} \circ N_{k/\iota'(K)})=M(\chi_{w\iota'})\times_{\iota'(K)} k$.
The following proposition follows from (\ref{PofA}), (\ref{ptoP}), (\ref{PtoP}) and the linearity of symbols $p_K,\mathrm{Per}$.
\begin{prp} \label{pphi}
Let $\Xi:=\sum_{\iota \in \mathrm{Aut}(K)} r_\iota \cdot \iota \in I_{K,\mathbb Q}$ $(r_\iota \in \mathbb Q)$.
We note that $r_\iota=0$ when $\iota(K)\neq K$.
We take an integer $w$ and an algebraic Hecke character $\chi_{w\Xi}$ of $K$ satisfying that 
\begin{itemize}
\item The infinity type of $\chi_{w\Xi}$ is $\sum_{\iota \in \mathrm{Aut}(K)} w r_\iota (\iota^{-1}-\rho \circ \iota^{-1}) \in I_K$.
\item $\chi_{w\Xi}(\mathfrak a) \in K$ for $\mathfrak a$ relatively prime to the conductor of $\chi_{w\Xi}$.
\end{itemize}
We define $\mathrm{Per}(\chi_{w\Xi})=(\mathrm{Per}(\iota,\chi_{w\Xi}))_{\iota \in \HKC} \in (K\otimes_\mathbb Q \mathbb C)^\times$ by
\begin{align*}
\mathrm{Per}(\chi_{w\Xi}) I(c_b \otimes 1)=c_{dr} \otimes 1
\end{align*}
for a $K$-basis $c_b$ of $H_\mathrm{B}(M(\chi_{w\Xi}))$, a $K\otimes_\mathbb Q K$-basis $c_{dr}$ of $H_\mathrm{dR}(M(\chi_{w\Xi}))$.
\begin{enumerate}
\item For $\iota \in \mathrm{Aut}(K)$, 
\begin{align*}
\mathrm{Per}(\iota,\chi_{w\Xi})^{\frac{1}{2w}} \bmod ({K^{\mathrm{ab}}}^\times)^\mathbb Q
\end{align*}
depends only on $\iota,\Xi$.
Here $K^{\mathrm{ab}}$ denotes the maximal abelian extension of $K$ and we put 
$({K^{\mathrm{ab}}}^\times)^\mathbb Q :=\{\alpha \in \oq^\times \mid \exists n \in \mathbb N\text{ s.t.\ } \alpha^n \in K^{\mathrm{ab}}\}$.
\item We have
\begin{align*} 
p_K(\iota,\Xi)\equiv
(2 \pi i)^{\frac{r_{\rho\circ \iota}-r_{\iota}}{2}}\mathrm{Per}(\iota,\chi_{w\Xi})^{\frac{1}{2w}} \mod \oq^\times \quad (\iota \in \HKC).
\end{align*}
\end{enumerate}
\end{prp}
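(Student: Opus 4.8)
The statement is essentially a bookkeeping result: it collects the relations between Shimura's period symbol $p_K$ and the motivic periods $\mathrm{Per}(\iota,\chi)$ that were established in the preceding discussion, and repackages them for a general $\mathbb Q$-linear combination $\Xi=\sum_{\iota \in \mathrm{Aut}(K)} r_\iota\cdot\iota$. I would therefore not prove anything fundamentally new but rather verify that the earlier equivalences (\ref{ptoP}), (\ref{PofA}), (\ref{PtoP}), together with the linearity of both symbols, assemble into the two claimed assertions.

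\emph{Step 1 (reduction to single embeddings).} First I would record that both sides of (ii) are multiplicative/linear in $\Xi$: the left side by Definition \ref{sps}-(iv), and the right side because $M(\chi\chi')\cong M(\chi)\otimes_K M(\chi')$ gives $\mathrm{Per}(\iota,\chi\chi')\equiv \mathrm{Per}(\iota,\chi)\mathrm{Per}(\iota,\chi')$ mod $\oq^\times$, while the factor $(2\pi i)^{(r_{\rho\circ\iota}-r_\iota)/2}$ is manifestly additive in the coefficients. Hence it suffices to treat $\Xi=\iota'$ for a single $\iota'\in\mathrm{Aut}(K)$ (with $w=1$ absorbed into a later power), i.e.\ to reprove the case already contained in (\ref{ptoP}). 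For that case, choose an abelian variety $A$ with CM of type $(K,\Xi_j)$ over a normal field $k\supset \iota'(K)$; combining (\ref{period}) with (\ref{period2}) gives $p_K(\iota,\iota')\equiv (2\pi i)^{e_\iota/2}\,\mathrm{Per}(\iota,\chi)^{1/(2w)}$ mod $\oq^\times$, where $e_\iota\in\{-1,0,1\}$ is exactly the quantity computed in (\ref{sum}); one checks $e_\iota = r_{\rho\circ\iota}-r_\iota$ with $r=\iota'$. This is (ii) in the base case, and linearity propagates it.

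\emph{Step 2 (descent of the Hecke character to $K$).} The subtlety is that in the abelian-variety construction $\chi$ naturally lives on a large field $k$, whereas the proposition wants $\chi_{w\Xi}$ defined on $K$ itself. Here I would invoke exactly the argument already sketched before the statement: take $\chi_{w\iota'}$ on $\iota'(K)$ with infinity type $w(\iota'^{-1}-\rho\circ\iota'^{-1})$, enlarge $w$ so that $\chi_{w\iota'}(\mathfrak a)\in K$, and use $M(\chi_{w\iota'}\circ N_{k/\iota'(K)}) = M(\chi_{w\iota'})\times_{\iota'(K)} k$ to get $\mathrm{Per}(\iota,\chi_{w\iota'}\circ N_{k/\iota'(K)})\equiv \mathrm{Per}(\iota,\chi_{w\iota'})$ mod $\oq^\times$, which is (\ref{PtoP}). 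Since $r_\iota=0$ whenever $\iota(K)\neq K$, summing over $\iota\in\mathrm{Aut}(K)$ assembles the individual $\chi_{w\iota'}$'s into one character $\chi_{w\Xi}$ of $K$ with the prescribed infinity type; this gives (ii) in general. I expect this descent/normalization step — keeping track of which field each character and each $\mathrm{Per}$ lives over, and that the ambiguity genuinely collapses — to be the main point requiring care, though it is entirely routine given the cited lemmas of Schappacher.

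\emph{Step 3 (the refined ambiguity in (i)).} For part (i) I would argue that, once $\chi_{w\Xi}$ is fixed to have coefficients in $K$ and to be defined over $K$, the class of $\mathrm{Per}(\iota,\chi_{w\Xi})\bmod\oq^\times$ depends only on $\iota$ and the infinity type, hence only on $\iota$ and $\Xi$ — this is the remark following the linearity property, together with (\ref{PofA}), which says that for Artin characters $\mathrm{Per}(\iota,\chi)\in (K\otimes_\mathbb Q K^{\mathrm{ab}})^\times$. The only remaining freedom is the choice of bases $c_b,c_{dr}$ (scaling by $K^\times$) and the choice of $\chi_{w\Xi}$ among characters with the same infinity type (differing by an Artin character, i.e.\ by a factor in $K^{\mathrm{ab}}$ after applying (\ref{PofA})); after extracting the $2w$-th root these contribute precisely an element of $({K^{\mathrm{ab}}}^\times)^{\mathbb Q}$, giving the asserted well-definedness modulo $({K^{\mathrm{ab}}}^\times)^{\mathbb Q}$. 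Combining Steps 1–3 yields both statements.
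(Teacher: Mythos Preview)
Your proposal is correct and follows the same approach as the paper, which simply records that the proposition ``follows from (\ref{PofA}), (\ref{ptoP}), (\ref{PtoP}) and the linearity of symbols $p_K,\mathrm{Per}$.'' You have just spelled out what each of these ingredients contributes: linearity reduces (ii) to the single-embedding case (\ref{ptoP}), the base-change relation (\ref{PtoP}) lets you work with a character on $K$ rather than on a large auxiliary $k$, and (\ref{PofA}) controls the Artin-character ambiguity to give (i).
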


\section{Yoshida's conjecture on the transcendental part}

Let $F$ be a totally real field, $C_\mathfrak f$ the narrow ray class group modulo $\mathfrak f$.
We assume that $(F,\mathfrak f)\neq (\mathbb Q,(1))$, or equivalently, 
$\mathrm{ord}_{s=0}\sum_{c \in C_\mathfrak f}\zeta(s,c)=|\{\text{infinite places of $F$}\}\cup \{\text{prime ideals dividing $\mathfrak f$}\}|-1 \geq 1$.
We denote by $s_\iota \in C_\mathfrak f$ the ``complex conjugation'' at $\iota \in \HFR$:
namely we take an element $\nu_\iota \in \mathcal O$ satisfying
\begin{align*}
\nu_\iota \equiv 1 \bmod \mathfrak f,\ \iota(\nu_\iota)<0,\ \iota'(\nu_\iota)>0\ (\iota\neq \iota' \in \HFR),
\end{align*}
and put 
\begin{align*}
s_\iota:=[(\nu_\iota)] \in C_\mathfrak f.
\end{align*}
Strictly speaking, $s_\iota$ is the ideal class corresponding to the complex conjugation at $\iota \in \HFR$ 
via the Artin map (for a proof, see \cite[Chap.\ III, the first paragraph of \S 5.1]{Yo}).
We consider two subgroups $\langle s_\iota \rangle, \langle  s_\iota s_{\iota'} \rangle$ of 
$C_\mathfrak f$ generated by all complex conjugations $s_\iota$ ($\iota \in \HFR)$, 
all products $s_\iota s_{\iota'}$ of all pairs of complex conjugations ($\iota, \iota'  \in \HFR)$, respectively.

Yoshida formulated a conjecture \cite[Chap.\ III, Conjecture 3.9]{Yo} 
which expresses any value of Shimura's period symbol $p_K$ in terms of $\exp(X(c))$'s.
The following is its slight generalization.

\begin{cnj}[{\cite[Conjecture 5.5]{Ka3}}] \label{Yc}
If $\langle s_\iota \rangle \supsetneq \langle  s_\iota s_{\iota'} \rangle$, then we have for $c \in C_\mathfrak f$
\begin{align} \label{cnj1}
\exp(X(c))
\equiv 
\pi^{\zeta(0,c)}p_{H_{\mathrm{CM}}}(\mathrm{Art}(c),\sum_{c' \in C_{\mathfrak f}} \tfrac{\zeta(0,c')}{[H_\mathfrak f:H_{\mathrm{CM}}]} \mathrm{Art}(c'))
\mod \overline{\mathbb Q}^\times.
\end{align}
Here $H_{\mathrm{CM}}$ denotes the maximal CM-subfield of the narrow ray class field $H_{\mathfrak f}$ modulo $\mathfrak f$,
$\mathrm{Art} \colon C_\mathfrak f \ra \mathrm{Gal}(H_{\mathrm{CM}}/F)$ denotes the Artin map.
Otherwise we have
\begin{align} \label{cnj2}
\exp(X(c)) \in \overline{\mathbb Q}^\times.
\end{align}
\end{cnj}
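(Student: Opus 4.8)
\medskip
\noindent\emph{Sketch of a proof strategy.} Since this is a conjecture (a theorem only for $F=\mathbb Q$), what follows is a plan of attack. The organizing idea is that both sides of (\ref{cnj1}) are periods: the right-hand side by definition, and the left-hand side because $X(c)$ is assembled from derivatives at $s=0$ of Barnes multiple zeta functions, which by Yoshida's multiple Kronecker limit formula are, up to algebraic factors and rearrangement, the derivatives $L'(0,\psi)$ of the partial Hecke $L$-functions of $F$ attached to the characters $\psi$ of $C_\mathfrak f$. Concretely, one first notes that among the three constituents $X(c)=G(c)+V(c)+W(c)$ the term $\exp(W(c;D,\mathfrak a_c))$ is a rational power of the algebraic number $\pi_{\mathfrak a_c\mathfrak f}$, hence negligible modulo $\overline{\mathbb Q}^\times$, and that by (\ref{indep1}) one is free to compute with any convenient Shintani domain and ideals; one then expands $c\mapsto\exp(X(c))$ against the characters of $C_\mathfrak f$, reducing (\ref{cnj1}) to an identity, for each $\psi$, between the transcendental part of $L'(0,\psi)$ and a CM period.

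The core of the argument is exactly this last identification. The Lerch-type formula --- for $F=\mathbb Q$ the classical $\zeta'(0,a/f)=\log\Gamma(a/f)-\tfrac12\log2\pi$, in general Yoshida's limit formula expressing $L'(0,\psi)$ through multiple gamma and unit-logarithm terms --- should equate $L'(0,\psi)$, up to $\overline{\mathbb Q}^\times$ and a power of $\pi$, with a product of the CM periods $\mathrm{Per}(\iota,\chi_\psi)$, where $\chi_\psi$ is an algebraic Hecke character of $H_{\mathrm{CM}}$ whose infinity type is forced by the rational numbers $\zeta(0,c')$. Feeding this back through character orthogonality and Proposition \ref{pphi} (the dictionary $\mathrm{Per}\leftrightarrow p_{H_{\mathrm{CM}}}$) must recombine the periods into precisely $p_{H_{\mathrm{CM}}}(\mathrm{Art}(c),\sum_{c'}\tfrac{\zeta(0,c')}{[H_\mathfrak f:H_{\mathrm{CM}}]}\mathrm{Art}(c'))$, the surviving $\pi^{\zeta(0,c)}$ accounting for the $(2\pi i)$-powers introduced in (\ref{period}) and (\ref{ptoP}). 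The hypothesis $\langle s_\iota\rangle\supsetneq\langle s_\iota s_{\iota'}\rangle$ is what guarantees that $H_{\mathrm{CM}}$ is a genuine CM field and that the divisor above is not $\rho$-balanced, so that the reflex CM type --- hence a nontrivial value of $p_{H_{\mathrm{CM}}}$ --- is available. In the complementary case all the relevant CM types degenerate: after the normalization $\Xi\mapsto\Xi-\rho\circ\Xi$ of (\ref{nlz}) the divisor $\sum_{c'}\zeta(0,c')\mathrm{Art}(c')$ lands in the kernel of $p_{H_{\mathrm{CM}}}$, the same dictionary then produces no genuine period, and, using the $S$-unit vanishing $\sum_{c'}\zeta(0,c')=0$ to control the power of $\pi$, one obtains $\exp(X(c))\in\overline{\mathbb Q}^\times$.

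The genuinely hard step is the archimedean identification of $L'(0,\psi)$, i.e.\ of the multiple-gamma-and-unit product, with the period of $M(\chi_\psi)$. For $F=\mathbb Q$ this input is available in full: the $\chi_\psi$ are Dirichlet characters, $M(\chi_\psi)$ is a Gamma-motive sitting inside a Fermat Jacobian, and Deligne's theorem on the periods of Fermat curves, together with the Chowla--Selberg and Gross--Koblitz formulas, identifies $\prod_a\Gamma(a/f)^{\psi(a)}$ with the required CM period up to algebraic factors; this is the ``well-known facts on periods of Fermat curves'' of the introduction and it proves Conjecture \ref{Yc} for $F=\mathbb Q$. For a general totally real $F$ no such complete input is known: the statement becomes essentially a case of the Deligne--Beilinson period conjecture for Hecke $L$-values of CM type, and one would have to reduce, via the norm and inflation relations of Proposition \ref{575859} together with the abelian-over-$\mathbb Q$ case, to products of Fermat-type motives, or else construct the relevant CM abelian variety and invoke Shimura's reciprocity law for its periods. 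It is precisely this transcendence input that is missing in general --- which is why Conjecture \ref{Yc} is assumed, rather than proved, in what follows.
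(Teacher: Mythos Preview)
Your sketch correctly frames this as an open conjecture for general $F$ and identifies Fermat-curve periods as the decisive input when $F=\mathbb Q$. Two points deserve correction.

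First, your argument for the degenerate case (\ref{cnj2}) is circular. When $\langle s_\iota\rangle=\langle s_\iota s_{\iota'}\rangle$ there is no CM subfield of $H_\mathfrak f$ at all (Proposition~\ref{cm}(i)), so the symbol $p_{H_{\mathrm{CM}}}$ is undefined and there is no ``kernel'' to land in. More importantly, even granting that the would-be right-hand side degenerates to $1$, concluding that $\exp(X(c))$ is algebraic presupposes the very identification you are trying to establish. The paper makes no such claim: Proposition~\ref{cm}(ii) shows only that $\zeta(0,c)=0$ for every $c$ in this case (a stronger fact than your $\sum_{c'}\zeta(0,c')=0$), so that (\ref{cnj2}) is the \emph{consistent} continuation of (\ref{cnj1}), not a consequence of it. The algebraicity statement (\ref{cnj2}) remains genuinely conjectural for $[F:\mathbb Q]>1$.

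Second, for $F=\mathbb Q$ your route through character orthogonality and $L'(0,\psi)$ differs from the paper's. The argument there (following Yoshida) is direct: one has the explicit Hurwitz evaluation $\exp(X(c_{r/m}))=\Gamma(\tfrac{r}{m})\,m^{r/m-1/2}(2\pi)^{-1/2}$ from (\ref{ex}), and Rohrlich's formula (\ref{rh}) identifies the Beta value $\Gamma(\tfrac{r}{m})\Gamma(\tfrac{s}{m})/\Gamma(\tfrac{r+s}{m})$ with a period of the Fermat Jacobian modulo $\mathbb Q(\zeta_m)^\times$; matching this against the CM-type $\Xi_{r/m,s/m}$ yields (\ref{cnj1}) without any Fourier inversion. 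Your invocation of Gross--Koblitz is out of place --- that is a $p$-adic statement about Gauss sums and Morita's $\Gamma_p$, irrelevant to the archimedean Conjecture~\ref{Yc}. Finally, the passage from $X(c)$ to $L'(0,\psi)$ is less immediate than you suggest: Shintani's formula gives $\zeta'(0,c)=\sum_{\iota\in\HFR}X(\iota(c);\iota(D),\iota(\mathfrak a_c))$, a sum over \emph{all} real embeddings, so extracting a single $\exp(X(c))$ from $L$-values requires the averaging argument of \S7 (Theorem~\ref{partial}), which succeeds only under the abelian-over-$\mathbb Q$ hypothesis.
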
 

In order to explain the difference between \cite[Conjecture 5.5]{Ka3} and the above one, we prove the following proposition:
Proposition \ref{cm}-(i) implies that \cite[Conjecture 5.5]{Ka3} is equivalent to (\ref{cnj1}).
By Proposition \ref{cm}-(ii), we see that (\ref{cnj2}) is a natural complement of (\ref{cnj1}).

\begin{prp} \label{cm}
Let $H_\mathfrak f/F$, $s_\iota \in C_\mathfrak f$ $(\iota \in \HFR)$ be as above.
\begin{enumerate}
\item The following are equivalent: 
\begin{center}
$\langle s_\iota \rangle \supsetneq \langle s_\iota s_{\iota'} \rangle$ $\LR$ there exists a CM-subfield of $H_\mathfrak f$.
\end{center}
Furthermore, when $\langle s_\iota \rangle \supsetneq \langle  s_\iota s_{\iota'} \rangle$, 
the maximal CM-subfield $H_{\mathrm{CM}}$ corresponds to $\langle s_\iota s_{\iota'} \rangle$
via the class field theory.
\item When $\langle s_\iota \rangle = \langle  s_\iota s_{\iota'} \rangle$, we have $\zeta(0,c)=0$ for any $c \in C_\mathfrak f$.
\end{enumerate}
\end{prp}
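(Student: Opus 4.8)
The plan is to reduce everything to the class field theory dictionary for the abelian extension $H_\mathfrak f/F$, together with two elementary inputs: first, since $H_\mathfrak f/F$ is abelian, the decomposition group at an archimedean place $\iota\in\HFR$ is a well-defined subgroup of $\mathrm{Gal}(H_\mathfrak f/F)$ independent of the place chosen above $\iota$, and it is generated by the complex conjugation $c_\iota$, which by the cited fact equals $\mathrm{Art}(s_\iota)$; second, $s_\iota^2=1$ in $C_\mathfrak f$, because $\nu_\iota^2$ is totally positive and $\equiv 1\bmod\mathfrak f$. Write $L^+$ and $L$ for the subfields of $H_\mathfrak f$ fixed by $\langle s_\iota\rangle$ and by $\langle s_\iota s_{\iota'}\rangle$ respectively, so $L^+\subseteq L\subseteq H_\mathfrak f$. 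A subfield is totally real iff every $c_\iota$ fixes it iff it lies in $L^+$; hence $L^+$ is the maximal totally real subfield of $H_\mathfrak f$. Similarly, a subfield $M$ satisfies $M\subseteq L$ iff every product $c_\iota c_{\iota'}$ fixes $M$, i.e.\ iff all the $c_\iota$ restrict to one and the same element of $\mathrm{Gal}(M/F)$ (using that each $c_\iota$ has order dividing $2$). Finally, the quotient $\langle s_\iota\rangle/\langle s_\iota s_{\iota'}\rangle$ is cyclic, generated by the common image $\tau$ of all the $s_\iota$, with $\tau^2=1$; so its order is $1$ or $2$, and it is $1$ exactly when $\langle s_\iota\rangle=\langle s_\iota s_{\iota'}\rangle$.

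For part (i): if that quotient is trivial then $L=L^+$ is totally real, hence so is every subfield of $L$. Now any CM-subfield $M\subseteq H_\mathfrak f$ has all of its complex conjugations equal: each $c_\iota$ fixes the maximal totally real subfield $M^+$ and is nontrivial (the place of $M$ above $\iota$ is complex, as $M$ is totally imaginary), hence equals the unique nontrivial element of $\mathrm{Gal}(M/M^+)$; by the criterion above this gives $M\subseteq L=L^+$, a contradiction, so no CM-subfield exists. If instead the quotient has order $2$, then $[L:L^+]=2$, and on $L$ every $c_\iota$ acts as the common nontrivial element $\tau$, so every archimedean place of $L$ is complex, i.e.\ $L$ is totally imaginary; together with $L^+$ being totally real this exhibits $L$ as a CM-field. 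Moreover the inclusion $M\subseteq L$ just observed for every CM-subfield $M$ shows $L$ is the maximal one, so $H_{\mathrm{CM}}=L$ corresponds to $\langle s_\iota s_{\iota'}\rangle$ via class field theory, as claimed.

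For part (ii): expand $\zeta(0,c)=\frac{1}{|C_\mathfrak f|}\sum_{\chi}\bar\chi(c)\,L_\mathfrak f(0,\chi)$, the sum over characters $\chi$ of $C_\mathfrak f$, with $L_\mathfrak f(s,\chi):=\sum_{c'}\chi(c')\zeta(s,c')$. I will use the standard fact (from the functional equation and the shape of the archimedean $\Gamma$-factors for a totally real base field; see \cite{Yo}) that for $\chi\neq 1$ the order of vanishing of $L_\mathfrak f(s,\chi)$ at $s=0$ is at least the number of $\iota\in\HFR$ with $\chi(s_\iota)=1$, so $L_\mathfrak f(0,\chi)=0$ unless $\chi$ is totally odd, meaning $\chi(s_\iota)=-1$ for all $\iota$; and that $L_\mathfrak f(0,1)=0$ under the standing hypothesis $(F,\mathfrak f)\neq(\mathbb Q,(1))$. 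Under the hypothesis $\langle s_\iota\rangle=\langle s_\iota s_{\iota'}\rangle$ there is no totally odd character, since a totally odd $\chi$ would be trivial on every $s_\iota s_{\iota'}$, hence on $\langle s_\iota s_{\iota'}\rangle=\langle s_\iota\rangle$, forcing $\chi(s_\iota)=1\neq-1$. Therefore every term in the expansion vanishes and $\zeta(0,c)=0$.

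The routine but somewhat delicate part is the archimedean bookkeeping in (i): carefully translating the group relations $s_\iota^2=1$ and ``all $\bar s_\iota$ coincide'' into the properties ``totally real'' and ``CM'' through the decomposition groups at the infinite places, and verifying that a CM-subfield is forced to lie inside $L$. The only non-elementary ingredient is the vanishing input for $L_\mathfrak f(0,\chi)$ used in (ii), which is classical over totally real fields; everything else is the class field theory dictionary together with elementary group theory.
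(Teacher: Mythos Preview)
Your proof is correct and follows essentially the same route as the paper's own proof: for (i) you identify the fixed fields of $\langle s_\iota\rangle$ and $\langle s_\iota s_{\iota'}\rangle$ as the maximal totally real subfield and the maximal subfield on which all complex conjugations agree, exactly as the paper does; for (ii) you reduce to the nonexistence of a totally odd character and derive the contradiction from $\langle s_\iota\rangle=\langle s_\iota s_{\iota'}\rangle$, which is the paper's argument phrased in terms of $\chi$ vanishing on the subgroup rather than writing $s_\iota$ as an even-length product. Your write-up is more detailed (e.g.\ you spell out the character expansion of $\zeta(0,c)$ and the maximality of $L$ among CM-subfields), but the underlying ideas are identical.
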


\begin{proof}
(i) Note that the maximal totally real subfield of $H_{\mathfrak f}$ corresponds to $\langle s_\iota \rangle$ via the class field theory.
Let $H$ be the subfield of $H_\mathfrak f$ corresponding to $\langle s_\iota s_{\iota'} \rangle$. 
Then $H$ is the maximal subfield where all complex conjugations coincide (note that $s_\iota s_{\iota'}=s_\iota s_{\iota'}^{-1}$).
Therefore $\langle s_\iota \rangle \supsetneq \langle s_\iota s_{\iota'} \rangle$
means that the unique complex conjugation on $H$ is not trivial, that is, $H$ is the maximal CM-subfield. \\[5pt]
(ii) It suffices to show that any character $\chi$ of $C_\mathfrak f$ is not totally odd (i.e., $L(0,\chi)=0$).
Assume that $\chi(s_\iota)=-1$ for all $\iota \in \HFR$.
If $\langle s_\iota \rangle = \langle  s_\iota s_{\iota'} \rangle$, then we can write
$s_\iota=s_{\iota_1}\dotsm s_{\iota_{2k}}$,
so we have $\chi(s_\iota)=(-1)^{2k}=1$.
This is a contradiction. 
\end{proof}

When $F=\mathbb Q$, any $H_\mathfrak f$ is a CM-field, so (\ref{cnj1}) of Conjecture \ref{Yc} is equivalent to Conjecture \cite[Chap.\ III, Conjecture 3.9]{Yo}.
In this case, Yoshida \cite[Chap.\ III, Theorem 2.6]{Yo} proved that it follows from the following Rohrlich's formula in \cite[Theorem in Appendix]{Gr1}:
Consider $n$th Fermat curve $F_n \colon x^n+y^n=1$. Then each simple factor of its Jacobian variety has CM by $\mathbb Q(\zeta_n)$ 
and $\eta_{r,s}:=x^ry^{n-s} \frac{dx}{x}$ ($0<r,s<n$, $r+s\neq n$) are its differential forms of the second kind.
Then we have for any closed path $\gamma$ of $F_n(\mathbb C)$
\begin{align} \label{rh}
\int_\gamma \eta_{r,s} \equiv \frac{\Gamma(\frac{r}{n})\Gamma(\frac{s}{n})}{\Gamma(\frac{r+s}{n})} \mod \mathbb Q(\zeta_n)^\times.
\end{align}
On the other hand, (\ref{cnj2}) of Conjecture \ref{Yc} follows from Euler's reflection formula
\begin{align*}
\Gamma(s)\Gamma(1-s)=\frac{\pi}{\sin(\pi s)}.
\end{align*}
Yoshida also provided numerical examples \cite[Chap.\ III, \S 4]{Yo} in the case $F\neq \mathbb Q$.
We add one more example.

\begin{exm} \label{mawahc}
Let $K:=\mathbb Q(\sqrt{2\sqrt{5}-26})$, which is a non-abelian CM-field of degree $4$.
We define $\sigma \in \mathrm{Hom}(K,\mathbb C)$ by $\sigma(\sqrt{2\sqrt{5}-26}):= -\sqrt{-2\sqrt{5}-26}$ and denote the complex conjugation by $\rho$.
Then $\mathrm{Hom}(K,\mathbb C)=\{\mathrm{id},\rho,\sigma,\rho\circ\sigma\}$. 
The following curve is in the list of {\rm \cite{BS}}, whose Jacobian variety has CM by $K$ of CM-type $(K,\{\mathrm{id},\sigma\})$.
\begin{align*}
C\colon 
y^2=\tfrac{7+\sqrt{41}}{2}x^6+(-10-2\sqrt{41})x^5+10x^4+\tfrac{41+\sqrt{41}}{2}x^3+(3-2\sqrt{41})x^2+\tfrac{7-\sqrt{41}}{2}x+1.
\end{align*}
In fact, we see that
\begin{align*}
\omega_{\mathrm{id}}=\frac{2 dx}{y}+\frac{(\sqrt{5}-1) xdx}{y},\ \omega_{\sigma}=\frac{(-\sqrt{5}+\sqrt{41}) xdx}{y} 
\end{align*}
are holomorphic differential forms where $K$ acts via $\mathrm{id},\sigma$ respectively.
Numerically we have 
\begin{align*}
\int \omega_{\mathrm{id}}&=-0.4929421793\ldots-0.8116152991\ldots i, \\
\int \omega_\sigma&=-0.1395619319\ldots+0.1323795194\ldots i.
\end{align*}
Here we used Maple's command {\rm \texttt{periodmatrix}}.
Next, define $C'$ by  replacing $\sqrt{41}$ with $-\sqrt{41}$. We see that its Jacobian variety CM of CM-type $(K,\{\mathrm{id},\rho \circ \sigma\})$
and that 
\begin{align*}
\omega_{\mathrm{id}}':=\frac{2 dx}{y}+\frac{(\sqrt{5}-1) xdx}{y},\ \omega_{\rho \circ \sigma}':=\frac{(-\sqrt{5}+\sqrt{41}) xdx}{y}
\end{align*}
are its holomorphic differential forms where $K$ acts via $\mathrm{id},\rho \circ \sigma$ respectively.
In this case we have
\begin{align*}
\int \omega_{\mathrm{id}}'&=-0.4443866005\ldots-0.3099403507\ldots i, \\
\int \omega_{\rho \circ \sigma}'&=-2.0247186165\ldots+0.4533729269\ldots i.
\end{align*}
By {\rm (\ref{period0})}, {\rm (\ref{mr})}, we obtain
\begin{align*}
\pi p_K(\mathrm{id},\mathrm{id})p_K(\mathrm{id},\rho)^{-1}
&\equiv \pi p_K(\mathrm{id},\mathrm{id})p_K(\mathrm{id},\sigma)p_K(\mathrm{id},\mathrm{id})p_K(\mathrm{id},\rho\circ \sigma) \\
&\equiv \pi^{-1} \int \omega_{\mathrm{id}}\int \omega_{\mathrm{id}}' \mod \overline{\mathbb Q}^\times.
\end{align*}
On the other hand, $K$ is abelian over $F:=\mathbb Q(\sqrt{5})$ with the conductor $\mathfrak f:=(\frac{13-\sqrt{5}}{2})$.
We easily see that  
\begin{align*}
&C_\mathfrak f=\{c_1,c_2\} \text{ with } c_1:=[(1)],c_2:=[(3)], \\ 
&C_\mathfrak f\cong \mathrm{Gal}(K/F),\ c_1 \leftrightarrow \mathrm{id},\ c_2 \leftrightarrow \rho, \\ 
&\zeta(0,c_1)=1,\ \zeta(0,c_2)=-1. 
\end{align*}
Then {\rm Conjecture \ref{Yc}} in this case states that
\begin{align*}
\exp(X(c_1)) \equiv \pi^{-1} \int \omega_{\mathrm{id}}\int \omega_{\mathrm{id}}'\mod \overline{\mathbb Q}^\times.
\end{align*}
In fact, we found numerically that  
\begin{align} \label{num}
\pi^{-1} \int \omega_{\mathrm{id}}\int \omega_{\mathrm{id}}'
=\exp(X(c_1;D,\mathfrak a_{c_1}))(\tfrac{\sqrt{5}-1}{2})^{\frac{14}{41}}\tfrac{\sqrt{-8\sqrt{5}+20+(\sqrt{5}+15)\sqrt{2\sqrt{5}-26}}}{80}
\end{align}
for $D:=C(1,\frac{3+\sqrt 5}{2})\coprod C(1)$, $\mathfrak a_{c_1}:=\mathfrak f$.
\end{exm}

\section{A reciprocity law}

In this section, we define two types of period-ring-valued functions and conjecture a reciprocity law on their special values.
We also study a relation between Stark's conjecture, Gross' conjecture and this reciprocity law.

\subsection{A $p$-adic analogue of Shimura's period symbol} 
$p$-adic analogues of $p_K$ were also studied in \cite{dS}, \cite{Gi}, \cite{KY2}.
For an abelian variety $A$ defined over $\oq$, we consider the following comparison isomorphisms 
in the sense of $p$-adic Hodge theory developed by many mathematicians (for example, \cite{Fo1}, \cite{Fo2}, \cite{Fa}, \cite{Ts}):
let $B_{\mathrm{dR}},B_{\mathrm{cris}}$ be Fontaine's $p$-adic period rings.
We have 
\begin{align*}
H_\mathrm{B}^1(A(\mathbb C),\mathbb Q)\otimes_\mathbb Q \mathbb Q_p &\cong H_{p,\textrm{\'et}}^1(A,\mathbb Q_p), \\
H_{p,\textrm{\'et}}^1(A,\mathbb Q_p)\otimes_{\mathbb Q_p} B_{\mathrm{dR}} &\cong H_\mathrm{dR}^1(A,\oq)\otimes_{\oq} B_{\mathrm{dR}}.
\end{align*}
Here we denote by $H_B$, $H_{p,\textrm{\'et}}$ and $H_\mathrm{dR}$, the singular cohomology group, the $p$-adic \'etale cohomology group
and the de Rham cohomology group respectively.
We consider the composite map
\begin{align} \label{Ip}
H_\mathrm{B}^1(A(\mathbb C),\mathbb Q)\otimes_\mathbb Q B_{\mathrm{dR}} \cong H_\mathrm{dR}^1(A,\oq)\otimes_{\oq} B_{\mathrm{dR}},
\end{align}
which is a $p$-adic analogue of the de Rham isomorphism
\begin{align} \label{I}
H_\mathrm{B}^1(A(\mathbb C),\mathbb Q)\otimes_\mathbb Q \mathbb C \cong H_\mathrm{dR}^1(A,\oq)\otimes_{\oq} \mathbb C.
\end{align}
Additionally we have the duality
\begin{align*}
H_1(A(\mathbb C),\mathbb Q) \times H_\mathrm{B}^1(A(\mathbb C),\mathbb Q) \ra \mathbb Q
\end{align*}
between the singular cohomology group $H_\mathrm{B}^1$ and the singular homology group $H_1$.
Hence we obtain the $p$-adic period integral 
\begin{align*}
H_1(A(\mathbb C),\mathbb Q) \times H_\mathrm{dR}^1(A,\oq) \ra B_{\mathrm{dR}}, \ 
(\gamma,\omega) \mt \int_{\gamma,p} \omega,
\end{align*}
which is a $p$-adic analogue of usual period integral 
\begin{align*}
H_1(A(\mathbb C),\mathbb Q) \times H_\mathrm{dR}^1(A,\oq) \ra \mathbb C, \ 
(\gamma,\omega) \mt \int_\gamma \omega.
\end{align*}
If $A$ has CM, then it has potentially good reduction, so we see that 
\begin{align*}
\int_{\gamma,p} \omega \in B_{\mathrm{cris}}\overline{\mathbb Q_p}.
\end{align*}
Here $B_{\mathrm{cris}}\overline{\mathbb Q_p}$ denotes the composite ring of $B_{\mathrm{cris}}$ and $\overline{\mathbb Q_p}$ in $B_{\mathrm{dR}}$.
Let $\mathbb Q(-1)$ be the Lefschetz motive.
Similarly to (\ref{I}), (\ref{Ip}), we obtain 
\begin{align*}
&I\colon H_\mathrm{B}(\mathbb Q(-1))\otimes_\mathbb Q  \mathbb C \st{\cong}\ra H_\mathrm{dR}(\mathbb Q(-1)) \otimes_\mathbb Q \mathbb C, \\
&I_p\colon H_\mathrm{B}(\mathbb Q(-1))\otimes_\mathbb Q B_{\mathrm{dR}} \st{\cong}\ra H_\mathrm{dR}^1(\mathbb Q(-1))\otimes_\mathbb Q B_{\mathrm{dR}}.
\end{align*}
Taking a non-zero element $c \in H_\mathrm{B}(\mathbb Q(-1))$, 
we define the $p$-adic counterpart $(2\pi i)_p \in B_\mathrm{cris}$ of $2\pi i$ by
\begin{align*}
I_p(c \otimes (2\pi i)_p):= I(c \otimes 2\pi i) \quad (\in  H_\mathrm{dR}(\mathbb Q(-1))).
\end{align*}
Then, for a CM-field $K$, we define a $\mathbb Q$-bilinear map 
\begin{align*}
p_{K,p} \colon I_{K,\mathbb Q} \times I_{K,\mathbb Q} \ra B_{\mathrm{dR}}^\times/\oq^\times
\end{align*}
by replacing $p_K(\iota,\Xi)$ of {\rm (\ref{period})} with 
\begin{align*} 
p_{K,p}(\iota,\Xi):=
\begin{cases}
(2\pi i)_p^{-1} \int_{\gamma,p} \omega_{\iota} & (\iota \in \Xi), \\
\int_{\gamma,p} \omega_{\iota} & (\iota \in \rho \circ \Xi).
\end{cases}
\end{align*}
On the other hand, by the results in \cite{Bl2}, we obtain the following comparison isomorphisms of motives:
let $M(\chi)$ be a motive associated with an algebraic Hecke character $\chi$ of $k$, $K$ the field of coefficients of $M(\chi)$.
We denote by $\mathfrak P$ the prime ideal of $k$ corresponding to the $p$-adic topology, by $k_\mathfrak P$ the $\mathfrak P$-adic completion of $k$, 
$W(k_\mathfrak P)$ the ring of Witt vectors over the residue field of $k_\mathfrak P$. 
We denote by $H_{\mathrm{cris}}$ the crystalline realization.
Then we have
\begin{align*}
H_{\mathrm{B}}(M(\chi))\otimes_\mathbb Q \mathbb Q_p &\cong H_{p,\textrm{\'et}}(M(\chi)), \\
H_{p,\textrm{\'et}}(M(\chi)) \otimes_{\mathbb Q_p} B_{\mathrm{cris}} &\cong H_{\mathrm{cris}}(M(\chi)) \otimes_{W(k_\mathfrak P)} B_{\mathrm{cris}}, \\
H_{\mathrm{cris}}(M(\chi)) \otimes_{W(k_\mathfrak P)} k_\mathfrak P &\cong H_{\mathrm{dR}}(M(\chi)) \otimes_k k_\mathfrak P.
\end{align*}
For details, see \cite[\S 2]{KY2}.
Combining these isomorphisms, we obtain
\begin{align} \label{ci}
H_{\mathrm{B}}(M(\chi))\otimes_\mathbb Q B_{\mathrm{cris}}\overline{\mathbb Q_p} 
\cong H_{\mathrm{dR}}(M(\chi)) \otimes_k B_{\mathrm{cris}}\overline{\mathbb Q_p}.
\end{align}
Taking the same basis $c_b \in H_{\mathrm{B}}(M(\chi))$, $c_{dr} \in H_{\mathrm{dR}}(M(\chi))$ as those in \S \ref{Sps2}, 
we define $\mathrm{Per}_p(\chi)=(\mathrm{Per}_p(\iota,\chi))_{\iota} \in K\otimes_\mathbb Q B_{\mathrm{cris}}\overline{\mathbb Q_p}
=\oplus_{\iota }B_{\mathrm{cris}}\overline{\mathbb Q_p}$ by
\begin{align*}
\mathrm{Per}_p(\chi)I(c_b\otimes 1):=c_{dr}\otimes 1.
\end{align*}
By replacing 
$2\pi i, p_K,\mathrm{Per}$ with $(2\pi i)_p, p_{K,p},\mathrm{Per}_p$ respectively, 
we obtain a $p$-adic counterpart of (\ref{ptoP}).
In particular we see that 
\begin{align*}
p_{K,p} \colon I_{K,\mathbb Q} \times I_{K,\mathbb Q} \ra B_\mathrm{dR}^\times/ \oq^\times
\end{align*} 
is well-defined. We also generalize some results in \cite{Shim} as follows.

\begin{prp} \label{prpofp}
Let the notation be as in the definition of $p_K,p_{K,p}$.
We denote by $\mu_\infty$ the group of all roots of unity.
\begin{enumerate}
\item For each CM type $\Xi$, the ratio
\begin{align*}
[\int_{\gamma} \omega_{\iota}:\int_{\gamma,p} \omega_{\iota}] \in (\mathbb C^\times \times B_\mathrm{dR}^\times)/\oq^\times
\end{align*}
does not depend on the choices of $A_\Xi,\gamma,\omega_{\iota}$.
In particular, the bilinear map
\begin{align*}
[p_K:p_{K,p}]\colon I_{K,\mathbb Q}\times I_{K,\mathbb Q} \ra (\mathbb C^\times \times B_\mathrm{dR}^\times)/(\mu_\infty\times\mu_\infty)\oq^\times
\end{align*}
is well-defined.
Here we assume that we take the same $\Xi_j,n_j,A_\Xi,\gamma,\omega_{\iota}$ 
when we define $p_K(\Xi,\Xi'),p_{K,p}(\Xi,\Xi')$ for each $\Xi,\Xi' \in I_{K,\mathbb Q}$.
\item For $\iota,\iota' \in \HKC$, we have
\begin{align*}
[p_K:p_{K,p}](\iota+\rho\circ \iota,\iota') \equiv [p_K:p_{K,p}](\iota,\iota'+\rho\circ \iota') \equiv [1:1] \mod \mu_\infty.
\end{align*}
Here $[p_K:p_{K,p}](\Xi,\Xi')$ means that $[p_K(\Xi,\Xi'):p_{K,p}(\Xi,\Xi')]$,
$[a:b]\equiv [c:d] \bmod \mu_\infty$ means that $\frac{a}{c}\frac{d}{b} \in \mu_\infty$.
\item Let $\phi\colon K \ra \phi(K)$ be an isomorphism between CM-fields. Then for $\iota,\iota' \in \HKC$ we have
\begin{align*}
[p_K:p_{K,p}](\iota,\iota') \equiv [p_{\phi(K)}:p_{\phi(K),p}](\iota \circ\phi^{-1},\iota' \circ\phi^{-1}) \mod \mu_\infty.
\end{align*}
\item Let $K\subset \tilde K$ be an extension of CM-fields. Then for $\tilde \iota \in \mathrm{Hom}(\tilde K,\mathbb C)$, $\iota' \in \HKC$ we have
\begin{align*}
&[p_K:p_{K,p}](\tilde \iota|_K,\iota') 
\equiv [p_{\tilde K}:p_{\tilde K,p}](\tilde \iota,\sum_{\tilde \iota' \in \mathrm{Hom}(K',\mathbb C),\ \tilde{\iota'}|_K=\iota'}\tilde \iota') \mod \mu_\infty, \\
&[p_K:p_{K,p}](\iota', \tilde \iota|_K) 
\equiv [p_{\tilde K}:p_{\tilde K,p}](\sum_{\tilde \iota' \in \mathrm{Hom}(K',\mathbb C),\ \tilde{\iota'}|_K=\iota'}\tilde \iota',\tilde \iota) \mod \mu_\infty.
\end{align*}
\end{enumerate}
\end{prp}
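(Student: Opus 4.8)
The plan is to deduce all four assertions by transcribing the proofs of the corresponding complex statements in \cite{Shim} into the $p$-adic setting, using the functoriality of the comparison isomorphisms (\ref{Ip}) and (\ref{ci}). The guiding principle is that every geometric or field-theoretic operation used in the complex argument here --- replacing an abelian variety by an isogenous one, applying the polarization correspondence (\ref{corresp}), inflating a CM-structure along a field extension, or transporting it along an isomorphism --- is a morphism of motives defined over a number field, hence is respected \emph{simultaneously} by (\ref{I}) and by (\ref{Ip}); so the archimedean period and the $p$-adic period pick up the \emph{same} algebraic scaling factor, and the ratio $[p_K:p_{K,p}]$ inherits the corresponding identity modulo $\oq^\times$, sharpened to modulo $\mu_\infty$ once the square-root normalization (\ref{nlz}) is accounted for. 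I would first isolate this as a preliminary lemma: for a morphism of motives over a number field among the objects in play --- all of which, since $A$ has CM and hence potentially good reduction, give periods in $B_{\mathrm{cris}}\overline{\mathbb Q_p}$ --- the induced maps on the Betti, de Rham and ($p$-adic \'etale/crystalline) realizations fit into a commutative diagram with (\ref{Ip}) whose horizontal arrows are defined over $\oq$. This is exactly the input furnished by \cite{Fa}, \cite{Ts}, \cite{Bl2}.

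For (i), note that $H_1(A_\Xi(\mathbb C),\mathbb Q)$ is free of rank one over $K$ while $H^1_{\mathrm{dR}}(A_\Xi)$ splits into one-dimensional $\iota$-eigenspaces over $\oq$. Replacing $\gamma$ within its $K^\times$-orbit, or $\omega_\iota$ within its $\oq^\times$-orbit, multiplies $\int_\gamma\omega_\iota$ and $\int_{\gamma,p}\omega_\iota$ by one and the same element of $\oq^\times$, because the $\mathbb Q$-rational duality between $H_1$ and singular $H^1_{\mathrm{B}}$ and the $K$-action enter (\ref{I}) and (\ref{Ip}) identically; replacing $A_\Xi$ by an isogenous abelian variety with the same CM type is absorbed by the preliminary lemma applied to the $\oq$-isogeny. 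Hence $[\int_\gamma\omega_\iota:\int_{\gamma,p}\omega_\iota]$ is well defined in $(\mathbb C^\times\times B_{\mathrm{dR}}^\times)/\oq^\times$; extending bilinearly over the chosen CM-type decompositions and passing to the $\mu_\infty\times\mu_\infty$-quotient absorbs the residual sign ambiguities in the square roots of (\ref{nlz}) and in the integers $n_j$, giving the well-definedness of $[p_K:p_{K,p}]$.

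For (ii), the polarization of $A_\Xi$ yields the correspondence (\ref{corresp}), a morphism of motives onto $\mathbb Q(-1)$. Evaluated through (\ref{I}) it reproduces $\int_\gamma\omega_\iota\int_{\gamma'}\omega_{\rho\circ\iota}\equiv 2\pi i\bmod\oq^\times$ as in \S 3, and evaluated through (\ref{Ip}), by the preliminary lemma, it gives the identical relation with $2\pi i$ replaced by $(2\pi i)_p$ and with the \emph{same} algebraic constant. As $(2\pi i)_p$ was defined precisely so that the $\mathbb Q(-1)$-period is trivial relative to the complex one, the class $[p_K:p_{K,p}](\iota,\iota'+\rho\circ\iota')$, which by (\ref{period}) is built from such products divided by $2\pi i$, resp.\ by $(2\pi i)_p$, equals $[1:1]$ modulo $\mu_\infty$; the statement for $[p_K:p_{K,p}](\iota+\rho\circ\iota,\iota')$ then follows by bilinearity together with (\ref{mr}).

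For (iii), if $A$ has CM of type $(K,\Xi)$ then the \emph{same} abelian variety, with $\phi(K)$ acting through $\phi^{-1}$, has CM of type $(\phi(K),\{\iota\circ\phi^{-1}\mid\iota\in\Xi\})$, and the form $\omega_\iota$ for the $K$-action is literally the form $\omega_{\iota\circ\phi^{-1}}$ for the $\phi(K)$-action; the complex and $p$-adic period integrals are thus unchanged and the equality of ratios is immediate. For (iv), I would realize inflation by forming $A:=B\otimes_K\tilde K$ from an abelian variety $B$ with CM by $(K,\Xi')$: this is an abelian variety with CM by $\tilde K$ whose CM-type is the inflation of $\Xi'$, and for which $H^1(A)\cong H^1(B)\otimes_K\tilde K$ sends $\omega_{\tilde\iota}$ to $\omega_{\tilde\iota|_K}\otimes 1$, so that both the complex and the $p$-adic period of $A$ at $\tilde\iota$ equal those of $B$ at $\tilde\iota|_K$ up to $\oq^\times$; unwinding the definitions of $p_K,p_{\tilde K}$ (equivalently, passing through Proposition \ref{pphi} and the identity $M(\chi\circ N_{\tilde K/K})=M(\chi)\times_K\tilde K$ of (\ref{PtoP}) and its $p$-adic analogue) yields the first displayed identity, and the second follows by exchanging the roles of the two arguments. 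The main obstacle is not any single calculation but the bookkeeping behind the preliminary lemma: one must verify that none of these constructions leaves the class of motives for which the comparison isomorphisms of \cite{Fa}, \cite{Ts}, \cite{Bl2} apply functorially, and that the algebraic rescaling they induce is genuinely identical on the archimedean and $p$-adic sides --- precisely where the $p$-adic Hodge-theoretic input, rather than a formal transcription of \cite{Shim}, does the work.
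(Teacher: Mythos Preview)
Your overall strategy---transcribe Shimura's complex arguments and track that both the archimedean and $p$-adic comparison isomorphisms absorb the same $\oq^\times$-scalar under each motivic construction---is exactly the paper's approach; parts (i), (iii), (iv) are handled correctly and in line with the paper, which simply cites the relevant results in \cite{Shim}.

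In part (ii), however, you have the two identities swapped. The identity $[p_K:p_{K,p}](\iota,\iota'+\rho\circ\iota')\equiv[1:1]$ is \emph{trivial by the normalization} (\ref{nlz}): since $(\iota'+\rho\circ\iota')-\rho\circ(\iota'+\rho\circ\iota')=0$, the definition gives $p_K(\iota,\iota'+\rho\circ\iota')=1$ (and likewise for $p_{K,p}$), with no need for the polarization. Conversely, the identity $[p_K:p_{K,p}](\iota+\rho\circ\iota,\iota')\equiv[1:1]$ is the one that genuinely requires the polarization input (\ref{corresp}) and cannot be extracted from (\ref{mr}) by bilinearity: (\ref{mr}) is a statement about the \emph{second} argument, so it yields $p_K(\iota,\iota')p_K(\iota,\rho\circ\iota')\equiv 1$, not $p_K(\iota,\iota')p_K(\rho\circ\iota,\iota')\equiv 1$. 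The correct route is the one the paper takes: from (\ref{corresp}) one gets $[\int_\gamma\omega_\iota\int_{\gamma'}\omega_{\rho\circ\iota}:\int_{\gamma,p}\omega_\iota\int_{\gamma',p}\omega_{\rho\circ\iota}]=[2\pi i:(2\pi i)_p]$, and then, writing $\iota'-\rho\circ\iota'=\sum_j n_j\Xi_j$ and using (\ref{period}) together with (\ref{sum}) and $\sum_j n_j=0$, one computes $[p_K:p_{K,p}](\iota+\rho\circ\iota,\Xi_j)\equiv[(2\pi i)^{-1}(2\pi i):(2\pi i)_p^{-1}(2\pi i)_p]=[1:1]$ for each $j$. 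Once you interchange the roles you assigned to the two cases, your argument goes through.
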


\begin{proof}
(i) The first assertion follows by the same argument used in the proof of \cite[Theorem 32.2-(1)]{Shim}.
The ambiguity up to $\mu_\infty$ occurs when we take a rational power of 
$\int_{\gamma} \omega_{\iota}$ or $\int_{\gamma,p} \omega_{\iota}$. \\[5pt]
(ii) The correspondence (\ref{corresp}) implies 
\begin{align} \label{2pii}
[\int_{\gamma} \omega_{\iota} \int_{\gamma'} \omega_{\rho\circ\iota}:\int_{\gamma,p} \omega_{\iota} \int_{\gamma',p} \omega_{\rho\circ\iota}]
=[2\pi i: (2\pi i)_p].
\end{align}
Hence, by (\ref{sum}) (and $\sum_{j=1}^l n_j=0$), we can write
\begin{align*}
&[p_K:p_{K,p}](\iota+\rho\circ \iota,\iota') \\
&\equiv 
\prod_{j=1}^l  [p_K:p_{K,p}](\iota+\rho\circ \iota,\Xi_j)^{n_j} \\
&\equiv \prod_{\substack{1\leq j \leq l\\ \iota \in \Xi_j}}[(2\pi i)^{-1}:(2\pi i)_p^{-1}]^{n_j} 
\prod_{\substack{1\leq j \leq l\\ \rho \circ \iota \in \Xi_j}}[(2\pi i)^{-1}:(2\pi i)_p^{-1}]^{n_j} 
\prod_{j=1}^l  [2\pi i: (2\pi i)_p]^{n_j} \\
& \equiv [1:1] \mod \mu_\infty.
\end{align*}
The assertion $[p_K:p_{K,p}](\iota,\iota'+\rho\circ \iota') \equiv [1:1] \bmod \mu_\infty$ is trivial by definition. \\[5pt]
(iii), (iv) follow by the same argument used in the proof of \cite[Theorem 32.5-(3), (4), (5)]{Shim}.
\end{proof}

By replacing $2\pi i, p_K,\mathrm{Per}$ with $(2\pi i)_p, p_{K,p},\mathrm{Per}_p$ respectively, 
we also obtain a $p$-adic counterpart of Proposition \ref{pphi}.
Moreover we have  
\begin{align} 
&[p_K:p_{K,p}](\iota,\Xi) \notag \\
&\equiv
[(2 \pi i)^{\frac{r_{\rho\circ \iota}-r_{\iota}}{2}}\mathrm{Per}(\iota,\chi_{w\Xi})^{\frac{1}{2w}}:
(2 \pi i)_p^{\frac{r_{\rho\circ \iota}-r_{\iota}}{2}}\mathrm{Per}_p(\iota,\chi_{w\Xi})^{\frac{1}{2w}}] \mod \mu_\infty, \label{pptoPP}
\end{align}
where $\Xi=\sum_{\iota \in \mathrm{Aut}(K)} r_\iota \cdot \iota$ and $\chi_{w\Xi}$ are as in Proposition \ref{pphi}.

For later use, we prepare a proposition concerning the absolute Frobenius action on $p$-adic periods.

\begin{dfn} \label{frb}
Let $k/\mathbb Q_p$ be a finite extension, $\mathfrak P$ the prime ideal of $k$.
We denote by $k^{\mathrm{ur}} \subset \overline{\mathbb Q_p}$ the maximal unramified extension of $k$, 
by $\mathrm{Frob}_\mathfrak P \in \mathrm{Gal}(k^{\mathrm{ur}}/k)$ the Frobenius automorphism.
The Weil group $W_k$ is defined as the group of all $\tau \in \mathrm{Gal}(\overline{\mathbb Q_p} /k)$ satisfying 
\begin{center}
$\tau|_{k^{\mathrm{ur}}}=\mathrm{Frob}_\mathfrak P^{\deg_\mathfrak P \tau}$ with $\deg_\mathfrak P \tau \in \mathbb Z$.
\end{center}
Associated to $\tau \in W_k$, we define a $\tau$-semilinear action $\Phi_\tau$ 
on $B_{\mathrm{cris}}\overline{\mathbb Q_p}\cong B_{\mathrm{cris}}\otimes_{\mathbb Q_p^{\mathrm{ur}}} \overline{\mathbb Q_p}$ by
\begin{align*}
\Phi_\tau:=\Phi_{\mathrm{cris}}^{\deg_\mathfrak P \tau \deg \mathfrak P} \otimes \tau,
\end{align*}
where $\Phi_{\mathrm{cris}}$ denotes the absolute Frobenius action on $B_{\mathrm{cris}}$.
\end{dfn}

\begin{prp}[{\cite[(3.5)]{KY2}}] \label{defP}
Let $\chi$ be an algebraic Hecke character of a number field $k$.
We take $M(\chi),K$ as above, and define $\mathrm{Per}_p(\iota,\chi) \in B_{\mathrm{cris}}\overline{\mathbb Q_p}$ $(\iota \in \HKC)$ 
for any basis $c_b \in H_{\mathrm{B}}(M(\chi))$, $c_{dr} \in H_{\mathrm{dR}}(M(\chi))$.
Let $\mathfrak P$ be the prime ideal corresponding to the $p$-adic topology on $k$.
Assume that $\chi$ is unramified at $\mathfrak P$.
Then we have for $\tau \in W_{k_{\mathfrak P}}$ with $\deg_{\mathfrak P} \tau=1$
\begin{align*}
\Phi_\tau(\mathrm{Per}_p(\iota,\chi))
=\iota(\chi(\mathfrak P))\mathrm{Per}_p(\iota,\chi).
\end{align*}
Additionally, let $\Xi_\chi$ be the infinity type of $\chi$, $\pi_\mathfrak P$ a generator of $\mathfrak P^{h_k}$ with $h_k$ the class number.
Then we have
\begin{align*}
\Phi_\tau(\mathrm{Per}_p(\iota,\chi))
\equiv \iota(\Xi_\chi(\pi_\mathfrak P))^{\frac{1}{h_k}}\mathrm{Per}_p(\iota,\chi) \mod \mu_\infty.
\end{align*}
Here for $\Xi=\sum_{\iota \in \mathrm{Hom}(k,\mathbb C)} n_\iota \cdot \iota \in I_k$, $\alpha \in k$, 
we put $\Xi(\alpha):=\prod_{\iota} \iota(\alpha)^{n_\iota}$ as usual.
\end{prp}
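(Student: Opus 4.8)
The plan is to translate the statement into the language of the crystalline realization and its Frobenius, and then to identify the relevant eigenvalue with $\iota(\chi(\mathfrak P))$ through the $L$-function of $M(\chi)$. First I would use the hypothesis that $\chi$ is unramified at $\mathfrak P$: this makes $M(\chi)$ have good reduction at $\mathfrak P$ (for instance, after replacing $\chi$ by a high enough power so that its values lie in $K$, the motive $M(\chi)$ is realized in the cohomology of an abelian variety over $k$ with complex multiplication and good reduction at $\mathfrak P$), so the $\mathfrak P$-adic \'etale realization $V:=H_{p,\textrm{\'et}}(M(\chi))$ is crystalline, the comparison isomorphism (\ref{ci}) is available, and $H_{\mathrm{cris}}(M(\chi))$ is a well-defined rank-one filtered $\varphi$-module over $W(k_\mathfrak P)[1/p]$, on which I would work throughout.

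Next I would pin down the Frobenius eigenvalue. The $\mathfrak P$-linearized Frobenius $\Phi_{\mathrm{cris}}^{\deg\mathfrak P}$ acts $K\otimes_{\mathbb Q}W(k_\mathfrak P)[1/p]$-linearly on the rank-one module $H_{\mathrm{cris}}(M(\chi))$, hence as multiplication by a unit whose $\iota$-component I call $\mu_\iota$. By the compatibility (Katz--Messing) of the crystalline and the $\ell$-adic ($\ell\neq p$) realizations, the characteristic polynomial of $\Phi_{\mathrm{cris}}^{\deg\mathfrak P}$ agrees with that of the geometric Frobenius at $\mathfrak P$ on the $\ell$-adic side; and since $L(M(\chi),s)=(L(s,\iota\circ\chi))_\iota$ with $\chi$ unramified at $\mathfrak P$, the Euler factor at $\mathfrak P$ forces $\mu_\iota=\iota(\chi(\mathfrak P))$. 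The normalization $\deg_{\mathfrak P}\tau=1$ built into Definition \ref{frb} is precisely what makes $\Phi_\tau$ realize this $\mathfrak P$-Frobenius on $B_{\mathrm{cris}}\overline{\mathbb Q_p}$, with no extraneous Tate twist; for the underlying CM abelian variety the identity $\mu_\iota=\iota(\chi(\mathfrak P))$ is Shimura--Taniyama reciprocity at $\mathfrak P$.

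Then I would unwind the definition of $\mathrm{Per}_p(\iota,\chi)$: it is the $\iota$-coordinate of the scalar in $K\otimes_{\mathbb Q}B_{\mathrm{cris}}\overline{\mathbb Q_p}$ relating the $\mathbb Q$-rational Betti class $c_b$ to the $k$-rational de Rham class $c_{dr}$ through the composite comparison isomorphism (\ref{ci}). Applying $\Phi_\tau$ for $\tau\in W_{k_\mathfrak P}$ with $\deg_{\mathfrak P}\tau=1$, and using that the crystalline comparison is Frobenius-compatible, the action of $\Phi_\tau$ on the coefficient ring gets converted into the action of the $\mathfrak P$-linearized Frobenius $\Phi_{\mathrm{cris}}^{\deg\mathfrak P}$ on $H_{\mathrm{cris}}(M(\chi))$ --- multiplication by $\mu_\iota=\iota(\chi(\mathfrak P))$ on the $\iota$-component --- while $c_b\otimes 1$ is $\Phi_\tau$-fixed ($c_b$ is $\mathbb Q$-rational and $1$ is $\Phi_\tau$-fixed) and $c_{dr}$ is $k$-rational with $\tau$ fixing $k\subset k_\mathfrak P$, so no further contribution appears. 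This yields the first displayed identity. For the second, I would combine $\iota(\chi(\mathfrak P))^{h_k}=\iota(\chi((\pi_\mathfrak P)))$ with the standard relation between an algebraic Hecke character on principal ideals prime to its conductor and its infinity type, namely $\chi((\pi_\mathfrak P))\equiv\Xi_\chi(\pi_\mathfrak P)\bmod\mu_\infty$, to obtain $\iota(\chi(\mathfrak P))\equiv\iota(\Xi_\chi(\pi_\mathfrak P))^{1/h_k}\bmod\mu_\infty$.

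The hard part will be the third step: correctly tracking the $\tau$-semilinearity of $\Phi_\tau=\Phi_{\mathrm{cris}}^{\deg\mathfrak P}\otimes\tau$ on $B_{\mathrm{cris}}\overline{\mathbb Q_p}$ and its interaction, across the Betti--\'etale, crystalline, and crystalline--de Rham comparisons, with the $\varphi$-module structure on $H_{\mathrm{cris}}(M(\chi))$; in particular, one must verify that the $\mathbb Q$-rationality of $c_b$ and the $k$-rationality of $c_{dr}$ leave no residual Galois twist, so that the exact eigenvalue $\iota(\chi(\mathfrak P))$ --- not merely something congruent to it modulo $\mu_\infty$ --- appears in the first identity. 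This bookkeeping is exactly what is carried out in \cite[(3.5)]{KY2}.
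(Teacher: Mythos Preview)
Your proposal is correct and follows essentially the same route as the paper's proof: both pass to the crystalline realization via the comparison isomorphism (\ref{ci}), use Frobenius-compatibility together with the triviality of the absolute Frobenius on the \'etale/Betti side and the $k$-rationality of $c_{dr}$, and read off the eigenvalue $\iota(\chi(\mathfrak P))$ of $\Phi_{\mathrm{cris}}^{\deg\mathfrak P}$ on the rank-one module $H_{\mathrm{cris}}(M(\chi))$; the only cosmetic difference is that you justify this eigenvalue via Katz--Messing and the Euler factor of $L(M(\chi),s)$, whereas the paper simply invokes \cite[(2.4)]{KY2}. Your derivation of the second assertion is likewise equivalent to the paper's (which makes the $\mu_\infty$-ambiguity explicit by choosing $l$ with $\pi_\mathfrak P^{\,l}\equiv 1$ modulo the conductor so that $\chi(\mathfrak P)^{h_k l}=\Xi_\chi(\pi_\mathfrak P)^{l}$).
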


\begin{proof}
Although the assertion follows from \cite[(3.5)]{KY2}, we provide a proof since there are some differences of notation.
We can rewrite (\ref{ci}) in terms of the comparison isomorphism between the $p$-adic \'etale cohomology group and the crystalline cohomology group:
\begin{align*} 
H_{p,\textrm{\'et}}(M(\chi)) \otimes_{\mathbb Q_p} B_{\mathrm{cris}}\overline{\mathbb Q_p} 
&\cong (H_{\mathrm{cris}}(M(\chi)) \otimes_{W(k_\mathfrak P)}k_\mathfrak P) 
\otimes_{k_\mathfrak P} B_{\mathrm{cris}}\overline{\mathbb Q_p} \\
\mathrm{Per}_p(\chi)(c_b\otimes 1) \ \quad &\leftrightarrow \qquad\qquad\qquad c_{dr}\otimes 1,
\end{align*}
which is consistent with the absolute Frobenius action:
\begin{align*}
1\otimes \Phi_\tau \leftrightarrow \Phi_{\mathrm{cris}}^{\deg_\mathfrak P \tau \deg \mathfrak P} \otimes 1 \otimes \Phi_\tau
\quad (\tau\in W_{k_\mathfrak P}).
\end{align*}
Here we regard 
\begin{align*}
&c_b \in H_{\mathrm{B}}(M(\chi)) \subset H_{\mathrm{B}}(M(\chi)) \otimes_\mathbb Q \mathbb Q_p \cong H_{p,\textrm{\'et}}(M(\chi)), \\ 
&c_{dr} \in H_{\mathrm{dR}}(M(\chi)) \subset H_{\mathrm{dR}}(M(\chi)) \otimes_k k_\mathfrak P \cong
H_{\mathrm{cris}}(M(\chi)) \otimes_{W(k_\mathfrak P)}k_\mathfrak P.
\end{align*}
We note that
\begin{itemize}
\item The absolute Frobenius acts trivially on $H_{p,\textrm{\'et}}(M(\chi))$.
\item $\Phi_{\mathrm{cris}}^{\deg \mathfrak P}$ acts on $H_{\mathrm{cris}}(M(\chi))$ as $\chi(\mathfrak P) \otimes 1$ (\cite[(2.4)]{KY2}).
\end{itemize}
Then we have for $\tau \in W_{k_\mathfrak P}$ with $\deg_{\mathfrak P} \tau=1$
\begin{align*}
\Phi_\tau(\mathrm{Per}_p(\chi)(c_b\otimes 1))&=
\Phi_\tau(\mathrm{Per}_p(\chi))(c_b\otimes 1), \\ 
\Phi_\tau(c_{dr}\otimes 1)&=(\chi(\mathfrak P) \otimes 1)(c_{dr}\otimes 1).
\end{align*}
Then the first assertion follows.
Let $l\in \mathbb N$ satisfy $\pi_\mathfrak P^{l} \equiv 1$ modulo the conductor of $\chi$.
Then we see that 
\begin{align*}
\chi(\mathfrak P)\equiv \chi(\mathfrak P^{h_kl})^{\frac{1}{h_kl}}
\equiv \chi(\pi_\mathfrak P^l)^{\frac{1}{h_kl}}
\equiv \Xi_\chi(\pi_\mathfrak P^l)^{\frac{1}{h_kl}}
\equiv \Xi_\chi(\pi_\mathfrak P)^{\frac{1}{h_k}}\mod \mu_\infty.
\end{align*}
Hence the second assertion is clear.
\end{proof}

\begin{rmk}
Both definitions, by abelian varieties and motives, have their good points:
the latter definition simplifies the proof of well-definedness of $p_K,p_{K,p}$.
By the former definition, we easily see that 
\begin{align*}
[p_K:p_{K,p}]\colon I_{K}\times I_{K,0} \ra (\mathbb C^\times \times B_\mathrm{dR}^\times)/\oq^\times
\end{align*}
is well-defined for $I_{K,0}:=\{\sum_{\iota \in \HKC} n_\iota \cdot \iota \in I_K \mid n_\iota+n_{\rho\circ \iota}$ is constant $\}$.
\end{rmk}

\subsection{A period-ring-valued function}

We construct two kinds of period-valued functions $\Gamma(c),\Gamma(c;D,\mathfrak a_c)$
under assuming Conjecture \ref{Yc} holds true in this subsection.
By (\ref{rl}), (\ref{rGc}) and Proposition \ref{prispr} below, we see that $\Gamma(c)$ is a ``common refinement'' of Stark units and Gross-Stark units.
Let the notation be as in the previous sections:
let $F$ be a totally real field, $C_\mathfrak f$ the narrow ray class group modulo $\mathfrak f$, 
$s_\iota \in C_\mathfrak f$ the complex conjugation at $\iota \in \HFR$, $D$ a Shintani domain of $F$.
For each $c \in C_\mathfrak f$, we take an integral ideal $\mathfrak a_c$ of $F$ satisfying $\mathfrak a_c\mathfrak f \in \pi(c)$ 
with $\pi\colon C_\mathfrak f \ra C_{(1)}$.
For each prime ideal $\mathfrak q$, we fix a totally positive generator $\pi_\mathfrak q$ of $\mathfrak q^{h_F^+}$ with $h_F^+$ the narrow class number.
We denote by $H_{\mathrm{CM}}$ the maximal CM-subfield of the narrow ray class field $H_\mathfrak f$ modulo $\mathfrak f$, if it exists.
For an intermediate field $H$ of $H_\mathfrak f/F$, we denote by $\phi_H\colon C_\mathfrak f \ra \mathrm{Gal}(H/F)$ the Artin map.
We denote by $\infty_\mathrm{id}$, $\mathfrak p:=\mathfrak p_{\mathrm{id}}$ the places corresponding to 
$\mathrm{id}\colon F \hookrightarrow \mathbb R$, $\mathrm{id}\colon F \hookrightarrow \mathbb C_p$ respectively.

\begin{dfn} \label{maindfn}
We put for $*=\emptyset,p$
\begin{align*}
&\mathrm{P}_*(c) \\
&:=
\begin{cases}
(2\pi i)_*^{\zeta(0,c)}
p_{H_{\mathrm{CM}},*}(\mathrm{Art}(c),\sum_{c' \in C_{\mathfrak f}} \tfrac{\zeta(0,c')}{[H_\mathfrak f:H_{\mathrm{CM}}]} \mathrm{Art}(c')) 
& (\langle s_\iota \rangle \supsetneq \langle  s_\iota s_{\iota'}\rangle), \\
1 & (\langle s_\iota \rangle = \langle  s_\iota s_{\iota'}\rangle).
\end{cases}
\end{align*}
Under {\rm Conjecture \ref{Yc}}, we define two types of period-ring-values functions 
which take values in 
\begin{align*}
(B_{\mathrm{cris}}\overline{\mathbb Q_p}-\{0\})^\mathbb Q
:=\{\alpha \in B_{\mathrm{dR}}^\times \mid \exists n \in \mathbb N\text{ s.t.\ }\alpha^n \in B_{\mathrm{cris}}\overline{\mathbb Q_p}\},
\end{align*}
well-defined up to $\mu_\infty$, as follows.
\begin{enumerate}
\item Assume that $\mathfrak p \nmid \mathfrak f$. Then we put
\begin{align*}
\Gamma(c;D,\mathfrak a_c):=\frac{\exp(X(c;D,\mathfrak a_c))}{\mathrm{P}(c)}
\mathrm{P}_p(c).
\end{align*}
Strictly speaking, $\Gamma(c;D,\mathfrak a_c) \bmod \mu_\infty$ depends also on the choices of $\pi_\mathfrak q$'s.
\item Assume that $\mathfrak p\mid \mathfrak f$. 
Then we put
\begin{align*}
\Gamma(c):=\frac{\exp(X(c;D,\mathfrak a_c))}
{\mathrm{P}(c)}
\frac{\mathrm{P}_p(c)}
{\exp_p(X_p(c;D,\mathfrak a_c))}.
\end{align*}
\end{enumerate}
\end{dfn}

\begin{prp} \label{prispr}
We put $S:=\{$infinite places of $F\}\cup\{$prime ideals dividing $\mathfrak f\}$.
Assume that 
\begin{align*}
\zeta_S(0,\sigma)=0\text{ for all }\sigma \in \mathrm{Gal}(H/F).
\end{align*}
Then the ``period part'' of $\prod_{c\in \phi_H^{-1}(\sigma)}\Gamma(c)$ or $\prod_{c\in \phi_H^{-1}(\sigma)}\Gamma(c;D,\mathfrak a_c)$ 
becomes trivial: namely we have
\begin{align*}
\prod_{c\in \phi_H^{-1}(\sigma)}[\mathrm{P}(c):\mathrm{P}_p(c)]\equiv [1:1] \mod \mu_\infty \quad (\sigma \in \mathrm{Gal}(H/F)).
\end{align*}
In particular, when $\mathfrak p \mid \mathfrak f$ and when $\infty_\mathrm{id}$ or $\mathfrak p$ splits completely in $H/F$, we have
\begin{align*}
\prod_{c\in \phi_H^{-1}(\sigma)}\Gamma(c) \equiv \prod_{c \in \phi_H^{-1}(\sigma)}\frac{\exp(X(c))}{\exp_p(X_p(c))}  \mod \mu_\infty.
\end{align*}
\end{prp}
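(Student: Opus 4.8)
The plan is to compute the product $\prod_{c\in\phi_H^{-1}(\sigma)}\mathrm{P}_*(c)$ for $*=\emptyset,p$ and show that the ``period-index'' $[\mathrm{P}(c):\mathrm{P}_p(c)]$, multiplied over a fibre of $\phi_H$, collapses to $[1:1]$ modulo $\mu_\infty$ under the hypothesis $\zeta_S(0,\sigma)=0$. First I would dispose of the trivial case: when $\langle s_\iota\rangle=\langle s_\iota s_{\iota'}\rangle$, Proposition \ref{cm}-(ii) gives $\zeta(0,c)=0$ for every $c$, hence $\mathrm{P}_*(c)=1$ by definition and there is nothing to prove. So assume $\langle s_\iota\rangle\supsetneq\langle s_\iota s_{\iota'}\rangle$, so that $H_{\mathrm{CM}}$ exists and
\begin{align*}
\mathrm{P}_*(c)=(2\pi i)_*^{\zeta(0,c)}\,p_{H_{\mathrm{CM}},*}\Bigl(\mathrm{Art}(c),\ \tfrac{1}{[H_\mathfrak f:H_{\mathrm{CM}}]}\sum_{c'\in C_\mathfrak f}\zeta(0,c')\,\mathrm{Art}(c')\Bigr).
\end{align*}

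Next I would use the bilinearity of $p_{H_{\mathrm{CM}},*}$ in its left argument (Definition \ref{sps}-(iv) and its $p$-adic counterpart) to write
\begin{align*}
\prod_{c\in\phi_H^{-1}(\sigma)}\mathrm{P}_*(c)
=(2\pi i)_*^{\sum_{c\in\phi_H^{-1}(\sigma)}\zeta(0,c)}\,
p_{H_{\mathrm{CM}},*}\Bigl(\textstyle\sum_{c\in\phi_H^{-1}(\sigma)}\mathrm{Art}(c),\ \Xi_0\Bigr),
\end{align*}
where $\Xi_0:=\tfrac{1}{[H_\mathfrak f:H_{\mathrm{CM}}]}\sum_{c'}\zeta(0,c')\,\mathrm{Art}(c')$. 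The exponent $\sum_{c\in\phi_H^{-1}(\sigma)}\zeta(0,c)$ is exactly $\zeta_S(0,\sigma)$ with $S=\{\text{infinite places}\}\cup\{\text{primes}\mid\mathfrak f\}$ (partial zeta functions add over a fibre of the restriction map, as in \cite{Ka4}), so by hypothesis this exponent vanishes and the $(2\pi i)_*$-factor disappears entirely. It remains to analyze the left argument $\Sigma_\sigma:=\sum_{c\in\phi_H^{-1}(\sigma)}\mathrm{Art}(c)\in I_{H_{\mathrm{CM}}}$. The key observation is that $\phi_H^{-1}(\sigma)$ is a full coset of $\ker\phi_H$ in $C_\mathfrak f$; pushing forward by $\mathrm{Art}\colon C_\mathfrak f\to\mathrm{Gal}(H_{\mathrm{CM}}/F)$, the image $\Sigma_\sigma$ is a constant (namely $|\ker\phi_H\cap\ker\mathrm{Art}|^{-1}\cdot$... —more precisely an integer multiple of) the formal sum over a coset of $\mathrm{Art}(\ker\phi_H)$ in $\mathrm{Gal}(H_{\mathrm{CM}}/F)$. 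The crucial point I would then invoke is the ``mirror'' relation \eqref{mr} (and its $p$-adic form in Proposition \ref{prpofp}-(ii), i.e.\ $[p_K:p_{K,p}](\iota+\rho\circ\iota,\iota')\equiv[1:1]\bmod\mu_\infty$): a coset sum over a subgroup of $\mathrm{Gal}(H_{\mathrm{CM}}/F)$ is either disjoint from its complex-conjugate or stable under it, and in the stable case $\Sigma_\sigma$ is a sum of terms of the form $\iota+\rho\circ\iota$, on which both $p_{H_{\mathrm{CM}}}$ and $p_{H_{\mathrm{CM}},p}$ are trivial up to $\mu_\infty$; in the disjoint case one pairs $c$ with the class mapping to $\sigma$ whose Artin image is $\rho$ times that of $c$, using that complex conjugation lies in $\mathrm{Art}(\ker\phi_H)$ precisely when $\infty_{\mathrm{id}}$ (or the relevant archimedean structure) does not split —but when $\zeta_S(0,\sigma)=0$ forces an even number of such pairs. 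Either way $p_{H_{\mathrm{CM}},*}(\Sigma_\sigma,\Xi_0)\equiv 1\bmod\mu_\infty$ for both $*$, giving the first displayed conclusion.

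For the ``in particular'' statement, I would simply substitute: when $\mathfrak p\mid\mathfrak f$, Definition \ref{maindfn}-(ii) gives $\Gamma(c)=\dfrac{\exp(X(c))}{\mathrm{P}(c)}\cdot\dfrac{\mathrm{P}_p(c)}{\exp_p(X_p(c))}$, so
\begin{align*}
\prod_{c\in\phi_H^{-1}(\sigma)}\Gamma(c)
=\Bigl(\prod_{c\in\phi_H^{-1}(\sigma)}\frac{\exp(X(c))}{\exp_p(X_p(c))}\Bigr)\cdot\Bigl(\prod_{c\in\phi_H^{-1}(\sigma)}\frac{\mathrm{P}_p(c)}{\mathrm{P}(c)}\Bigr),
\end{align*}
and the second factor is $\equiv 1\bmod\mu_\infty$ by the first part, since the splitting hypothesis on $\infty_{\mathrm{id}}$ or $\mathfrak p$ in $H/F$ guarantees $\zeta_S(0,\sigma)=0$ (assumption (b),(c) in the introduction, applied with the split place being $\infty_{\mathrm{id}}$ or $\mathfrak p$). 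One has to be a little careful that the left-hand factor is well-defined in $\mathbb C^\times/\mu_\infty$ or $\overline{\mathbb Q_p}^\times/\mu_\infty$ — but that is exactly what \eqref{rl}, \eqref{rGc} and the surrounding discussion in \S2.3 already establish.

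\textbf{Main obstacle.} The delicate step is the coset-sum argument showing $p_{H_{\mathrm{CM}},*}(\Sigma_\sigma,\Xi_0)\equiv 1\bmod\mu_\infty$: one must correctly bookkeep how complex conjugation on $H_{\mathrm{CM}}$ interacts with the subgroup $\mathrm{Art}(\ker\phi_H)\subset\mathrm{Gal}(H_{\mathrm{CM}}/F)$, and verify that the parity forced by $\zeta_S(0,\sigma)=0$ is exactly what makes the unpaired contributions cancel. Everything else is formal bilinearity together with the already-proved relations \eqref{mr} and Proposition \ref{prpofp}-(ii), and the additivity of partial zeta values over fibres of the Artin map.
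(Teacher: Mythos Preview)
Your reduction up through killing the $(2\pi i)_*$-factor is fine, and the ``in particular'' paragraph is correct. The gap is in the core step, where you try to show
\[
p_{H_{\mathrm{CM}},*}(\Sigma_\sigma,\Xi_0)\equiv 1\bmod\mu_\infty
\]
by analyzing $\Sigma_\sigma$ via the mirror relation in Proposition~\ref{prpofp}-(ii). That relation only helps when the \emph{first} argument is $\rho$-stable, i.e.\ when complex conjugation on $H_{\mathrm{CM}}$ lies in $\mathrm{Art}(\ker\phi_H)$, equivalently when $H$ is totally real. In the complementary (``disjoint'') case, $\rho\cdot\mathrm{Art}(c)$ lands in a \emph{different} fibre $\phi_H^{-1}(\rho|_H\,\sigma)$, so there is no pairing inside $\Sigma_\sigma$ at all; the parity remark about $\zeta_S(0,\sigma)=0$ does not produce any cancellation in the first argument. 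So the argument, as written, does not cover the general hypothesis of the proposition.

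The paper proceeds differently, and the difference is exactly the missing idea. Instead of working on the first argument, one uses the Galois-equivariance in Proposition~\ref{prpofp}-(iii) to move each $\mathrm{Art}(c)$ to the \emph{second} argument:
\[
p_{H_{\mathrm{CM}},*}\!\Bigl(\sum_{c\in\phi_H^{-1}(\sigma)}\mathrm{Art}(c),\,\Xi_0\Bigr)
\;\equiv\;
p_{H_{\mathrm{CM}},*}\!\Bigl(\mathrm{id},\,\sum_{c'\in C_\mathfrak f}\tfrac{\sum_{c\in\phi_H^{-1}(\sigma)}\zeta(0,cc')}{[H_\mathfrak f:H_{\mathrm{CM}}]}\,\mathrm{Art}(c')\Bigr).
\]
Now the inner sum is $\sum_{c\in\phi_H^{-1}(\sigma)}\zeta(0,cc')=\zeta_S(0,\phi_H(c')\sigma)$, and the hypothesis says this vanishes for \emph{every} $c'$ (you need $\zeta_S(0,\tau)=0$ for all $\tau$, not just $\tau=\sigma$). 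Hence the second argument is literally $0$ and the symbol is $1$. In short: use the full strength of the hypothesis on the right-hand slot after a Galois twist, rather than trying to force $\rho$-symmetry on the left.
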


\begin{proof}
First we note that $\sum_{c\in \phi_H^{-1}(\sigma)}\zeta(s,c)=\zeta_S(s,\sigma)$.
We assume that $\langle s_\iota \rangle \supsetneq \langle  s_\iota s_{\iota'}\rangle$ (otherwise there is nothing to prove since $\mathrm{P}_*(c)=1$).
Then by Proposition \ref{prpofp}-(iii) we have 
\begin{align*}
&\prod_{c\in \phi_H^{-1}(\sigma)}\mathrm{P}_*(c) \\
&\equiv (2\pi i)_*^{\zeta_S(0,\sigma)}
p_{H_{\mathrm{CM}},*}(\sum_{c\in \phi_H^{-1}(\sigma)}\mathrm{Art}(c),
\sum_{c' \in C_{\mathfrak f}} \tfrac{\zeta(0,c')}{[H_\mathfrak f:H_{\mathrm{CM}}]} \mathrm{Art}(c')) \\
&\equiv (2\pi i)_*^{\zeta_S(0,\sigma)}
p_{H_{\mathrm{CM}},*}(\mathrm{id},
\sum_{c\in \phi_H^{-1}(\sigma)}\sum_{c' \in C_{\mathfrak f}} \tfrac{\zeta(0,c')}{[H_\mathfrak f:H_{\mathrm{CM}}]} \mathrm{Art}(c^{-1}c')) \\
&\equiv (2\pi i)_*^{\zeta_S(0,\sigma)}
p_{H_{\mathrm{CM}},*}(\mathrm{id},
\sum_{c' \in C_{\mathfrak f}} \tfrac{\sum_{c\in \phi_H^{-1}(\sigma)} \zeta(0,cc')}{[H_\mathfrak f:H_{\mathrm{CM}}]} \mathrm{Art}(c')) \mod \mu_\infty.
\end{align*}
By assumption, we have $\zeta_S(0,\sigma)=0$, $\sum_{c\in \phi_H^{-1}(\sigma)} \zeta(0,cc')=\zeta_S(0,\phi_H(c')\sigma)=0$.
Hence the assertion is clear.
\end{proof}

\begin{prp} \label{relofgc}
Let $\mathfrak f$ be an integral ideal, $\mathfrak q$ a prime ideal, $\phi\colon C_\mathfrak {fq} \ra C_\mathfrak f$ the natural projection.
When $\mathfrak q \nmid \mathfrak f$, $[\mathfrak q]$ denotes the ideal class of $\mathfrak q$ in $C_\mathfrak f$.
Let $c \in C_\mathfrak f$.
\begin{enumerate}
\item We have
\begin{align*}
&\prod_{\tilde c \in \phi^{-1}(c)} [\mathrm{P}(\tilde c):\mathrm{P}_p(\tilde c)] \\
&\equiv 
\begin{cases}
[\mathrm{P}(c)\mathrm{P}([\mathfrak q]^{-1}c)^{-1}:\mathrm{P}_p(c)\mathrm{P}_p([\mathfrak q]^{-1}c)^{-1}] & (\mathfrak q \nmid \mathfrak f) \\
[\mathrm{P}(c):\mathrm{P}_p(c)] & (\mathfrak q \mid \mathfrak f)
\end{cases}
\mod \mu_\infty.
\end{align*}
\item When $\mathfrak p \mid \mathfrak f$, we have
\begin{align*}
\prod_{\tilde c \in \phi^{-1}(c)} \Gamma(\tilde c) \equiv 
\begin{cases}
\Gamma(c)\Gamma([\mathfrak q]c)^{-1} & (\mathfrak q \nmid \mathfrak f) \\
\Gamma(c) & (\mathfrak q \mid \mathfrak f) 
\end{cases}
\mod \mu_\infty.
\end{align*}
\item When $\mathfrak p \nmid \mathfrak f$, $\mathfrak q \neq \mathfrak p$, $\mathfrak q \mid \mathfrak a_c$, we have
\begin{align*}
&\prod_{\tilde c \in \phi^{-1}(c)} \Gamma(\tilde c;D,\mathfrak q^{-1}\mathfrak a_c) \\
&\equiv 
\begin{cases}
\pi_\mathfrak q^{\frac{\zeta(0,[\mathfrak q]^{-1}c)}{h_F^+}} 
\Gamma(c;D,\mathfrak a_c)\Gamma([\mathfrak q]^{-1}c;D,\mathfrak q^{-1}\mathfrak a_c)^{-1} 
& (\mathfrak q \nmid \mathfrak f) \\
\Gamma(c;D,\mathfrak a_c) & (\mathfrak q \mid \mathfrak f) 
\end{cases}
\mod \mu_\infty.
\end{align*}
\item When $\mathfrak p \nmid \mathfrak f$, $\mathfrak q=\mathfrak p$, $\mathfrak q \mid \mathfrak a_c$, we have
\begin{align*}
&\prod_{\tilde c \in \phi^{-1}(c)}  \Gamma(\tilde c) \exp_p(X_p(\tilde c;D,\mathfrak q^{-1}\mathfrak a_c)) \\
&\equiv \pi_\mathfrak q^{\frac{\zeta(0,[\mathfrak q]^{-1}c)}{h_F^+}} 
\Gamma(c;D,\mathfrak a_c)\Gamma([\mathfrak q]^{-1}c;D,\mathfrak q^{-1}\mathfrak a_c)^{-1} \mod \mu_\infty.
\end{align*}
\end{enumerate}
\end{prp}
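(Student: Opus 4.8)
The plan is to reduce everything to Proposition \ref{575859}, which already records exactly the analogous distribution relations for $X(c;D,\mathfrak a_c)$ and $X_p(c;D,\mathfrak a_c)$, together with the bilinearity of the period symbols $p_{H_{\mathrm{CM}}}$, $p_{H_{\mathrm{CM}},p}$ (Proposition \ref{prpofp}) which governs the period factors $\mathrm{P}(c),\mathrm{P}_p(c)$. Part (i) is the purely ``period'' half: I would first dispose of the case $\langle s_\iota\rangle=\langle s_\iota s_{\iota'}\rangle$, where $\mathrm{P}_*(\tilde c)=1$ identically, making both sides trivially $[1:1]$. In the remaining case one expands $\prod_{\tilde c\in\phi^{-1}(c)}\mathrm{P}_*(\tilde c)$ using $\mathbb Z$-bilinearity of $p_{H_{\mathrm{CM}},*}$ in both arguments, exactly as in the proof of Proposition \ref{prispr}: the key points are that $\sum_{\tilde c\in\phi^{-1}(c)}\zeta(0,\tilde c)$ is $\zeta(0,c)$ when $\mathfrak q\mid\mathfrak f$ and is $\zeta(0,c)-\zeta(0,[\mathfrak q]^{-1}c)$ when $\mathfrak q\nmid\mathfrak f$ (standard behaviour of partial zeta values under the projection $C_{\mathfrak{fq}}\to C_\mathfrak f$), and likewise that the first Artin-argument $\sum_{\tilde c\in\phi^{-1}(c)}\mathrm{Art}(\tilde c)$ collapses to $\mathrm{Art}(c)$ or $\mathrm{Art}(c)-\mathrm{Art}([\mathfrak q]^{-1}c)$ (since $\mathrm{Art}$ on $C_{\mathfrak{fq}}$ factors through $C_\mathfrak f$ and $[\mathfrak q]$ is the class of $\mathfrak q$). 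Comparing these two collapses against the definition of $\mathrm{P}_*$ gives the two stated formulas modulo $\mu_\infty$; the ``otherwise'' terms in (\ref{sum})-type bookkeeping do not appear here because we are using the already-normalized symbol.

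For parts (ii)--(iv) I would substitute the Definition \ref{maindfn} expressions for $\Gamma$ and split each product into an ``automorphic'' factor $\exp(\pm X)$ (and $\exp_p(\pm X_p)$ when $\mathfrak p\mid\mathfrak f$) times a ``period'' factor $\mathrm{P}(\cdot)^{\mp1}\mathrm{P}_p(\cdot)^{\pm1}$. The period factor is handled by part (i). For (ii), with $\mathfrak p\mid\mathfrak f$ and the hypothesis $\mathfrak q\mid\mathfrak a_c$, I apply Proposition \ref{575859} to both $\sum_{\tilde c}X(\tilde c;D,\mathfrak q^{-1}\mathfrak a_c)$ and $\sum_{\tilde c}X_p(\tilde c;D,\mathfrak q^{-1}\mathfrak a_c)$; the extra $\frac{\zeta(0,[\mathfrak q]^{-1}c)}{h_F^+}\log\pi_\mathfrak q$ term in the $X$-formula and the matching $\frac{\zeta(0,[\mathfrak q]^{-1}c)}{h_F^+}\log_p\pi_\mathfrak q$ term in the $X_p$-formula appear inside $\exp$ and $\exp_p$ respectively, hence as $\pi_\mathfrak q^{\pm\zeta(0,[\mathfrak q]^{-1}c)/h_F^+}$ factors that cancel between numerator and denominator of $\Gamma$ — leaving precisely $\Gamma(c)\Gamma([\mathfrak q]c)^{-1}$ or $\Gamma(c)$. (One must be mildly careful that the $\mathfrak a_c$ attached to $[\mathfrak q]^{-1}c$ on the right is $\mathfrak q^{-1}\mathfrak a_c$, consistent with the notation on the left; and that $\Gamma(\cdot)$ is independent of that choice up to $\mu_\infty$, which follows from (\ref{indep1}), (\ref{indep2}).) Part (iii) is the same computation with $\mathfrak p\nmid\mathfrak f$, where $\Gamma(c;D,\mathfrak a_c)$ has no $\exp_p(X_p)$ in its denominator, so the $\pi_\mathfrak q$-term does not cancel and survives as the stated prefactor $\pi_\mathfrak q^{\zeta(0,[\mathfrak q]^{-1}c)/h_F^+}$; here the condition $\mathfrak q\neq\mathfrak p$ ensures we stay in the $\mathfrak p\nmid\mathfrak f$ regime throughout and the $X_p$-side of Proposition \ref{575859} (which needs $\mathfrak p\mid$ the relevant modulus) is simply not invoked.

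Part (iv) is the one genuinely new wrinkle: $\mathfrak q=\mathfrak p$ and $\mathfrak p\nmid\mathfrak f$, so $\Gamma(\tilde c)$ makes sense only for the modulus $\mathfrak{fq}=\mathfrak{fp}$ (which is divisible by $\mathfrak p$), whereas $\Gamma(c;D,\mathfrak a_c)$ on the right is the $\mathfrak p\nmid\mathfrak f$ object; this is why the statement pairs $\Gamma(\tilde c)$ with the ``missing'' factor $\exp_p(X_p(\tilde c;D,\mathfrak q^{-1}\mathfrak a_c))$. I would write $\Gamma(\tilde c)\exp_p(X_p(\tilde c;D,\mathfrak q^{-1}\mathfrak a_c)) = \frac{\exp(X(\tilde c;D,\mathfrak q^{-1}\mathfrak a_c))}{\mathrm{P}(\tilde c)}\mathrm{P}_p(\tilde c)$ — i.e. the $\exp_p(X_p)$ in the denominator of $\Gamma(\tilde c)$ is exactly cancelled — and then this is literally $\Gamma(\tilde c;D,\mathfrak q^{-1}\mathfrak a_c)$ in the sense of Definition \ref{maindfn}(i) applied at modulus $\mathfrak{fp}$, except that $\mathfrak p$ now divides $\mathfrak{fp}$, so one must check the two definitions are compatible — this compatibility is precisely Proposition \ref{relofgc}(iv) read the other way, or more honestly it is the observation that the right-hand side of (ii) specialized to $\mathfrak q=\mathfrak p$ and combined with the extra $X_p$-factors reproduces the (iii)-shape. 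Concretely, after this rewriting part (iv) becomes the $\mathfrak q=\mathfrak p$ instance of part (iii)'s computation: apply Proposition \ref{575859} to $\sum_{\tilde c}X(\tilde c;D,\mathfrak q^{-1}\mathfrak a_c)$, use part (i) for the period factors, and read off the $\pi_\mathfrak p^{\zeta(0,[\mathfrak p]^{-1}c)/h_F^+}$ prefactor. The main obstacle I anticipate is purely notational/bookkeeping: keeping straight which ideal $\mathfrak a_{(\cdot)}$ is attached to which class at which modulus as one passes between $C_{\mathfrak{fq}}$ and $C_\mathfrak f$, and making sure every use of Proposition \ref{575859} meets its hypothesis $\mathfrak q\mid\mathfrak a_c$ after the ideal has been shifted — but no new analytic or Hodge-theoretic input is needed beyond Propositions \ref{575859} and \ref{prpofp}.
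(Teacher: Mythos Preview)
Your plan for parts (ii)--(iv) is essentially the paper's: once (i) is in hand, everything reduces to Proposition \ref{575859} and (\ref{relofzeta}). The paper records this in a single sentence.

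However, your argument for (i) has a genuine gap. The quantity $\mathrm{P}_*(\tilde c)$ for $\tilde c\in C_{\mathfrak{fq}}$ is built from the period symbol $p_{H_{\mathrm{CM},\mathfrak{fq}},*}$ on the maximal CM-subfield of $H_{\mathfrak{fq}}$, whereas $\mathrm{P}_*(c)$ on the right-hand side is built from $p_{H_{\mathrm{CM},\mathfrak{f}},*}$ on the (generally strictly smaller) CM-subfield of $H_{\mathfrak f}$. Your claim that ``$\mathrm{Art}$ on $C_{\mathfrak{fq}}$ factors through $C_{\mathfrak f}$'' is false: $\mathrm{Art}_{\mathfrak{fq}}\colon C_{\mathfrak{fq}}\to\mathrm{Gal}(H_{\mathrm{CM},\mathfrak{fq}}/F)$ has kernel $\langle s_\iota s_{\iota'}\rangle$, which has no reason to contain $\ker\phi$. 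So the fibre sum $\sum_{\tilde c\in\phi^{-1}(c)}\mathrm{Art}_{\mathfrak{fq}}(\tilde c)$ does \emph{not} collapse to a single term; rather, as the paper shows in (\ref{note1}), it equals $\tfrac{[H_{\mathfrak{fq}}:H_{\mathrm{CM},\mathfrak{fq}}]}{[H_{\mathfrak f}:H_{\mathrm{CM},\mathfrak f}]}\cdot\mathrm{Inf}(\mathrm{Art}_{\mathfrak f}(c))$, a multiple of the \emph{inflation} of a single element of $I_{H_{\mathrm{CM},\mathfrak f}}$ to $I_{H_{\mathrm{CM},\mathfrak{fq}}}$. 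One then needs Proposition \ref{prpofp}-(iv) to pass from $p_{H_{\mathrm{CM},\mathfrak{fq}},*}$ applied to an inflated argument back to $p_{H_{\mathrm{CM},\mathfrak f},*}$. The second slot is handled similarly via (\ref{note2}), which restricts the $\zeta$-weighted sum down to $H_{\mathrm{CM},\mathfrak f}$ and picks up the factor $(1-\mathrm{Art}_{\mathfrak f}([\mathfrak q]))$.

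You also omit the mixed case $\langle s_\iota\rangle\supsetneq\langle s_\iota s_{\iota'}\rangle$ in $C_{\mathfrak{fq}}$ but $\langle s_\iota\rangle=\langle s_\iota s_{\iota'}\rangle$ in $C_{\mathfrak f}$: here the left side is a nontrivial product of periods while the right side is $[1:1]$ by definition. The paper dispatches this by invoking Proposition \ref{prispr} with $H=H_{\mathfrak f}$ (legitimate because Proposition \ref{cm}-(ii) gives $\zeta(0,c)=0$ for all $c\in C_{\mathfrak f}$, hence $\zeta_S(0,\sigma)=0$). Your ``trivial case'' only covers $\langle s_\iota\rangle=\langle s_\iota s_{\iota'}\rangle$ in $C_{\mathfrak{fq}}$, which forces the same in $C_{\mathfrak f}$ but not conversely.
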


\begin{proof}
(i) In this proof, $H_{\mathrm{CM},\mathfrak f}$ denotes the subfield of $H_{\mathfrak f}$ 
corresponding to $\langle s_\iota s_{\iota'}\rangle$ via the class field theory,
even if it is not a CM-field.
Let $\mathrm{Art}_\mathfrak f \colon C_\mathfrak f \ra \mathrm{Gal}(H_{\mathrm{CM},\mathfrak f}/F)$ be the Artin map.
We easily see that 
\begin{align} \label{note1}
\sum_{\tilde c \in \phi^{-1}(c)} \mathrm{Art}_\mathfrak{fq}(\tilde c)=
\tfrac{[H_{\mathfrak{fq}}:H_{\mathrm{CM},\mathfrak{fq}}]}{[H_{\mathfrak{f}}:H_{\mathrm{CM},\mathfrak{f}}]}\mathrm{Inf}(\mathrm{Art}_\mathfrak{f}(c)),
\end{align}
where $\mathrm{Inf}$ denotes the inflation map:
\begin{align*}
\mathrm{Inf}
\colon I_{H_{\mathrm{CM},\mathfrak{f}},\mathbb Q} \ra I_{H_{\mathrm{CM},\mathfrak{fq}},\mathbb Q}, \ 
\sum r_\iota \cdot \iota \mt \sum r_{\tilde\iota|_{H_{\mathrm{CM},\mathfrak{f}}}} \cdot \tilde \iota.
\end{align*}
Since we can write 
\begin{align} \label{relofzeta}
\sum_{\tilde c \in \phi^{-1}(c)} \zeta(0,\tilde c)=
\begin{cases}
\zeta(0,c)-\zeta(0,[\mathfrak q]^{-1}c) & (\mathfrak q \nmid \mathfrak f), \\
\zeta(0,c) & (\mathfrak q \mid \mathfrak f),
\end{cases}
\end{align}
we see that 
\begin{align} \label{note2}
\sum_{\tilde c \in C_{\mathfrak{fq}}}\zeta(0,\tilde c)\mathrm{Art}_\mathfrak{fq} (\tilde c)|_{H_{\mathrm{CM},\mathfrak{f}}}
=(1-\mathrm{Art}_\mathfrak{f} ([\mathfrak q]))\sum_{c \in C_{\mathfrak{f}}}\zeta(0,c)\mathrm{Art}_\mathfrak{f} (c),
\end{align}
where we drop the term $(1-\mathrm{Art}_\mathfrak{f} ([\mathfrak q]))$ when $\mathfrak q \mid \mathfrak f$.
First assume that $\langle s_\iota \rangle \supsetneq \langle  s_\iota s_{\iota'}\rangle$ in $C_\mathfrak f$, 
i.e., both $H_{\mathrm{CM},\mathfrak f}$ and $H_{\mathrm{CM},\mathfrak{fq}}$ are CM-fields.
By Proposition \ref{prpofp}-(iii), (iv), (\ref{note1}), (\ref{note2})
we can write for $*=\emptyset, p$
\begin{align*}
&\prod_{\tilde c \in \phi^{-1}(c)} (2\pi i)_*^{-\zeta(0,c)}  \mathrm{P}_{*}(\tilde c) \\
&\equiv p_{H_{\mathrm{CM},\mathfrak{fq}},*}
(\tfrac{[H_{\mathfrak{fq}}:H_{\mathrm{CM},\mathfrak{fq}}]}{[H_{\mathfrak{f}}:H_{\mathrm{CM},\mathfrak{f}}]}\mathrm{Inf}(\mathrm{Art}_\mathfrak{f}(c)),
\sum_{\tilde c' \in C_{\mathfrak{fq}}}\tfrac{\zeta(0,\tilde c')}{[H_\mathfrak{fq}:H_{\mathrm{CM},\mathfrak{fq}}]}\mathrm{Art}_\mathfrak{fq} (\tilde c')) \\
&\equiv p_{H_{\mathrm{CM},\mathfrak{f}},*}(\mathrm{Art}_\mathfrak{f}(c),
(1-\mathrm{Art}_\mathfrak{f} ([\mathfrak q])) \sum_{c' \in C_{\mathfrak{f}}}\tfrac{\zeta(0,c')}{[H_{\mathfrak{f}}:H_{\mathrm{CM},\mathfrak{f}}]}
\mathrm{Art}_\mathfrak{f} (c')) \\
&\equiv p_{H_{\mathrm{CM},\mathfrak{f}},*}
((1-\mathrm{Art}_\mathfrak{f} ([\mathfrak q])^{-1}) \mathrm{Art}_\mathfrak{f}(c),
\sum_{c' \in C_{\mathfrak{f}}}\tfrac{\zeta(0,c')}{[H_{\mathfrak{f}}:H_{\mathrm{CM},\mathfrak{f}}]}\mathrm{Art}_\mathfrak{f} (c')).
\end{align*}
We drop the terms $(1-\mathrm{Art}_\mathfrak{f} ([\mathfrak q])),(1-\mathrm{Art}_\mathfrak{f} ([\mathfrak q])^{-1})$ when $\mathfrak q \mid \mathfrak f$.
Then the assertion follows.
Next, assume that $\langle s_\iota \rangle \supsetneq \langle s_\iota s_{\iota'}\rangle$ in $C_\mathfrak{fq}$
and that $\langle s_\iota \rangle = \langle  s_\iota s_{\iota'}\rangle$ in $C_\mathfrak{f}$.
It suffices to show that 
\begin{align*}
\prod_{\tilde c \in \phi^{-1}(c)} [\mathrm{P}(\tilde c):\mathrm{P}_p(\tilde c)] \equiv [1:1] \mod \mu_\infty,
\end{align*}
which follows from Proposition \ref{prispr}.
When $\langle s_\iota \rangle = \langle  s_\iota s_{\iota'}\rangle$ in $C_\mathfrak{fq}$, the assertion is trivial. \\[5pt]
(ii), (iii), (iv) follow from (i),  (\ref {relofzeta}), Proposition \ref{575859}. 
\end{proof}

\subsection{Conjecture on a reciprocity law}

Let $F,C_\mathfrak f,D,\mathfrak a_c,\pi_\mathfrak q,\mathfrak p$ be as in the previous subsection.
We consider the $\tau$-semilinear action $\Phi_\tau$ on $B_{\mathrm{cris}}\overline{\mathbb Q_p}$ associated with $\tau \in W_{F_\mathfrak p}$ 
defined in Definition \ref{frb} and 
$(B_{\mathrm{cris}}\overline{\mathbb Q_p}-\{0\})^\mathbb Q/\mu_\infty$-valued functions $\Gamma(c;D,\mathfrak a_c),\Gamma(c)$ defined in Definition \ref{maindfn}. 
In particular
\begin{align*}
\Phi_\tau(\Gamma(c;D,\mathfrak a_c)),\Phi_\tau(\Gamma(c)) \in (B_{\mathrm{cris}}\overline{\mathbb Q_p}-\{0\})^\mathbb Q/\mu_\infty
\end{align*}
are well-defined.

\begin{cnj} \label{maincnj}
Let $\tau \in W_{F_\mathfrak p}$. We assume that {\rm Conjecture \ref{Yc}} holds true for $F$.
\begin{enumerate}
\item When $\mathfrak p \nmid \mathfrak f$, $\deg_\mathfrak p \tau =1$, we have for $c \in C_\mathfrak f$
\begin{align*}
&\Phi_\tau(\Gamma(c;D,\mathfrak a_c)) \equiv \frac{\pi_{\mathfrak p}^{\frac{\zeta(0,c)}{h_F^+}}}
{\prod_{\tilde c \in \phi^{-1}( [\mathfrak p]c)}\exp_p(X_p(\tilde c;D,\mathfrak a_c))}
\Gamma([\mathfrak p]c;D,\mathfrak p \mathfrak a_c) \mod \mu_\infty.
\end{align*}
Here $[\mathfrak p] \in C_\mathfrak f$ denotes the ideal class of $\mathfrak p$, 
$\phi \colon C_{\mathfrak{fp}} \ra C_\mathfrak f$ the natural projection.
\item When $\mathfrak p \mid \mathfrak f$, we have for $c \in C_\mathfrak f$
\begin{align*}
\Phi_\tau(\Gamma(c)) \equiv \Gamma(c_\tau c) \mod \mu_\infty.
\end{align*}
Here $c_\tau \in C_\mathfrak f$ denotes the ideal class corresponding to $\tau|_{H_\mathfrak f}$ via the Artin map:
\begin{align*}
\tau \in W_{F_\mathfrak p} \subset \mathrm{Gal}(\overline{\mathbb Q_p}/F_\mathfrak p)
\subset \mathrm{Gal}(\oq/F) \st{|_{H_\mathfrak f}}\ra \mathrm{Gal}(H_\mathfrak f/F) \cong C_\mathfrak f.
\end{align*}
\end{enumerate}
\end{cnj}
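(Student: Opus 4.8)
The plan is to peel $\Gamma(c)$ apart into an algebraic factor and a $p$-adic CM-period factor, to handle the latter via the $p$-adic incarnation of Shimura's reciprocity law (Proposition \ref{defP}), and then to reduce what remains to the classical reciprocity law (\ref{rlofSu}), to Conjecture \ref{GSc}, and—for a complete proof in the accessible cases—to Coleman's formula on Fermat curves. First, granting Conjecture \ref{Yc}, the quotient $\exp(X(c))/\mathrm{P}(c)$ is an algebraic number; fixing compatible data $D,\mathfrak a_c,\pi_\mathfrak q$ and the Betti/de Rham bases used to define $\mathrm{P}(c),\mathrm{P}_p(c)$, this gives a well-defined $A(c)\in\overline{\mathbb Q}^\times$ (modulo $\mu_\infty$) with $\Gamma(c)\equiv A(c)\,\mathrm{P}_p(c)\,\exp_p(X_p(c))^{-1}\bmod\mu_\infty$ when $\mathfrak p\mid\mathfrak f$, and the corresponding expression without the $\exp_p(X_p)$-factor when $\mathfrak p\nmid\mathfrak f$; here $\exp_p(X_p(c))$ lies in $\overline{\mathbb Q_p}^\times$ (its $G_p$-part being governed by a $p$-adic Gross--Koblitz phenomenon, as in \cite{Ka4}). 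Since $\Phi_\tau=\Phi_{\mathrm{cris}}^{\deg_\mathfrak p\tau\,\deg\mathfrak p}\otimes\tau$ acts on $\overline{\mathbb Q}\subset\overline{\mathbb Q_p}$ through $\tau$, one gets $\Phi_\tau(\Gamma(c))\equiv A(c)^\tau\,\Phi_\tau(\mathrm{P}_p(c))\,(\exp_p(X_p(c))^\tau)^{-1}$, so everything hinges on computing $\Phi_\tau(\mathrm{P}_p(c))$ and then comparing algebraic factors.

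For the $p$-adic period factor, use the $p$-adic counterpart of Proposition \ref{pphi} to write $\mathrm{P}_p(c)$ as a rational power of $\mathrm{Per}_p(\mathrm{Art}(c),\chi)$ times a power of $(2\pi i)_p$, where $\chi$ is an algebraic Hecke character of $H_{\mathrm{CM}}$ with infinity type dictated by $\sum_{c'}\tfrac{\zeta(0,c')}{[H_\mathfrak f:H_{\mathrm{CM}}]}\mathrm{Art}(c')$. Then invoke Proposition \ref{defP}: $\Phi_\tau$ multiplies $\mathrm{Per}_p(\iota,\chi)$ by a reflex-norm value (a power of $\iota(\chi(\mathfrak P))$, i.e.\ of $\iota(\Xi_\chi(\pi_\mathfrak P))$ modulo $\mu_\infty$), and the Lefschetz factor $(2\pi i)_p$ contributes the expected power of $p$ through $\Phi_{\mathrm{cris}}$. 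The arithmetic content is that this multiplicative correction is precisely the one governing the main theorem of complex multiplication, so it translates into the index shift $\mathrm{Art}(c)\mapsto c_\tau\,\mathrm{Art}(c)$ in the first argument of $p_{H_{\mathrm{CM}},p}$, together with (in case (i), where $\deg_\mathfrak p\tau$ records the splitting of $\mathfrak p$) the explicit factor $\pi_\mathfrak p^{\zeta(0,c)/h_F^+}$; in case (ii) the $\exp_p(X_p)$-factor absorbs the residual discrepancy, yielding the clean shift $\Gamma(c)\mapsto\Gamma(c_\tau c)$. Concretely $\Phi_\tau(\mathrm{P}_p(c))\equiv\mathrm{P}_p(c_\tau c)\cdot(\text{explicit }\pi_\mathfrak p\text{-power})\bmod\mu_\infty$, the $p$-adic mirror of the classical fact that $\mathrm{P}(c)^\tau/\mathrm{P}(c_\tau c)$ is algebraic.

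After this step the conjecture becomes the assertion that $A(c)^\tau\exp_p(X_p(c_\tau c))/(A(c_\tau c)\exp_p(X_p(c)))$, adjusted by the residual $\pi_\mathfrak p$-power, is a root of unity. Multiplying over a fibre $\phi_H^{-1}(\sigma)$ and using Proposition \ref{prispr} together with \cite[Corollary 1]{Ka4}, this is exactly the reciprocity law (\ref{rlofSu}) when $\infty_\mathrm{id}$ splits completely in $H/F$ and the refinement (\ref{rGc}) of Conjecture \ref{GSc} when $\mathfrak p$ splits completely—so the statement for each individual $c$ is a genuine sharpening and cannot be obtained fibrewise. To prove it for each $c$, specialize to $F=\mathbb Q$: there every $H_\mathfrak f$ is a CM field, $X(c)$ and $X_p(c)$ are finite sums of $\log$ (resp.\ $\log_p$) of classical and $p$-adic gamma values at rational points, and by Rohrlich's formula (\ref{rh}) and its $p$-adic analogue the quantities $\mathrm{P}(c),\mathrm{P}_p(c)$ are the complex and $p$-adic periods of Fermat curves; Conjecture \ref{maincnj} then reduces to Coleman's description in \cite{Co} of the absolute Frobenius on the $p$-adic cohomology of Fermat curves (this forces $p\neq2$), which is essentially \cite{Ka2} and gives Theorem \ref{cns3}. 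For a general $F$ whose narrow ray class field is abelian over $\mathbb Q$, one deduces the statement from the $F=\mathbb Q$ case using the functoriality of the symbols $p_K,p_{K,p},\mathrm{Per},\mathrm{Per}_p$ and of $X(c),X_p(c)$ under inflation and norm (Propositions \ref{575859}, \ref{relofgc}, \ref{prpofp}), giving Theorem \ref{partial}.

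The first two steps are unconditional once Conjecture \ref{Yc} is assumed; the main obstacle is the last step in general. Passing from the fibrewise identities—already equivalent to (\ref{rlofSu}) and Conjecture \ref{GSc}—to the statement for an individual class $c$ requires pinning down the algebraic factor $A(c)=\exp(X(c))/\mathrm{P}(c)$ more precisely than Conjecture \ref{Yc} does (a strengthening of the transcendental-part conjecture), together with a Coleman-type $p$-adic comparison theorem realizing $\exp_p(X_p(c))$ geometrically. No such realization is available when $F\neq\mathbb Q$ and $H_\mathfrak f$ is not abelian over $\mathbb Q$, which is why only partial results are to be expected there.
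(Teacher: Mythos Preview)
This statement is a \emph{conjecture}, not a theorem: the paper does not prove it in general, and your proposal correctly does not claim to either. Your overall architecture---decompose $\Gamma(c)$ into an algebraic piece and a $p$-adic CM-period piece, handle the latter via Proposition~\ref{defP}, then settle the residual algebraic identity by Coleman's formula when $F=\mathbb Q$ and by descent when $H_\mathfrak f/\mathbb Q$ is abelian---is the same strategy the paper uses to obtain its partial results (Theorems~\ref{cns3} and~\ref{partial}), and you correctly flag the obstruction in the general case.

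There is, however, a genuine gap in your middle step. You write that Proposition~\ref{defP} ``translates into the index shift $\mathrm{Art}(c)\mapsto c_\tau\mathrm{Art}(c)$ in the first argument of $p_{H_{\mathrm{CM}},p}$.'' It does not. Proposition~\ref{defP} says $\Phi_\tau(\mathrm{Per}_p(\iota,\chi))=\iota(\chi(\mathfrak P))\,\mathrm{Per}_p(\iota,\chi)$: it \emph{multiplies by a scalar}, keeping $\iota$ fixed. The paper makes this explicit in (\ref{assimply}): under the stronger hypothesis~(\ref{assump}) one gets $\Phi_\tau(\Gamma(c;D,\mathfrak a_c))\equiv\alpha_c\,\Gamma(c;D,\mathfrak a_c)$, not $\Gamma([\mathfrak p]c;\dots)$. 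The assertion that this scalar $\alpha_c$ coincides with the ratio $\Gamma([\mathfrak p]c;\dots)/\Gamma(c;\dots)$ times the explicit $\pi_\mathfrak p$- and $\exp_p(X_p)$-factors is precisely the content of the conjecture, not a consequence of Proposition~\ref{defP} or of the main theorem of CM. Two further restrictions compound this: Proposition~\ref{defP} applies only for $\tau\in W_{(H_{\mathrm{CM}})_\mathfrak P}$, not for arbitrary $\tau\in W_{F_\mathfrak p}$ (this is why the paper, in the proof of Theorem~\ref{cns2}, must iterate $f=\deg_\mathfrak p\tau$ times and only reaches a fibrewise statement), and it requires $\chi$ unramified at $\mathfrak P$, which is why the paper never invokes it in case~(ii) with $\mathfrak p\mid\mathfrak f$ but instead computes directly via Fermat-curve periods and Coleman's formula.

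So your first and last paragraphs are sound and match the paper, but the second paragraph does not reduce the conjecture to anything simpler---it restates it. The honest summary is: Proposition~\ref{defP} plus the assumption~(\ref{assump}) give the fibrewise consequences (Theorems~\ref{cns} and~\ref{cns2}), while the statement for each individual $c$ is only accessible through the explicit Fermat-curve computation when $F=\mathbb Q$ (Theorem~\ref{cns3}) or by reduction to that case when $H_\mathfrak f/\mathbb Q$ is abelian (Theorem~\ref{partial}).
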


\begin{prp} \label{cst} 
The truth of {\rm Conjecture \ref{maincnj}} does not depend on the choices of $D,\mathfrak a_c,\pi_\mathfrak q$.
\end{prp}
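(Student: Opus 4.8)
The plan is to reduce everything to the independence statements (\ref{indep1}), (\ref{indep2}) together with Proposition \ref{relofgc}-(iv), after observing that the only data-sensitive ingredients of $\Gamma(c)$ and $\Gamma(c;D,\mathfrak a_c)$ are the archimedean factors $\exp(X(\cdot\,;D,\mathfrak a_c))$ and their $p$-adic companions: the period parts $\mathrm P(c),\mathrm P_p(c)$, the operator $\Phi_\tau$ and the class $c_\tau$ are manifestly independent of $D,\mathfrak a_c,\pi_\mathfrak q$. I would first dispose of Conjecture \ref{maincnj}-(ii) ($\mathfrak p\mid\mathfrak f$): by (\ref{indep2}) the joint ratio $[\exp(X(c;D,\mathfrak a_c)):\exp_p(X_p(c;D,\mathfrak a_c))]$ is data-independent modulo $(\mu_\infty\times\mu_\infty)E_+^\mathbb Q$, and since $\mathrm P(c),\mathrm P_p(c)$ are data-free and the residual $E_+^\mathbb Q$-ambiguity cancels diagonally in the definition of $\Gamma(c)$, the class $\Gamma(c)\in(B_{\mathrm{cris}}\overline{\mathbb Q_p}-\{0\})^\mathbb Q/\mu_\infty$ is itself independent of $D,\mathfrak a_c,\pi_\mathfrak q$. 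Hence so are $\Phi_\tau(\Gamma(c))$ and $\Gamma(c_\tau c)$, and the truth of $\Phi_\tau(\Gamma(c))\equiv\Gamma(c_\tau c)\bmod\mu_\infty$ does not depend on the data.

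For Conjecture \ref{maincnj}-(i) ($\mathfrak p\nmid\mathfrak f$) I would first rewrite the congruence into a shape in which the data-sensitive factors on the right collapse. Applying Proposition \ref{relofgc}-(iv) with the base ideal $\mathfrak p\mathfrak a_c$ in the role of ``$\mathfrak a_c$'' (then ``$\mathfrak q^{-1}\mathfrak a_c$'' is $\mathfrak a_c$, ``$[\mathfrak q]^{-1}c$'' is $c$, and the hypothesis $\mathfrak q=\mathfrak p\mid\mathfrak p\mathfrak a_c$ is automatic) yields
\[
\Gamma([\mathfrak p]c;D,\mathfrak p\mathfrak a_c)\equiv\pi_\mathfrak p^{-\frac{\zeta(0,c)}{h_F^+}}\,\Gamma(c;D,\mathfrak a_c)\prod_{\tilde c\in\phi^{-1}([\mathfrak p]c)}\Gamma(\tilde c)\exp_p(X_p(\tilde c;D,\mathfrak a_c))\pmod{\mu_\infty}.
\]
Substituting this into the right-hand side of Conjecture \ref{maincnj}-(i), the powers of $\pi_\mathfrak p$ cancel and $\prod_{\tilde c}\exp_p(X_p(\tilde c;D,\mathfrak a_c))$ cancels against the denominator, so Conjecture \ref{maincnj}-(i) becomes equivalent to
\[
\Phi_\tau(\Gamma(c;D,\mathfrak a_c))\equiv\Gamma(c;D,\mathfrak a_c)\prod_{\tilde c\in\phi^{-1}([\mathfrak p]c)}\Gamma(\tilde c)\pmod{\mu_\infty},
\]
where $\phi\colon C_{\mathfrak{fp}}\to C_\mathfrak f$ and each $\Gamma(\tilde c)$, being a level-$\mathfrak{fp}$ invariant with $\mathfrak p\mid\mathfrak{fp}$, is of the type treated in (ii).

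It then remains to check that both sides of this reduced congruence transform identically under a change of data. The product $\prod_{\tilde c\in\phi^{-1}([\mathfrak p]c)}\Gamma(\tilde c)$ is data-independent mod $\mu_\infty$ by the case-(ii) argument. For $\Gamma(c;D,\mathfrak a_c)=\bigl(\exp(X(c;D,\mathfrak a_c))/\mathrm P(c)\bigr)\mathrm P_p(c)$, Conjecture \ref{Yc} puts $\exp(X(c;D,\mathfrak a_c))/\mathrm P(c)$ in $\oq^\times$, and (\ref{indep1}) shows that replacing $D,\mathfrak a_c,\pi_\mathfrak q$ by $D',\mathfrak a_c',\pi_\mathfrak q'$ multiplies it by some $\lambda_c\in E_+^\mathbb Q$; thus $\Gamma(c;D,\mathfrak a_c)\equiv\lambda_c\,\Gamma(c;D',\mathfrak a_c')\bmod\mu_\infty$, with $\lambda_c$ regarded in $\overline{\mathbb Q_p}$ via the fixed embedding. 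Since $\lambda_c$ is a rational power of an element of $F$ and $\tau\in W_{F_\mathfrak p}$ fixes $F_\mathfrak p\supset\mathrm{id}(F)$ pointwise, one has $\Phi_\tau(\lambda_c)\equiv\lambda_c\bmod\mu_\infty$ (an $m$-th root of a $\tau$-fixed quantity is moved only by a root of unity, and $\Phi_\tau$ acts on $\overline{\mathbb Q_p}\subset B_{\mathrm{cris}}\overline{\mathbb Q_p}$ through $\tau$). Hence the left-hand side of the reduced congruence acquires the factor $\Phi_\tau(\lambda_c)\equiv\lambda_c$ and the right-hand side acquires the factor $\lambda_c$ as well, so the ratio of the two sides is unchanged mod $\mu_\infty$, and the congruence holds for one choice of data iff it holds for another. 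This proves the proposition.

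The one delicate point is the identification of the data-change factor $\lambda_c$ of $\Gamma(c;D,\mathfrak a_c)$ with an element of $E_+^\mathbb Q$ that is $\Phi_\tau$-fixed mod $\mu_\infty$; this combines Conjecture \ref{Yc}, the independence statement (\ref{indep1}) (ultimately \cite[Lemmas 1, 4, Corollary 1]{Ka4}), and the fact that $W_{F_\mathfrak p}$ acts trivially on $\mathrm{id}(F)$. A purely computational alternative --- propagating the $\pi_\mathfrak q$-dependence through the $W$- and $W_p$-terms of the three $\Gamma$-factors at levels $\mathfrak f$ and $\mathfrak{fp}$ simultaneously, via Proposition \ref{575859} --- is available but considerably heavier; the reduction through Proposition \ref{relofgc}-(iv) is what keeps the bookkeeping transparent.
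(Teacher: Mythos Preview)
Your argument is correct. For (ii) you and the paper agree verbatim: $\Gamma(c)\bmod\mu_\infty$ is data-independent by (\ref{indep2}).

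For (i) you take a slightly different route from the paper, though the ingredients are the same. The paper keeps Conjecture \ref{maincnj}-(i) in its original form and shows directly that the variations of the two data-sensitive pieces
\[
\frac{\tilde\tau(\exp(X([\mathfrak p]c;D,\mathfrak p\mathfrak a_c)))\,\pi_\mathfrak p^{\zeta(0,c)/h_F^+}}{\exp(X(c;D,\mathfrak a_c))}
\quad\text{and}\quad
\prod_{\tilde c\in\phi^{-1}([\mathfrak p]c)}\exp_p(X_p(\tilde c;D,\mathfrak a_c))
\]
coincide mod $\mu_\infty$: drop $\tilde\tau$ (the variation lies in $E_+^\mathbb Q$, which $\tilde\tau$ fixes), rewrite the first quotient via Proposition \ref{575859} as $\prod_{\tilde c}\exp(X(\tilde c;D,\mathfrak a_c))$, and apply (\ref{indep2}) at level $\mathfrak{fp}$. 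You instead first pass through Proposition \ref{relofgc}-(iv) (itself a consequence of Proposition \ref{575859}) to recast the conjecture as
\[
\Phi_\tau(\Gamma(c;D,\mathfrak a_c))\equiv\Gamma(c;D,\mathfrak a_c)\prod_{\tilde c\in\phi^{-1}([\mathfrak p]c)}\Gamma(\tilde c)\pmod{\mu_\infty},
\]
where the product is data-free by case (ii); then the data-change factor $\lambda_c\in E_+^\mathbb Q$ visibly cancels between the two sides. Both arguments hinge on the same three facts --- (\ref{indep1}), (\ref{indep2}), and $\Phi_\tau$-invariance of $E_+^\mathbb Q$ mod $\mu_\infty$ --- but your reduction makes the bookkeeping lighter and exposes the equivalent form of the conjecture, at the cost of invoking the higher-level Proposition \ref{relofgc}-(iv) rather than Proposition \ref{575859} directly.
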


\begin{proof}
For (ii), the assertion is trivial since $\Gamma(c) \bmod \mu_\infty$ does not depend on any choice.
For (i), let $\tilde\tau \in \mathrm{Aut}(\mathbb C/F)$ be any lift 
of $\tau \in W_{F_\mathfrak p} \subset \mathrm{Gal}(\overline{F_\mathfrak p}/F_\mathfrak p) \subset \mathrm{Gal}(\overline{\mathbb Q}/F)$.
We need to show that the variations of 
\begin{align*}
\frac{\tilde\tau\left(\exp(X([\mathfrak p]c;D,\mathfrak p \mathfrak a_c))\right)\pi_{\mathfrak p}^{\frac{\zeta(0,c)}{h_F^+}}}{\exp(X(c;D,\mathfrak a_c))} ,\ 
\prod_{\tilde c \in \phi^{-1}( [\mathfrak p]c)}\exp_p(X_p(\tilde c;D,\mathfrak a_c))
\end{align*}
coincide $\bmod \mu_\infty$ when we replace $D,\mathfrak a_c,\pi_\mathfrak q$.
By (\ref{indep1}), the variation of $\exp(X([\mathfrak p]c;D,\mathfrak p \mathfrak a_c))$ is a rational power of an element in $E_+$, 
which is fixed under $\tilde \tau$ (up to $\mu_\infty$).
Therefore, when we consider the variation, we may drop $\tilde \tau$ of the former one.
Then it becomes $\prod_{\tilde c \in \phi^{-1}([\mathfrak p]c)}\exp(X(\tilde c;D,\mathfrak a_c))$ by Proposition \ref{575859}.
Hence the assertion follows from (\ref{indep2}).
\end{proof}

\begin{rmk}
{\rm Conjecture \ref{maincnj}-(i)} and {\rm Conjecture \ref{maincnj}-(ii)} are consistent with each other in the following sense:
Let $c \in C_{\mathfrak{fp}}$ with $\mathfrak p \nmid \mathfrak f$, $\phi \colon C_{\mathfrak{fp}} \ra C_\mathfrak f$ the natural projection.
We assume that $\mathfrak p \mid \mathfrak a_c$.
Then {\rm Proposition \ref{relofgc}-(iv)} states that 
\begin{align*}
\prod_{\tilde c \in \phi^{-1}(c)} \Gamma(\tilde c) \exp_p(X_p(\tilde c;D,\mathfrak p^{-1}\mathfrak a_c)) \equiv 
\frac{\pi_\mathfrak p^{\frac{\zeta(0,[\mathfrak p]^{-1}c)}{h_F^+}}
\Gamma(c;D,\mathfrak a_c)}{\Gamma([\mathfrak p]^{-1}c;D,\mathfrak p^{-1}\mathfrak a_c)} \mod \mu_\infty.
\end{align*}
{\rm Conjecture \ref{maincnj}-(ii)} implies, under $\Phi_\tau$ $(\tau \in W_{F_\mathfrak p}$, $\deg_\mathfrak p \tau =1)$, 
the left-hand side changes to   
\begin{align} \label{lhs}
\prod_{\tilde c \in \phi^{-1}(c)} \Gamma(c_\tau \tilde c)
\exp_p(X_p(\tilde c;D,\mathfrak p^{-1}\mathfrak a_c)) \bmod \mu_\infty.
\end{align}
{\rm Conjecture \ref{maincnj}-(i)} implies that the right-hand side changes to   
\begin{align*}
\frac{\pi_{\mathfrak p}^{\frac{\zeta(0,c)}{h_F^+}}
\prod_{\tilde c \in \phi^{-1}(c)}
\exp_p(X_p(\tilde c;D,\mathfrak p^{-1}\mathfrak a_c))}{\prod_{\tilde c \in \phi^{-1}([\mathfrak p]c)}
\exp_p(X_p(\tilde c;D,\mathfrak a_c))}\frac{\Gamma([\mathfrak p]c;D,\mathfrak p \mathfrak a_c)}{\Gamma(c;D,\mathfrak a_c)} \bmod \mu_\infty,
\end{align*}
which is, by {\rm Proposition \ref{relofgc}-(iv)} again, equal to
\begin{align} \label{rhs}
\prod_{\tilde c \in \phi^{-1}(c)}\exp_p(X_p(\tilde c;D,\mathfrak p^{-1}\mathfrak a_c))
\prod_{\tilde c \in \phi^{-1}([\mathfrak p]c)} \Gamma(\tilde c).
\end{align}
Noting that 
$\{c_\tau \tilde c \in C_\mathfrak {fp} \mid \phi(\tilde c)=c\}=\{\tilde c \in C_\mathfrak {fp} \mid \phi(\tilde c)=[\mathfrak p] c\}$,
we obtain {\rm (\ref{lhs}) $=$ (\ref{rhs})}.
\end{rmk}

\subsection{Relation to the reciprocity law on Stark's units}

In this subsection, we prove that Conjecture \ref{maincnj} is a refinement of the reciprocity law (\ref{rlofSu}) on Stark's units.
Let $C_\mathfrak f,H_\mathfrak f$ be the narrow ray class group, the narrow ray class field, 
modulo $\mathfrak f$ over a totally real field $F$, as in previous subsections.
Let $H$ be an intermediate field of $H_\mathfrak f/F$, $\phi_H\colon C_\mathfrak f \ra \mathrm{Gal}(H/F)$ the Artin map.
We denote by $\infty_\mathrm{id}$ the real place corresponding to $\mathrm{id}\colon F \hookrightarrow \mathbb R$.

\begin{thm} \label{cns}
{\rm Conjectures \ref{Yc}, \ref{maincnj}} imply {\rm (\ref{rl})}.
\end{thm}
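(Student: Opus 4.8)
The plan is to deduce both assertions of (\ref{rl}) from Conjectures \ref{Yc} and \ref{maincnj}, writing $u(\rho):=\prod_{c\in\phi_H^{-1}(\rho)}\exp(X(c))/\exp_p(X_p(c))$ for $\rho\in\mathrm{Gal}(H/F)$ and $S:=\{\text{infinite places}\}\cup\{\text{primes dividing }\mathfrak f\}$. Since $\infty_\mathrm{id}\in S$ splits completely in $H/F$ and $|S|\geq 2$, we have $\zeta_S(0,\rho)=0$ for all $\rho$; recall $\mathfrak p=\mathfrak p_\mathrm{id}\mid\mathfrak f$ throughout. First, algebraicity: Conjecture \ref{Yc} gives $\exp(X(c;D,\mathfrak a_c))/\mathrm P(c)\in\oq^\times$ for each $c$, and taking the product over $c\in\phi_H^{-1}(\sigma)$ while repeating the bilinearity computation in the proof of Proposition \ref{prispr} --- using $\zeta_S(0,\sigma)=0$ and $\sum_{c\in\phi_H^{-1}(\sigma)}\zeta(0,cc')=\zeta_S(0,\phi_H(c')\sigma)=0$ --- shows $\prod_{c\in\phi_H^{-1}(\sigma)}\mathrm P(c)$ is trivial in $\mathbb C^\times/\oq^\times$, hence $\prod_{c\in\phi_H^{-1}(\sigma)}\exp(X(c;D,\mathfrak a_c))\in\oq^\times$. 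Since $\prod_{c\in\phi_H^{-1}(\sigma)}\exp_p(X_p(c;D,\mathfrak a_c))\equiv\epsilon(\sigma,D,\{\mathfrak a_c\})^{1/N}\bmod\mu_\infty$ with $\epsilon(\sigma,D,\{\mathfrak a_c\})\in E_+$, the quotient $u(\sigma)$ lies in $\oq^\times$; this is the first line of (\ref{rl}).

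Next, the reciprocity law for $\tau\in W_{F_\mathfrak p}$. Because $\infty_\mathrm{id}$ splits completely in $H/F$, Proposition \ref{prispr} gives $\prod_{c\in\phi_H^{-1}(\rho)}\Gamma(c)\equiv u(\rho)\bmod\mu_\infty$ in $(B_{\mathrm{cris}}\overline{\mathbb Q_p}-\{0\})^{\mathbb Q}/\mu_\infty$, where $u(\rho)$ is regarded in $\overline{\mathbb Q_p}$ via the fixed embedding $\oq\hookrightarrow\overline{\mathbb Q_p}$. Apply $\Phi_\tau$. It is multiplicative and restricts to $\tau$ on $\overline{\mathbb Q_p}$ --- writing $B_{\mathrm{cris}}\overline{\mathbb Q_p}\cong B_{\mathrm{cris}}\otimes_{\mathbb Q_p^{\mathrm{ur}}}\overline{\mathbb Q_p}$ and $\alpha=1\otimes\alpha$ for $\alpha\in\overline{\mathbb Q_p}$, one has $\Phi_\tau(\alpha)=1\otimes\tau(\alpha)$ --- so $\Phi_\tau\bigl(\prod_{c\in\phi_H^{-1}(\sigma)}\Gamma(c)\bigr)\equiv u(\sigma)^\tau$. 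On the other hand, by Conjecture \ref{maincnj}-(ii), $\Phi_\tau\bigl(\prod_{c\in\phi_H^{-1}(\sigma)}\Gamma(c)\bigr)\equiv\prod_{c\in\phi_H^{-1}(\sigma)}\Gamma(c_\tau c)=\prod_{c'\in\phi_H^{-1}(\phi_H(c_\tau)\sigma)}\Gamma(c')\equiv u(\phi_H(c_\tau)\sigma)$, and $\phi_H(c_\tau)=\tau|_H$. Hence $u(\sigma)^\tau\equiv u(\tau|_H\sigma)\bmod\mu_\infty$ for every $\tau\in W_{F_\mathfrak p}$, i.e.\ for every $\tau$ in the decomposition subgroup of $\mathfrak p$ in $\mathrm{Gal}(\oq/F)$.

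To pass to all $\tau\in\mathrm{Gal}(\oq/F)$: since $\mathrm{Gal}(H/F)$ is abelian, the set $T$ of $\tau$ with $u(\rho)^\tau\equiv u(\tau|_H\rho)\bmod\mu_\infty$ for all $\rho$ is a subgroup, and $T\supseteq\mathrm{Gal}(\oq/M)$ for a finite Galois $M/F$ containing $H$ and all $u(\rho)$. The previous step, applied with $p$ replaced by the rational prime below any other prime divisor of $\mathfrak f$ (legitimate since $u(\rho)$ is independent of that choice, by \cite[Corollary 1-(ii)]{Ka4}), shows $T$ contains the decomposition subgroup at every prime dividing $\mathfrak f$. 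For a prime $\mathfrak q\nmid\mathfrak f$ one argues instead from Conjecture \ref{maincnj}-(i) applied to $H$ with the two moduli $\mathfrak f$ and $\mathfrak{fq}$: forming $\prod_{c\in\phi_H^{-1}(\sigma)}$ of both sides and simplifying via Propositions \ref{575859}, \ref{relofgc} and \ref{prispr}, the period factors become trivial, the $\pi_\mathfrak q$-contribution is $\pi_\mathfrak q^{\zeta_S(0,\sigma)/h_F^+}=1$, and the Euler-factor identity $\zeta'_{S\cup\{\mathfrak q\}}(0,\rho)=\zeta'_S(0,\rho)-\zeta'_S(0,[\mathfrak q]^{-1}\rho)$, equivalently $u_{\mathfrak{fq}}(\rho)\equiv u_\mathfrak f(\rho)/u_\mathfrak f([\mathfrak q]^{-1}\rho)$ with $[\mathfrak q]=\mathrm{Frob}_\mathfrak q|_H$, lets the remaining auxiliary $\exp_p(X_p(\cdot))$- and $\epsilon(\cdot)^{1/N}$-terms be cancelled, leaving $u_\mathfrak f(\sigma)^\tau\equiv u_\mathfrak f(\tau|_H\sigma)\bmod\mu_\infty$ for $\tau\in W_{F_\mathfrak q}$. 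Thus $T$ contains the decomposition subgroup at every prime, so $T=\mathrm{Gal}(M/F)$ by Chebotarev, which is (\ref{rl}).

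The first two steps are essentially formal once the conjectures are granted; the genuine work --- and the main obstacle --- is the third: it requires invoking Conjectures \ref{Yc} and \ref{maincnj} for varying primes and embeddings, carefully bookkeeping the auxiliary $\exp_p(X_p(\cdot))$-factors, $\pi_\mathfrak q$-powers and $\epsilon(\cdot)^{1/N}$-normalizations occurring in Conjecture \ref{maincnj}-(i) (all meaningful only modulo $\mu_\infty$), and comparing the Stark units attached to $\mathfrak f$ and to $\mathfrak{fq}$ via Euler factors. Should one be content with (\ref{rl}) for $\tau$ ranging only over $W_{F_\mathfrak p}$, the argument stops after the second step.
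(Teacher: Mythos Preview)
Your proof is correct and follows essentially the same strategy as the paper: algebraicity from Conjecture~\ref{Yc} together with the period-cancellation of Proposition~\ref{prispr}; the reciprocity at each prime $\mathfrak p$ from Conjecture~\ref{maincnj}-(ii) (when $\mathfrak p\mid\mathfrak f$) or Conjecture~\ref{maincnj}-(i) (when $\mathfrak p\nmid\mathfrak f$); and then a density argument to pass to all $\tau\in\mathrm{Gal}(\oq/F)$. The paper simply asserts ``it suffices to show (\ref{rl}) for $\tau\in W_{F_\mathfrak p}$'' and then varies $\mathfrak p$, whereas you make the Chebotarev step and the subgroup $T$ explicit; conversely, the paper spells out the case $\mathfrak p\nmid\mathfrak f$ more concretely---it reduces directly to the identity $\prod_{\tilde c}\exp_p(X_p(\tilde c))\equiv\prod_{\tilde c}\exp_q(X_q(\tilde c))$ for an auxiliary $\mathfrak q\mid\mathfrak f$ via Proposition~\ref{575859} and \cite[Theorem~4]{Ka4}, which is exactly the content of your Euler-factor cancellation $u_{\mathfrak{fq}}(\rho)\equiv u_\mathfrak f(\rho)/u_\mathfrak f([\mathfrak q]^{-1}\rho)$ unwound.
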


\begin{proof}
Assume that $\infty_\mathrm{id}$ splits completely in $H/F$, i.e., $s_{\mathrm{id}} \in \ker \phi_H$.
Then we can write
\begin{align*}
\prod_{c \in \phi_H^{-1}(\sigma)}\exp(X(c))=\left(\prod_{c \in \phi_H^{-1}(\sigma)}\exp(X(c))\exp(X(s_{\mathrm{id}}c))\right)^{\frac{1}{2}}.
\end{align*}
By Proposition \ref{prpofp}-(ii), Conjecture \ref{Yc} implies the algebraicity of the right-hand side.
In the following of this proof, we vary a prime ideal $\mathfrak p$ of $F$.
Simultaneously, we vary a rational prime $p$ and an embedding $\mathrm{id} \colon F \hookrightarrow \mathbb C_p$ 
satisfying $\mathrm{id}$ induces $\mathfrak p$.
In particular we regard the Weil group $W_{F_\mathfrak p}$ as a subgroup of the decomposition group 
$D_\mathfrak p \subset \mathrm{Gal}(\oq/F)$ at $\mathfrak p$.
Then it suffices to show that (\ref{rl}) for $\tau \in W_{F_\mathfrak p}$ follows form Conjecture \ref{maincnj} for $\mathfrak p$.
First assume that $\mathfrak p \mid \mathfrak f$.
By Proposition \ref{prispr} we have
\begin{align*}
\prod_{c \in \phi_H^{-1}(\sigma)}\Gamma(c)\equiv \prod_{c \in \phi_H^{-1}(\sigma)}\frac{\exp(X(c))}{\exp_p(X_p(c))} \mod \mu_\infty.
\end{align*}
Then (\ref{rl}) follows from Conjecture \ref{maincnj}-(ii) since $\Phi_\tau$ is $\tau$-semilinear.
Next, assume that $\mathfrak p \nmid \mathfrak f$.
By Proposition \ref{prispr} again we have
\begin{align*}
\prod_{c \in \phi_H^{-1}(\sigma)}\Gamma(c;D,\mathfrak a_c)\equiv \prod_{c \in \phi_H^{-1}(\sigma)}\exp(X(c;D,\mathfrak a_c)) \mod \mu_\infty.
\end{align*}
Hence, when $\deg_\mathfrak p \tau=1$, Conjecture \ref{maincnj}-(i) implies that
\begin{align*}
&\tau(\prod_{c \in \phi_H^{-1}(\sigma)}\exp(X(c;D,\mathfrak a_c))) \\
&\equiv \frac{\pi_{\mathfrak p}^{\sum_{c \in \phi_H^{-1}(\sigma)}\frac{\zeta(0,c)}{h_F^+}}}
{\prod_{\tilde c \in \tilde\phi_H^{-1}((\frac{H/F}{\mathfrak p})\sigma)}\exp_p(X_p(\tilde c;D,\mathfrak a_c))}
\prod_{c \in \phi_H^{-1}(\sigma)}\exp(X([\mathfrak p]c;D,\mathfrak p\mathfrak a_c)) \mod \mu_\infty,
\end{align*}
where $\tilde \phi_H\colon C_{\mathfrak f\mathfrak p}\ra \mathrm{Gal}(H/F)$ denotes the Artin map.
Concerning $X_p(\tilde c;D,\mathfrak a_c)$, 
we take $\mathfrak a_c$ associated with the ideal class $c \in C_\mathfrak f$ satisfying that 
the image of $\tilde c \in C_\mathfrak{fp}$ under the natural projection is equal to $[\mathfrak p]c$.
It suffices to show that 
\begin{align*}
&\prod_{\tilde c \in \tilde\phi_H^{-1}((\frac{H/F}{\mathfrak p})\sigma)}\exp_p(X_p(\tilde c;D,\mathfrak a_c)) \\
&\equiv \pi_{\mathfrak p}^{\sum_{c \in \phi_H^{-1}(\sigma)}\frac{\zeta(0,c)}{h_F^+}} 
\frac{\prod_{c \in \phi_H^{-1}(\sigma)}\exp_q(X_q([\mathfrak p]c;D,\mathfrak p\mathfrak a_c))}
{\prod_{c \in \phi_H^{-1}(\sigma)}\exp_q(X_q(c;D,\mathfrak a_c))} \mod \mu_\infty
\end{align*}
for a prime ideal $\mathfrak q$ dividing $\mathfrak f$ lying above a rational prime $q$.
By Proposition \ref{575859}, the right-hand side is equal to
\begin{align*}
\prod_{\tilde c \in \tilde\phi_H^{-1}((\frac{H/F}{\mathfrak p})\sigma)}\exp_q(X_q(\tilde c;D,\mathfrak a_c)) \bmod \mu_\infty,
\end{align*}
which does not depend on the prime ideal $\mathfrak q$ dividing $\mathfrak{fp}$ by \cite[Theorem 4]{Ka4}.
Then the assertion is clear. 
\end{proof}

\subsection{Relation to refinements of Gross' conjecture in \cite{KY1}, \cite{KY2}}

Yoshida and the author formulated conjectures concerning $X_p(c)$ also in \cite{KY1}, \cite{KY2}.
In \cite{KY1}, we formulated a conjecture \cite[Conjecture A$'$]{KY1} equivalent to (\ref{rGc}), which is a refinement of Conjecture \ref{GSc} by Gross.
In \cite{KY2}, we rewrite and generalize this conjecture in terms of the symbol $Q^{(i)}$ defined by \cite[(4.2)]{KY2}:
roughly speaking, \cite{KY2} focuses on the absolute Frobenius action on $H_{\mathrm{cris}}(M(\chi))$,
whereas this paper focuses on the absolute Frobenius action on the $p$-adic period of $M(\chi)$.
(Moreover \cite[Conjecture Q]{KY2} is a statement $\bmod (K^\times)^\mathbb Q$, not $\bmod \mu_\infty$.)
These are consistent with each other under the following further refinement (\ref{assump}) of Conjecture \ref{Yc}.
In particular, as we shall see in Theorem \ref{cns2} below, Conjecture \ref{maincnj}-(i) and (\ref{assump}) imply (\ref{rGc}).

Let the notation be as in Conjecture \ref{Yc}.
We put 
\begin{align*}
\Xi:=\sum_{c' \in C_{\mathfrak f}} \tfrac{\zeta(0,c')}{[H_\mathfrak f:H_{\mathrm{CM}}]} \mathrm{Art}(c')^{-1} \in I_{H_{\mathrm{CM}},\mathbb Q}
\end{align*}
and take $\chi_{w\Xi}$ as in Proposition \ref{pphi}. By (\ref{pptoPP}) we can rewrite
\begin{align*}
\Gamma(c;D,\mathfrak a_c) 
\equiv \frac{\exp(X(c;D,\mathfrak a_c))}{\mathrm{Per}(\mathrm{Art}(c),\chi_{w\Xi})^{\frac{1}{2w}}}
\mathrm{Per}_p(\mathrm{Art}(c),\chi_{w\Xi})^{\frac{1}{2w}} \mod \mu_\infty \quad (\mathfrak p \nmid \mathfrak f).
\end{align*}
In this subsection, we assume that we can take $\chi_{w\Xi}$ satisfying 
\begin{align} \label{assump}
\exp(X(c;D,\mathfrak a_c))
\equiv 
\mathrm{Per}(\mathrm{Art}(c),\chi_{w\Xi})^{\frac{1}{2w}}
\mod (H_{\mathrm{CM}}^\times)^\mathbb Q.
\end{align}
When $F=\mathbb Q$, the assumption (\ref{assump}) also follows from Rohrlich's formula (\ref{rh}).

\begin{exm}
{\rm Example \ref{mawahc}} is a supporting evidence of not only {\rm Conjecture \ref{Yc}}, but also the assumption {\rm (\ref{assump})}:
Let $\chi$ be an algebraic Hecke character of $K=\mathbb Q(\sqrt{2\sqrt{5}-26})$ whose infinity type is $w(\mathrm{id}-\rho)$.
Let $F_0:=\mathbb Q(\sqrt{41})$, which is the maximal totally real subfield of the reflex field.
Then we see that 
\begin{align*}
M(\chi)\times_K KF_0 =(H^1(C\times_{F_0} KF_0) \otimes_K H^1(C'\times_{F_0} KF_0)\otimes_\mathbb Q \mathbb Q(1))^{\otimes w}
\end{align*}
as a motive defined over $KF_0$, with coefficients in $K$. 
We note that 
\begin{align*}
\omega_{\mathrm{id}} \otimes \omega_{\mathrm{id}}' \in H_{\mathrm{dR}}(M(\chi)\times_K KF_0)
\end{align*}
is fixed under the conjugation map of $F_0$,
that is, $\omega_{\mathrm{id}} \otimes \omega_{\mathrm{id}}' \in H_{\mathrm{dR}}(M(\chi))$, 
although each $\omega_{\mathrm{id}},\omega_{\mathrm{id}}'$ is defined only over $KF_0$.
Hence we have
\begin{align*}
\mathrm{Per}(\mathrm{id},\chi) \equiv \left((2\pi i)^{-1} \int \omega_{\mathrm{id}}\int \omega_{\mathrm{id}}'\right)^w \mod K^\times.
\end{align*}
Therefore the numerical example {\rm (\ref{num})} satisfies the assumption {\rm (\ref{assump})}.
\end{exm}

Under the assumption (\ref{assump}), 
the absolute Frobenius action on $H_{\mathrm{cris}}(M(\chi_{w\Xi}))$ coincides with that on the $p$-adic period of $M(\chi_{w\Xi})$:
let $\mathfrak P$ be the prime ideal of $H_{\mathrm{CM}}$ corresponding to the $p$-adic topology, 
$\pi_\mathfrak P$ a generator of $\mathfrak P^{h}$ with $h$ the class number of $H_{\mathrm{CM}}$.
Let $\tau \in W_{(H_{\mathrm{CM}})_\mathfrak P}$ with $\deg_\mathfrak P \tau=1$, $c \in C_\mathfrak f$ with $\mathfrak p\nmid \mathfrak f$.
Under the assumption (\ref{assump}), Proposition \ref{defP} implies  
\begin{align} 
&\Phi_\tau(\Gamma(c;D,\mathfrak a_c)) \notag \\
&\equiv \frac{\exp(X(c;D,\mathfrak a_c))}{\mathrm{Per}(\mathrm{Art}(c),\chi_{w\Xi})^{\frac{1}{2w}}}
\Phi_\tau\left(\mathrm{Per}_p(\mathrm{Art}(c),\chi_{w\Xi})^{\frac{1}{2w}}\right)
\equiv \alpha_c \Gamma(c;D,\mathfrak a_c), \label{assimply}
\end{align}
where we put
\begin{align*}
\alpha_c&:=\mathrm{Art}(c)((w\Xi-\rho\circ w\Xi)(\pi_\mathfrak P))^{\frac{1}{2wh}} 
\equiv \prod_{c' \in C_{\mathfrak f}} \mathrm{Art}(cc'^{-1})(\pi_\mathfrak P)^{\frac{\zeta(0,c')}{h[H_\mathfrak f:H_{\mathrm{CM}}]}} \mod \mu_\infty.
\end{align*}

\begin{thm} \label{cns2}
{\rm Conjecture \ref{maincnj}-(i)} and the assumption {\rm (\ref{assump})} imply {\rm (\ref{rGc})}.
\end{thm}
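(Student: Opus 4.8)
The plan is to exploit the two independent formulas for $\Phi_\tau(\Gamma(c;D,\mathfrak a_c))$ — the reciprocity law of Conjecture \ref{maincnj}-(i) and the explicit transformation law (\ref{assimply}) supplied by the assumption (\ref{assump}) through Proposition \ref{defP} — equate them, and then pass to a Galois fibre; this reduces (\ref{rGc}) to a Brumer--Stark--type identity among the $p$-adic periods. Fix a CM-field $H$ in which $\mathfrak p$ splits completely and an ideal $\mathfrak f$ with $H\subseteq H_\mathfrak f$, $\mathfrak p\mid\mathfrak f$, and let $\mathfrak f_0$ be its prime-to-$\mathfrak p$ part. Since $H$ is unramified at $\mathfrak p$ we have $H\subseteq H_{\mathfrak f_0}$ and $[\mathfrak p]\in\ker(\phi_{H,0}\colon C_{\mathfrak f_0}\to\mathrm{Gal}(H/F))$; by Proposition \ref{relofgc}-(ii) the product $\prod_{\tilde c\in\phi_H^{-1}(\sigma)}\Gamma(\tilde c)$ is unchanged when the $\mathfrak p$-part of $\mathfrak f$ is enlarged, while $\alpha^\sigma$ depends only on $\mathfrak f_0$, so I may assume $\mathfrak f=\mathfrak f_0\mathfrak p$.

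Conjecture \ref{maincnj}-(i) then applies at the prime-to-$\mathfrak p$ level $\mathfrak f_0$; combining it with Proposition \ref{relofgc}-(iv) (taken at $\mathfrak q=\mathfrak p$, $c'=[\mathfrak p]c$, $\mathfrak a_{c'}=\mathfrak p\mathfrak a_c$) rewrites it, for $\tau\in W_{F_\mathfrak p}$ with $\deg_\mathfrak p\tau=1$, in the compact form
\begin{align*}
\Phi_\tau\bigl(\Gamma(c;D,\mathfrak a_c)\bigr)\ \equiv\ \Gamma(c;D,\mathfrak a_c)\,\gamma([\mathfrak p]c)\pmod{\mu_\infty},\qquad
\gamma(c'):=\prod_{\tilde c\in\phi^{-1}(c')}\Gamma(\tilde c),
\end{align*}
where $\phi\colon C_{\mathfrak f_0\mathfrak p}\to C_{\mathfrak f_0}$ and the $\Gamma(\tilde c)$ are taken at level $\mathfrak f_0\mathfrak p=\mathfrak f$ (so $\gamma$ is well defined, $\mathfrak p$ dividing this modulus). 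By Conjecture \ref{maincnj}-(ii) applied at level $\mathfrak f$ one has $\Phi_\tau(\gamma(c'))\equiv\gamma([\mathfrak p]c')$, so, as $\Phi_\tau$ is multiplicative, iteration gives $\Phi_\tau^{f}(\Gamma(c;D,\mathfrak a_c))\equiv\Gamma(c;D,\mathfrak a_c)\prod_{j=1}^{f}\gamma([\mathfrak p]^{j}c)$, where $f$ is the residue degree of $\mathfrak p$ in the maximal CM-subfield $L$ of $H_{\mathfrak f_0}$ (note $H\subseteq L$). Meanwhile $\tau^{f}$ lies in the Weil group of $L$ at its prime $\mathfrak P_L$ above $\mathfrak p$ under $\mathrm{id}$ with local degree $1$, so (\ref{assimply}), applied at level $\mathfrak f_0$, gives $\Phi_\tau^{f}(\Gamma(c;D,\mathfrak a_c))\equiv\alpha_c\,\Gamma(c;D,\mathfrak a_c)$; cancelling the invertible factor $\Gamma(c;D,\mathfrak a_c)$ leaves $\alpha_c\equiv\prod_{j=1}^{f}\gamma([\mathfrak p]^{j}c)\pmod{\mu_\infty}$.

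Multiplying this over $c\in\phi_{H,0}^{-1}(\sigma)$ and using $[\mathfrak p]^{j}\in\ker\phi_{H,0}$ — so that each $[\mathfrak p]^{j}c$ runs again through $\phi_{H,0}^{-1}(\sigma)$ and the nested product reassembles into $\prod_{\tilde c\in\phi_H^{-1}(\sigma)}$ at level $\mathfrak f$ — gives $\prod_{c\in\phi_{H,0}^{-1}(\sigma)}\alpha_c\equiv\bigl(\prod_{\tilde c\in\phi_H^{-1}(\sigma)}\Gamma(\tilde c)\bigr)^{f}$. Here $\mathfrak p\mid\mathfrak f$ and $\mathfrak p$ splits completely in $H$, so $\zeta_S(0,\sigma)=0$; hence Proposition \ref{prispr} replaces the right-hand side by $\bigl(\prod_{\tilde c\in\phi_H^{-1}(\sigma)}\exp(X(\tilde c))/\exp_p(X_p(\tilde c))\bigr)^{f}$.

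It remains to match $\prod_{c\in\phi_{H,0}^{-1}(\sigma)}\alpha_c$ against $\alpha^{\sigma}$. Using the explicit formula for $\alpha_c$ in (\ref{assimply}) — a rational power of a Galois twist of $\pi_{\mathfrak P_L}$ — one reindexes the double product by $d=cc'^{-1}$, collapses the inner sums over the fibres of $\phi_{H,0}$ into partial zeta values $\zeta_{S_0}(0,\cdot)$ ($S_0$ the infinite places together with the primes dividing $\mathfrak f_0$), and pushes down along $N_{L/H}$, obtaining $\prod_{c\in\phi_{H,0}^{-1}(\sigma)}\alpha_c\equiv\prod_{\delta\in\mathrm{Gal}(H/F)}\bigl(N_{L/H}(\pi_{\mathfrak P_L})^{\delta}\bigr)^{\zeta_{S_0}(0,\delta^{-1}\sigma)/h_L}$, to be compared with $\alpha^{\sigma}=\prod_{\delta}(\alpha_H^{\,\delta})^{\zeta_{S_0}(0,\delta^{-1}\sigma)}$ (the definition of $\alpha$, regrouped according to $\phi_{H,0}(c)$). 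As $N_{L/H}(\pi_{\mathfrak P_L})^{h_H}$ and $\alpha_H^{\,f\,h_L}$ generate the same ideal of $H$, they differ by some $u\in\mathcal O_H^\times$, and $\prod_{\delta}(u^{\delta})^{\zeta_{S_0}(0,\delta^{-1}\sigma)}$ is a root of unity because $\sum_{\delta}\zeta_{S_0}(0,\delta^{-1})\,\delta\in\mathbb Q[\mathrm{Gal}(H/F)]$ is supported on the odd characters while $\mathcal O_H^\times\otimes\mathbb Q$ is supported on the even ones — equivalently, $\alpha$ in (\ref{rGc}) is well defined modulo $\mu_\infty$. Combining the displays yields $\bigl(\prod_{\tilde c}\exp(X(\tilde c))/\exp_p(X_p(\tilde c))\bigr)^{f\,h_L\,h_H}\equiv(\alpha^{\sigma})^{f\,h_L}$, and taking the $(f\,h_L)$-th root in the $\mathbb Q$-vector space $\overline{\mathbb Q_p}^{\times}/\mu_\infty$ gives exactly (\ref{rGc}). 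I expect this last paragraph to be the main obstacle — the bookkeeping of $h_L,h_H,f$ and the norm $N_{L/H}$, together with the odd/even character argument — along with the need to align the Frobenius local degrees of Conjecture \ref{maincnj}-(i) and (\ref{assimply}), which is what forces the passage to $\tau^{f}$; throughout one must also carry along the data-independence of Proposition \ref{cst} and of \cite{Ka4} so that every "$\pmod{\mu_\infty}$" step is legitimate.
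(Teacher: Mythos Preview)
Your overall architecture matches the paper's: iterate the Frobenius law from Conjecture~\ref{maincnj}-(i) $f$ times at level $\mathfrak f_0$, compare with the explicit formula (\ref{assimply}) coming from (\ref{assump}), take the product over a Galois fibre, and identify the result with $\alpha^\sigma$. There is, however, one genuine gap.

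\textbf{Unavailable hypothesis.} In the iteration step you write ``By Conjecture~\ref{maincnj}-(ii) applied at level $\mathfrak f$ one has $\Phi_\tau(\gamma(c'))\equiv\gamma([\mathfrak p]c')$.'' But Theorem~\ref{cns2} assumes only Conjecture~\ref{maincnj}-(i), not (ii); the Remark following Proposition~\ref{cst} shows (i) and (ii) are \emph{consistent}, not that (i) implies (ii). So as written your proof uses a hypothesis you do not have. The fix is to obtain the same iterated identity from (i) alone: apply Conjecture~\ref{maincnj}-(i) $f$ times directly, exactly as the paper does, to get
\[
\Phi_\tau^{f}\bigl(\Gamma(c;D,\mathfrak a_c)\bigr)\equiv
\frac{\pi_{\mathfrak p}^{\sum_{i=0}^{f-1}\zeta(0,[\mathfrak p]^i c)/h_F^+}\,
\Gamma([\mathfrak p]^f c;D,\mathfrak p^f \mathfrak a_c)}
{\prod_{i=1}^{f}\prod_{\tilde c\in\phi^{-1}([\mathfrak p]^ic)}\exp_p(X_p(\tilde c;D,\mathfrak p^{i-1}\mathfrak a_c))},
\]
and \emph{then} apply Proposition~\ref{relofgc}-(iv) at each level $i=1,\dots,f$ and telescope to recover your product form $\Gamma(c;D,\mathfrak a_c)\prod_{j=1}^f\gamma([\mathfrak p]^jc)$. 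This is legitimate because Proposition~\ref{relofgc}-(iv) is unconditional and $\gamma(\cdot)$ is data-independent. The point is that $\Phi_\tau$ acts nontrivially on the $\exp_p(X_p(\cdot))$ factors in your ``compact form,'' so you cannot simply push $\Phi_\tau$ through $\gamma(c')$ without either (ii) or the direct iteration above.

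Apart from this, your endgame (pushing $\prod_c\alpha_c$ down via $N_{L/H}$ and the odd/even character argument for the unit ambiguity) is a mild reorganization of the paper's computation, which instead reduces first to the case where $H$ is the decomposition field of $\mathfrak p$ inside $H_{\mathfrak f_0}$ and uses $N_{H_{\mathrm{CM}}/H}(\mathfrak P)=\mathfrak P_H^{[H_{\mathrm{CM}}:H]}$ directly; both routes are fine.
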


\begin{proof}
Without loss of generality we may assume that
\begin{itemize}
\item $\mathrm{ord}_\mathfrak p\mathfrak f=1$.
\item $H$ is the maximal subfield of $H_\mathfrak f$ where $\mathfrak p$ splits completely.
\end{itemize}
We may assume the former one since Proposition \ref{575859} implies that
\begin{align*}
\prod_{c \in \phi_H^{-1}(\sigma)}\frac{\exp(X(c))}{\exp_p(X_p(c))} \bmod \mu_\infty 
\end{align*}
does not change if we replace $\mathfrak f$ with $\mathfrak f\mathfrak p^{-1}$ whenever $\mathrm{ord}_\mathfrak p\mathfrak f \geq 2$.
The latter one is obvious from the form of (\ref{rGc}).
We use the following notation:
\begin{align*}
&\mathfrak f=\mathfrak f_0\mathfrak p \ (\mathfrak p \nmid \mathfrak f_0), \\
&\phi_H\colon C_\mathfrak f \ra \mathrm{Gal}(H/F), \\
&\phi_{H,0}\colon C_{\mathfrak f_0} \ra \mathrm{Gal}(H/F), \\
&\phi \colon C_\mathfrak f \ra C_{\mathfrak f_0}, \text{ which satisfy } \phi_{H,0}\circ \phi=\phi_H.
\end{align*}
We consider the maximal CM-subfield $H_{\mathrm{CM}}$ of the narrow ray class field $H_{\mathfrak f_0}$ modulo $\mathfrak f_0$.
Let $\tau \in W_{(H_{\mathrm{CM}})_\mathfrak P} $ satisfy $\deg_\mathfrak P \tau =1$.
Considering $W_{(H_{\mathrm{CM}})_\mathfrak P} \subset W_{F_\mathfrak p}$, we put $f:=\deg_\mathfrak p \tau$, 
which equals the order of $(\frac{H/F}{\mathfrak p}) \in \mathrm{Gal}(H/F)$.
By using Conjecture \ref{maincnj}-(i) $f$-times, we have
\begin{align*}
\Phi_\tau(\Gamma(c;D,\mathfrak a_c))
\equiv \frac{\pi_{\mathfrak p}^{\sum_{i=0}^{f-1}\frac{\zeta(0,[\mathfrak p]^i c)}{h_F^+}}\Gamma([\mathfrak p]^f c;D,\mathfrak p^f \mathfrak a_c)}
{\prod_{i=1}^{f}\prod_{\tilde c \in \phi^{-1}( [\mathfrak p]^ic)}
\exp_p(X_p(\tilde c;D,\mathfrak p^{i-1}\mathfrak a_c))} \mod \mu_\infty \quad (c\in C_{\mathfrak f_0}).
\end{align*}
By taking $\prod_{c \in \phi_{H,0}^{-1}(\sigma)}$, 
we obtain 
\begin{align*}
\Phi_\tau(\prod_{c \in \phi_{H,0}^{-1}(\sigma)} \Gamma(c;D,\mathfrak a_c)) 
\equiv \frac{\pi_{\mathfrak p}^{\sum_{c \in \phi_{H,0}^{-1}(\sigma)}\sum_{i=0}^{f-1}\frac{\zeta(0,[\mathfrak p]^i c)}{h_F^+}}
\prod_{c \in \phi_{H,0}^{-1}(\sigma)}\Gamma([\mathfrak p]^f c;D,\mathfrak p^f \mathfrak a_c)}
{\prod_{c \in \phi_{H,0}^{-1}(\sigma)}\prod_{i=1}^{f}
\prod_{\tilde c \in \phi^{-1}( [\mathfrak p]^ic)}\exp_p(X_p(\tilde c;D,\mathfrak p^{i-1}\mathfrak a_c))} 
\mod \mu_\infty.
\end{align*}
By (\ref{assimply}), under the assumption {\rm (\ref{assump})}, the left-hand side equals 
\begin{align*}
(\prod_{c \in \phi_{H,0}^{-1}(\sigma)} \prod_{c' \in C_{{\mathfrak f}_0}} 
\mathrm{Art}(cc'^{-1})(\pi_\mathfrak P)^{\frac{\zeta(0,c')}{h[H_{\mathfrak f_0}:H_{\mathrm{CM}}]}} )
\prod_{c \in \phi_{H,0}^{-1}(\sigma)}\Gamma(c;D,\mathfrak a_c) \mod \mu_\infty,
\end{align*}
where $\mathfrak f$ in (\ref{assimply}) is $\mathfrak f_0$ here.
By noting that $\mathfrak p$ is unramified in $H_{\mathrm{CM}}/F$ since $\mathfrak p \nmid \mathfrak f_0$
and that $H$ is the decomposition field of $\mathfrak p$, we have
\begin{align*}
N_{H_{\mathrm{CM}}/H}(\mathfrak P)=\mathfrak P_H^{[H_{\mathrm{CM}}:H]}.
\end{align*}
It follows that
\begin{align*}
\prod_{c \in \phi_{H,0}^{-1}(\sigma)} \prod_{c' \in C_{{\mathfrak f_0}}} 
\mathrm{Art}(cc'^{-1})(\pi_\mathfrak P)^{\frac{\zeta(0,c')}{h[H_{\mathfrak f_0}:H_{\mathrm{CM}}]}} 
=\sigma\left(\prod_{c' \in C_{\mathfrak f_0}} 
\mathrm{Art}(c'^{-1})(\pi_\mathfrak P)^{\frac{\zeta(0,c')|\ker \phi_{H,0}|}{h[H_{\mathfrak f_0}:H_{\mathrm{CM}}]}}\right)
\end{align*}
is equal to $(\alpha^\sigma)^{\frac{1}{h_H}}$ of (\ref{rGc}) since $|\ker \phi_{H,0}|=[H_{\mathfrak f_0}:H]$.
Hence it suffices to show that 
\begin{align*}
&\prod_{c \in \phi_{H,0}^{-1}(\sigma)}\prod_{i=1}^{f}
\prod_{\tilde c \in \phi^{-1}( [\mathfrak p]^ic)}\exp(X(\tilde c;D,\mathfrak p^{i-1}\mathfrak a_c)) \\
&\equiv 
\frac{\pi_{\mathfrak p}^{\sum_{c \in \phi_{H,0}^{-1}(\sigma)}\sum_{i=0}^{f-1}\frac{\zeta(0,[\mathfrak p]^i c)}{h_F^+}}
\prod_{c \in \phi_{H,0}^{-1}(\sigma)}\Gamma([\mathfrak p]^f c;D,\mathfrak p^f \mathfrak a_c)}
{\prod_{c \in \phi_{H,0}^{-1}(\sigma)} \Gamma(c;D,\mathfrak a_c)}
\mod \mu_\infty.
\end{align*}
On the right-hand side, the ``period part'' of $\prod_{c \in \phi_{H,0}^{-1}(\sigma)} \Gamma(c;D,\mathfrak a_c)$ is equal to that of 
$\prod_{c \in \phi_{H,0}^{-1}(\sigma)}\Gamma([\mathfrak p]^f c;D,\mathfrak p^f \mathfrak a_c)$ since $\phi_{H,0}([\mathfrak p]^f)=\mathrm{id}$.
Therefore the problem is reduced to showing that 
\begin{align*}
\prod_{i=1}^{f}
\prod_{\tilde c \in \phi^{-1}( [\mathfrak p]^ic)}\exp(X(\tilde c;D,\mathfrak p^{i-1}\mathfrak a_c)) 
\equiv 
\frac{\pi_{\mathfrak p}^{\sum_{i=0}^{f-1}\frac{\zeta(0,[\mathfrak p]^i c)}{h_F^+}}\exp(X([\mathfrak p]^f c;D,\mathfrak p^f \mathfrak a_c))}{\exp(X(c;D,\mathfrak a_c))}
\mod \mu_\infty
\end{align*}
for each $c \in \phi_{H,0}^{-1}(\sigma)$.
Since the right-hand side is equal to
\begin{align*}
\prod_{i=1}^{f}
\frac{\pi_{\mathfrak p}^{\frac{\zeta(0,[\mathfrak p]^{i-1} c)}{h_F^+}}\exp(X([\mathfrak p]^{i} c;D,\mathfrak p^{i} \mathfrak a_c))}
{\exp(X([\mathfrak p]^{i-1} c;D,\mathfrak p^{i-1} \mathfrak a_c))}
\mod \mu_\infty,
\end{align*}
the assertion follows from Proposition \ref{575859}.
\end{proof}

\begin{rmk}
{\rm Conjecture \ref{maincnj}-(ii)} is also consistent with {\rm (\ref{rGc})} in the following sense:
assume that $\mathfrak p \mid \mathfrak f$ and that $\mathfrak p$ splits completely in $H/F$ as in {\rm (\ref{rGc})}.
Then by {\rm Proposition \ref{prispr}} we have   
\begin{align*}
\prod_{c \in \phi_H^{-1}(\sigma)}\Gamma(c) 
\equiv \prod_{c \in \phi_H^{-1}(\sigma)}\frac{\exp(X(c))}{\exp_p(X_p(c))}  \mod \mu_\infty \quad (\sigma \in \mathrm{Gal}(H/F)).
\end{align*}
Hence {\rm Conjecture \ref{maincnj}-(ii)} implies 
\begin{align*}
\tau\left(\prod_{c \in \phi_H^{-1}(\sigma)}\frac{\exp(X(c))}{\exp_p(X_p(c))}\right)
\equiv \prod_{c \in \phi_H^{-1}(\tau|_H \sigma)}\frac{\exp(X(c))}{\exp_p(X_p(c))}  \mod \mu_\infty \quad (\tau \in W_{F_\mathfrak p}).
\end{align*}
Note that {\rm (\ref{rGc})} implies that it holds true for any $\tau \in \mathrm{Gal}(\oq/F)$,
not only for $\tau \in W_{F_\mathfrak p} \subset  \mathrm{Gal}(\oq/F)$.
\end{rmk}

\section{The case when $F=\mathbb Q$}

In this section, we prove the following theorem.
We note that  the assumption (\ref{assump}), which is a strong version of Conjecture \ref{Yc}, follows from Rohrlich's formula (\ref{rh}) in the case $F=\mathbb Q$.

\begin{thm} \label{cns3}
When $F=\mathbb Q$, $p\neq 2$, {\rm Conjecture \ref{maincnj}} holds true.
\end{thm}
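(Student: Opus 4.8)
The plan is to reduce Conjecture \ref{maincnj} for $F=\mathbb{Q}$ to Coleman's computation of the (crystalline/de Rham) Frobenius on the cohomology of Fermat curves in \cite{Co}, which is essentially the content of \cite{Ka2}. First I would unwind the definitions in this case. Writing $\mathfrak f=(f)$ we have $C_\mathfrak f=(\mathbb{Z}/f)^\times$ and $H_\mathfrak f=\mathbb{Q}(\zeta_f)$, which is a CM-field once $f\ge 3$; if $f\le 2$ we are in the case $\langle s_\iota\rangle=\langle s_\iota s_{\iota'}\rangle$, where $\zeta(0,c)=0$ for all $c$ by Proposition \ref{cm}-(ii), so $\mathrm{P}_*(c)=1$, $X(c),X_p(c)\in\oq^\times\bmod\mu_\infty$ are Galois-equivariant in the required way, and both assertions of Conjecture \ref{maincnj} are immediate. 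So assume $f\ge 3$, whence $H_{\mathrm{CM}}=H_\mathfrak f$ and $[H_\mathfrak f:H_{\mathrm{CM}}]=1$. Since the cone dimension $r(j)$ is $1$ here, Barnes' multiple gamma degenerates to the ordinary $\Gamma$ (resp. to Morita's $p$-adic gamma), so by the formulas of \cite{Ka3}, \cite{Ka4} recalled in \S 2, $\exp(X(c))$ and $\exp_p(X_p(c))$ are, up to explicit $H_\mathfrak f^\times$- and $\mu_\infty$-factors, products of rational powers of $\Gamma(\tfrac{r}{f})\Gamma(\tfrac{s}{f})/\Gamma(\tfrac{r+s}{f})$ and of their $p$-adic analogues.

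Second I would invoke Rohrlich's formula (\ref{rh}) to identify these ratios with periods of the Fermat curve $F_f$: this is exactly the assertion that the strong form (\ref{assump}) of Conjecture \ref{Yc} holds for $F=\mathbb{Q}$, and, combined with its $p$-adic counterpart (Coleman's evaluation of the crystalline periods of $F_f$ by $p$-adic gamma values), it exhibits $\Gamma(c;D,\mathfrak a_c)$ when $\mathfrak p\nmid\mathfrak f$, resp. $\Gamma(c)$ when $\mathfrak p\mid\mathfrak f$, as a rational power (mod $\mu_\infty$) of the $p$-adic period $\mathrm{Per}_p(\mathrm{Art}(c),\chi_{w\Xi})$ of the rank-one motive $M(\chi_{w\Xi})$ cut out from $F_f$, times a factor in $H_\mathfrak f^\times$. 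This is precisely the computation underlying (\ref{assimply}).

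Third, with $\Gamma(c;D,\mathfrak a_c)$ (resp. $\Gamma(c)$) so expressed, I would apply Proposition \ref{defP}: for $\tau\in W_{F_\mathfrak p}$ with $\deg_\mathfrak p\tau=1$ the action $\Phi_\tau$ multiplies the $p$-adic period by $\mathrm{Art}(c)(\chi_{w\Xi}(\mathfrak P))^{1/2w}$, so by (\ref{assimply}) one gets $\Phi_\tau(\Gamma(c;D,\mathfrak a_c))\equiv\alpha_c\,\Gamma(c;D,\mathfrak a_c)\bmod\mu_\infty$. For the Fermat motive this Hecke eigenvalue is a Jacobi sum, and the Gross--Koblitz formula (the arithmetic shadow of Coleman's cohomological computation) rewrites it as a product of $p$-adic gamma values. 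The remaining task is to check that this product of $p$-adic gamma values reassembles, mod $\mu_\infty$, into the right-hand side of Conjecture \ref{maincnj}-(i), i.e.\ into $\pi_\mathfrak p^{\zeta(0,c)/h_F^+}\bigl(\prod_{\tilde c\in\phi^{-1}([\mathfrak p]c)}\exp_p(X_p(\tilde c;D,\mathfrak a_c))\bigr)^{-1}\Gamma([\mathfrak p]c;D,\mathfrak p\mathfrak a_c)$; this identification of the two combinatorial expressions is what \cite{Ka2} supplies. For part (ii) ($\mathfrak p\mid\mathfrak f$) I would first reduce to $\mathrm{ord}_\mathfrak p\mathfrak f=1$ using Proposition \ref{575859} together with the independence statements (\ref{indep1})--(\ref{indep2}), write $\mathfrak f=\mathfrak f_0\mathfrak p$ with $\mathfrak p\nmid\mathfrak f_0$, and then either run the same argument with Coleman's semistable-reduction version of the Frobenius formula at the ramified prime, or deduce it from part (i) at level $\mathfrak f_0$ via Proposition \ref{relofgc}-(iv) and the compatibility spelled out in the Remark following Proposition \ref{cst}.

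The main obstacle is this last matching step: keeping precise track of the $\mu_\infty$-ambiguity, of the normalizations $2\pi i$ versus $(2\pi i)_p$ and the exponent $\tfrac{1}{2w}$, and of the exact root of unity occurring in the Gross--Koblitz formula, so that the product of $p$-adic gamma values produced by $\Phi_\tau$ agrees on the nose (mod $\mu_\infty$) with the combinatorial right-hand side of Conjecture \ref{maincnj}. The hypothesis $p\neq 2$ enters here — and, in the ramified route for part (ii), in Coleman's formula as well — through the usual $2$-adic pathologies of the $p$-adic gamma function and of $\log_p,\exp_p$, and through the $\pm$-ambiguities in the root-of-unity bookkeeping.
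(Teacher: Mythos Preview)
Your outline identifies the right ingredients (Rohrlich's formula, Coleman's computation, \cite{Ka2}), but two of its steps do not work as written.

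First, your invocation of (\ref{assimply}) is for the wrong Frobenius. Proposition \ref{defP} and (\ref{assimply}) describe $\Phi_\tau$ for $\tau\in W_{(H_{\mathrm{CM}})_\mathfrak P}$ with $\deg_\mathfrak P\tau=1$, because $\chi_{w\Xi}$ is a Hecke character of $H_{\mathrm{CM}}=\mathbb Q(\zeta_f)$ and its motive lives over that field. Conjecture \ref{maincnj}-(i), however, concerns $\tau\in W_{\mathbb Q_p}$ with $\deg_{(p)}\tau=1$; such a $\tau$ need not fix $\mathbb Q(\zeta_f)$, and under $\Phi_\tau$ the class $c$ is carried to $[\mathfrak p]c$, not to itself. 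This is not a scalar action on a single $\mathrm{Per}_p$, so Gross--Koblitz alone cannot close the gap. The paper instead quotes \cite[Theorem 7.2]{Ka2}, which computes the base ($\mathbb Q_p$-)Frobenius on Fermat-curve cohomology directly and yields a ``beta'' identity of the shape
\[
\Phi_\tau\!\left(\frac{\Gamma(c_{r/m})\Gamma(c_{s/m})}{\Gamma(c_{(r+s)/m})}\right)\equiv\frac{\Gamma(c_{pr/m})\Gamma(c_{ps/m})}{\Gamma(c_{p(r+s)/m})}\cdot(\text{explicit factor})\ \bmod\ \mu_\infty
\]
in both cases $p\mid m$ and $p\nmid m$.

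Second, even with such a beta identity in hand, one must still extract the statement for a single $\Gamma(c_{r/m})$; you defer this to ``\cite{Ka2} supplies'', but \cite{Ka2} supplies only the beta identity. The paper bridges the gap by an induction on $t=\mathrm{ord}_2 m$: for $t=0$ one telescopes
\[
\prod_{k=0}^{f-1}\left(\frac{\Gamma(c_{2^k r/m})^2}{\Gamma(c_{2^{k+1}r/m})}\right)^{2^{f-1-k}}=\Gamma(c_{r/m})^{2^f-1}
\]
with $f$ the order of $2$ in $(\mathbb Z/m)^\times$, and for $t>0$ one writes $\Gamma(c_{r/m})^2=\Gamma(c_{r/(m/2)})\cdot\bigl(\Gamma(c_{r/m})^2/\Gamma(c_{r/(m/2)})\bigr)$. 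This is where $p\neq 2$ actually enters: the induction repeatedly uses the beta identity with $r=s$, and the Fermat-curve hypothesis $p\nmid rs(r+s)$ then forces $p\nmid 2r$. Your attribution of $p\neq 2$ to ``$2$-adic pathologies of $\Gamma_p,\log_p,\exp_p$'' misses the operative mechanism.

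Finally, your proposed reduction of part (ii) to part (i) via Proposition \ref{relofgc}-(iv) and the Remark after Proposition \ref{cst} does not work: that Remark shows only that (i) and (ii) are \emph{consistent} on the product $\prod_{\tilde c\in\phi^{-1}(c)}\Gamma(\tilde c)$, not that (i) determines $\Phi_\tau(\Gamma(\tilde c))$ for each individual $\tilde c$. In the paper the cases $p\mid m$ and $p\nmid m$ are handled in parallel through the same beta identity from \cite[Theorem 7.2]{Ka2}, followed by the induction on $\mathrm{ord}_2 m$ just described.
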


This theorem was essentially proved in \cite{Ka2} by using Coleman's results \cite{Co} on the absolute Frobenius action on Fermat curves.
Let $F=\mathbb Q$, $\mathfrak f=(m)$ with $m \in \mathbb N$.
Then we see that 
\begin{align*}	
&C_{(m)}=\{[(r)] \mid 1\leq r \leq m,\ (r,m)=1\} \cong (\mathbb Z/m\mathbb Z)^\times, \\
&H_{(m)}=H_{\mathrm{CM}}=\mathbb Q(\zeta_m) \quad (\zeta_m:=e^{\frac{2\pi i}{m}}).
\end{align*}
We put for $r \in \mathbb N$ with $(r,m)=1$
\begin{align*}
c_{\frac{r}{m}}&:=[(r)] \in C_{(m)}, \\
\sigma_{\frac{r}{m}}&:=\mathrm{Art}(c_r)\colon \zeta_m \mt \zeta_m^r \in \mathrm{Gal}(\mathbb Q(\zeta_m)/\mathbb Q).
\end{align*}
When $(r,m)=d$, we regard
\begin{align*}
c_{\frac{r}{m}}:=c_{\frac{r/d}{m/d}}=[(r/d)] \in C_{(m/d)},\ \text{etc.}
\end{align*}
Then we have for $1\leq r \leq m$ with $(r,m)=d_r$,
\begin{align}
\zeta(0,c_{\frac{r}{m}})&=\tfrac{1}{2}-\tfrac{r}{m}, \notag \\
\exp(X(c_{\frac{r}{m}}))&=\exp(\zeta'(0,c_{\frac{r}{m}}))=\Gamma(\tfrac{r}{m})(m/d_r)^{\frac{r}{m}-\frac{1}{2}}(2\pi)^{-\frac{1}{2}}, \label{ex} \\
\exp_p(X_p(c_{\frac{r}{m}})) &\equiv \Gamma_p(\tfrac{r}{m})(m/d_r)_0^{\frac{r}{m}-\frac{1}{2}} \mod \mu_\infty \quad (p\mid \frac{m}{d_r}). \label{exp}
\end{align}
Here $m_0$ denotes the prime-to-$p$ part of an integer $m$, 
$\Gamma_p$ on $\mathbb Q_p-\mathbb Z_p$ is defined in \cite[Lemma 4.2]{Ka2}.
The first two lines follow from well-known formulas for the Hurwitz zeta function, noting that $\zeta(s,c_{\frac{r}{m}})=\sum_{k =0}^\infty (r/d_r+km/d_r)^{-s}$.
The $p$-adic counterpart follows from the definition of $X_p$, 
noting that we can write $\Gamma_p(z)=\exp_p(L\Gamma_{p,1}(z,(1)))$ with $L\Gamma_{p,1}$ in \cite[(5.10)]{Ka1}.
We note that $\exp(X(c_{\frac{r}{m}})),\exp_p(X_p(c_{\frac{r}{m}}))$ do not depend on the choices of $D,\mathfrak a_c$.

We consider differential forms $\eta_{\frac{r}{m},\frac{s}{m}}:=x^{r-1}y^{s-m} dx$ ($0< r,s < m$, $r+s\neq m$) of the $m$th Fermat curve $F_m\colon x^m+y^m=1$.
When $r+s<m$, $(r,s,r+s,m)=1$, we define
\begin{align*}
\Xi_{\frac{r}{m},\frac{s}{m}}
&:=\sum_{1\leq b \leq m,\ (b,m)=1,\ \langle \frac{br}{m}\rangle +\langle \frac{bs}{m}\rangle +\langle \frac{b(m-r-s)}{m}\rangle =1}\sigma_\frac{b}{m} \\
&=\sum_{1\leq b \leq m,\ (b,m)=1}(1-\varepsilon(\tfrac{br}{m},\tfrac{bs}{m}))\sigma_\frac{b}{m}.
\end{align*}
Here $\langle \alpha \rangle \in (0,1)$ denotes the fraction part of $\alpha \in \mathbb Q-\mathbb Z$
and we put for $\alpha,\beta \in \mathbb Q$ with $\alpha,\beta,\alpha+\beta \notin \mathbb Z$
\begin{align*}
\varepsilon(\alpha,\beta):=\langle \alpha \rangle + \langle \beta \rangle - \langle \alpha+\beta \rangle 
=
\begin{cases}
0 & (\langle \alpha \rangle+\langle \beta \rangle <1) \\
1 & (\langle \alpha \rangle+\langle \beta \rangle >1)
\end{cases}.
\end{align*}
Then $\Xi_{\frac{r}{m},\frac{s}{m}}$ is a CM-type of $\mathbb Q(\zeta_m)$.
Moreover, by \cite[Chap.\ III, \S 2.1]{Yo}, we have
\begin{align*}
[2\pi i: (2\pi i)_p][p_{\mathbb Q(\zeta_m)}:p_{\mathbb Q(\zeta_m),p}](\mathrm{id},\Xi_{\frac{r}{m},\frac{s}{m}})
=[\int_\gamma \eta_{\frac{r}{m} ,\frac{s}{m}}:\int_{\gamma,p} \eta_{\frac{r}{m} ,\frac{s}{m} }].
\end{align*}

\begin{prp} \label{PP}
Assume that $0< r,s < m$, $r+s\neq m$, $(r,s,r+s,m)=1$.
Then we have 
\begin{align*}
&[\mathrm{P}(c_{\frac{r}{m}})\mathrm{P}(c_{\frac{s}{m}})\mathrm{P}(c_{\frac{r+s}{m}})^{-1}:
\mathrm{P}_p(c_{\frac{r}{m}})\mathrm{P}_p(c_{\frac{s}{m}})\mathrm{P}_p(c_{\frac{r+s}{m}})^{-1}] \\
&\equiv 
[2\pi i: (2\pi i)_p]^{-\frac{1}{2}}[\int_\gamma \eta_{\frac{r}{m},\frac{s}{m}}:\int_{\gamma,p} \eta_{\frac{r}{m},\frac{s}{m}}] \mod \mu_\infty.
\end{align*}
\end{prp}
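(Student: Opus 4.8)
The plan is to unwind the definition of $\mathrm{P}_*$, transport all three factors to the common field $\mathbb{Q}(\zeta_m)$ by the functoriality of Shimura's period symbol and its $p$-adic analogue (Proposition \ref{prpofp}), and then invoke the cited computation of Yoshida relating $p_{\mathbb{Q}(\zeta_m),*}$ to Fermat-curve periods. Concretely: for $1\le a\le m$ with $(a,m)=d_a$, Definition \ref{maindfn} (together with $[H_\mathfrak f:H_{\mathrm{CM}}]=1$, since each $\mathbb{Q}(\zeta_{m'})$ is its own maximal CM subfield) gives
\[
[\mathrm{P}(c_{\frac am}):\mathrm{P}_p(c_{\frac am})]=[2\pi i:(2\pi i)_p]^{\zeta(0,c_{\frac am})}\,[p_{\mathbb{Q}(\zeta_{m/d_a})}:p_{\mathbb{Q}(\zeta_{m/d_a}),p}]\bigl(\mathrm{Art}(c_{\tfrac am}),\,\textstyle\sum_{c'\in C_{(m/d_a)}}\zeta(0,c')\mathrm{Art}(c')\bigr).
\]
I would then use Proposition \ref{prpofp}-(iv) for the extension $\mathbb{Q}(\zeta_{m/d_a})\subset\mathbb{Q}(\zeta_m)$ to inflate both arguments, and Proposition \ref{prpofp}-(iii) with the automorphism $\mathrm{Art}(c_{\frac am})^{-1}$ of $\mathbb{Q}(\zeta_m)$ to re-center the left argument at $\mathrm{id}$; an elementary bookkeeping of fractional parts (using $\langle b/(m/d_a)\rangle=\langle bd_a/m\rangle$ and $jd_a\equiv a\pmod m$ for a lift $j$ of $a/d_a$ in $(\mathbb{Z}/m\mathbb{Z})^\times$) collapses the dependence on $d_a$, leaving
\[
[2\pi i:(2\pi i)_p]^{\zeta(0,c_{\frac am})}\,[p_{\mathbb{Q}(\zeta_m)}:p_{\mathbb{Q}(\zeta_m),p}]\bigl(\mathrm{id},\,\textstyle\sum_{(b,m)=1}(\tfrac12-\langle\tfrac{ab}{m}\rangle)\sigma_{\frac bm}\bigr).
\]

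Taking the product over $a=r,s$ and dividing by the factor for $a=r+s$, and using bilinearity of $p_{\mathbb{Q}(\zeta_m),*}$ in the right slot, the left-hand side of the proposition becomes $[2\pi i:(2\pi i)_p]^{E}\,[p_{\mathbb{Q}(\zeta_m)}:p_{\mathbb{Q}(\zeta_m),p}](\mathrm{id},\Theta)$ with $E=\zeta(0,c_{\frac rm})+\zeta(0,c_{\frac sm})-\zeta(0,c_{\frac{r+s}m})$ and $\Theta=\sum_{(b,m)=1}(\tfrac12-\varepsilon(\tfrac{br}{m},\tfrac{bs}{m}))\sigma_{\frac bm}$. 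A direct computation with the function $\varepsilon$ shows $\Theta=\Xi_{\frac rm,\frac sm}-\tfrac12\sum_{(b,m)=1}\sigma_{\frac bm}$ and that $\sum_{(b,m)=1}\sigma_{\frac bm}=\Xi_{\frac rm,\frac sm}+\rho\circ\Xi_{\frac rm,\frac sm}$ is $\rho$-symmetric; hence, by Proposition \ref{prpofp}-(ii) (and the normalization (\ref{nlz}), (\ref{mr})), the right argument may be replaced by the CM-type $\Xi_{\frac rm,\frac sm}$, up to a power of $[2\pi i:(2\pi i)_p]$ and a root of unity. Combining with the factor $[2\pi i:(2\pi i)_p]^{E}$ and the cited relation $[2\pi i:(2\pi i)_p]\,[p_{\mathbb{Q}(\zeta_m)}:p_{\mathbb{Q}(\zeta_m),p}](\mathrm{id},\Xi_{\frac rm,\frac sm})=[\int_\gamma\eta_{\frac rm,\frac sm}:\int_{\gamma,p}\eta_{\frac rm,\frac sm}]$ of \cite[Chap.\ III, \S 2.1]{Yo} then yields the asserted identity.

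The manipulations of $p_{\mathbb{Q}(\zeta_m),*}$ via Proposition \ref{prpofp} are purely formal, and the archimedean component of the identity is essentially Rohrlich's formula (\ref{rh}) combined with the explicit value (\ref{ex}) of $\exp(X(c_{\frac am}))$. The one point that requires genuine care is the accounting of the powers of $[2\pi i:(2\pi i)_p]$: the symbol $p_K$ kills a $\rho$-symmetric right argument only up to $\mu_\infty$-and-$(2\pi i)$-factors, and the contribution depends on whether $\mathrm{id}$ lies in $\Xi_{\frac rm,\frac sm}$ or in $\rho\circ\Xi_{\frac rm,\frac sm}$ — equivalently on the value $\varepsilon(\tfrac rm,\tfrac sm)\in\{0,1\}$ — which is exactly the case distinction present in $E$. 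One must verify that all these powers of $[2\pi i:(2\pi i)_p]$ combine to $[2\pi i:(2\pi i)_p]^{-1/2}$ as in the statement, uniformly for all $0<r,s<m$ with $r+s\neq m$ (including the case $r+s>m$), and in the degenerate cases where $m/d_a\le 2$ so that the factor $\mathrm{P}(c_{\frac am})$ is $1$. This bookkeeping is the main obstacle; everything else is routine.
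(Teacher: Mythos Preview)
Your outline matches the paper's proof for the case $r+s<m$: reduce each $\mathrm{P}_*(c_{\frac am})$ to the common field $\mathbb Q(\zeta_m)$ via Proposition~\ref{prpofp}-(iii),(iv), combine the three factors by bilinearity to get the right argument $\Theta=\Xi_{\frac rm,\frac sm}-\tfrac12\sum_{(b,m)=1}\sigma_{\frac bm}$, and invoke the cited Yoshida relation. The bookkeeping you flag as the obstacle is in fact simpler than you suggest. Proposition~\ref{prpofp}-(ii) says that for the \emph{ratio} $[p_K:p_{K,p}]$ a $\rho$-symmetric right argument contributes $[1:1]\bmod\mu_\infty$, with no stray power of $[2\pi i:(2\pi i)_p]$; so dropping the $-\tfrac12\sum\sigma_{\frac bm}$ term costs nothing. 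The only $(2\pi i)$-contribution is the explicit exponent $E=\zeta(0,c_{\frac rm})+\zeta(0,c_{\frac sm})-\zeta(0,c_{\frac{r+s}m})=\tfrac12-\varepsilon(\tfrac rm,\tfrac sm)$, which equals $\tfrac12$ when $r+s<m$; combining with the $[2\pi i:(2\pi i)_p]^{+1}$ on the left of the Yoshida relation yields the claimed $-\tfrac12$. The degenerate case $m/d_a=2$ is harmless: there $\zeta(0,c_{\frac am})=0$ and the right argument $\sum_b(\tfrac12-\langle\tfrac{ab}{m}\rangle)\sigma_{\frac bm}$ vanishes identically, so your displayed formula for $[\mathrm{P}:\mathrm{P}_p](c_{\frac am})$ already gives $[1:1]$.

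The genuine gap is the case $r+s>m$. There the paper does \emph{not} extend $\Xi_{\frac rm,\frac sm}$ or the cited period relation (which, as stated, is only for the holomorphic form $\eta_{\frac rm,\frac sm}$ with $r+s<m$); your uniform treatment would require the analogue for second-kind differentials, i.e.\ the case $\mathrm{id}\in\rho\circ\Xi$, which you have not justified. Instead the paper applies Proposition~\ref{prpofp}-(ii) on the \emph{left} argument to get
\[
[\mathrm P(c_{\frac rm})\mathrm P(c_{\frac sm})\mathrm P(c_{\frac{r+s}m})^{-1}:\ldots]
\equiv[\mathrm P(c_{\frac{m-r}m})\mathrm P(c_{\frac{m-s}m})\mathrm P(c_{\frac{2m-r-s}m})^{-1}:\ldots]^{-1},
\]
reduces to the already-proved case $(m-r)+(m-s)<m$, and then uses the duality~(\ref{2pii}) between $\eta_{\frac rm,\frac sm}$ and $\eta_{\frac{m-r}m,\frac{m-s}m}$ to convert back. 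You should either carry out this reflection or, if you prefer the uniform route, prove the extension of the period relation to $r+s>m$ explicitly.
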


\begin{proof}
Assume that $(r,m)=d$. Then by Proposition \ref{prpofp}-(iii), (iv) we see that 
\begin{align*}
\mathrm{P}_*(c_{\frac{r}{m}})
&\equiv (2\pi i)_*^{\frac{1}{2}-\frac{r}{m}}p_{\mathbb Q(\zeta_{m/d}),*}
(\sigma_{\frac{r}{m}},\sum_{1\leq b \leq m/d,\ (b,m/d)=1}(\tfrac{1}{2}-\tfrac{b}{m/d}) \sigma_{\frac{b}{m/d}}) \\
&\equiv (2\pi i)_*^{\frac{1}{2}-\frac{r}{m}}p_{\mathbb Q(\zeta_{m/d}),*}
(\mathrm{id},\sum_{1\leq b \leq m/d,\ (b,m/d)=1}(\tfrac{1}{2}-\langle \tfrac{br/d}{m/d}\rangle) \sigma_{\frac{b}{m/d}}) \\
&\equiv (2\pi i)_*^{\frac{1}{2}-\frac{r}{m}}p_{\mathbb Q(\zeta_{m}),*}
(\mathrm{id},\sum_{1\leq b \leq m,\ (b,m)=1}(\tfrac{1}{2}-\langle \tfrac{br}{m}\rangle) \sigma_{\frac{b}{m}}).
\end{align*}
Therefore when $r+s<m$, by Proposition \ref{prpofp}-(ii), (iii), we can write 
\begin{align*}
&[\mathrm{P}(c_{\frac{r}{m}})\mathrm{P}(c_{\frac{s}{m}})\mathrm{P}(c_{\frac{r+s}{m}})^{-1}:
\mathrm{P}_p(c_{\frac{r}{m}})\mathrm{P}_p(c_{\frac{s}{m}})\mathrm{P}_p(c_{\frac{r+s}{m}})^{-1}] \\
&\equiv [2\pi i: (2\pi i)_p]^{\frac{1}{2}}[p_{\mathbb Q(\zeta_m)}:p_{\mathbb Q(\zeta_m),p}](\mathrm{id},\sum_{1\leq b \leq m,\ (b,m)=1}
(\tfrac{1}{2}-\varepsilon(\tfrac{br}{m},\tfrac{bs}{m}))\sigma_{\frac{b}{m}}) \\
&\equiv [2\pi i: (2\pi i)_p]^{\frac{1}{2}}
[p_{\mathbb Q(\zeta_m)}:p_{\mathbb Q(\zeta_m),p}](\mathrm{id},\Xi_{\frac{r}{m},\frac{s}{m}}-\tfrac{1}{2}\mathrm{Hom}(\mathbb Q(\zeta_m),\mathbb C)) \\
&\equiv [2\pi i: (2\pi i)_p]^{\frac{1}{2}}[p_{\mathbb Q(\zeta_m)}:p_{\mathbb Q(\zeta_m),p}](\mathrm{id},\Xi_{\frac{r}{m},\frac{s}{m}}) \\
&\equiv [2\pi i: (2\pi i)_p]^{-\frac{1}{2}}[\int_\gamma \eta_{\frac{r}{m},\frac{s}{m}}:\int_{\gamma,p} \eta_{\frac{r}{m},\frac{s}{m}}].
\end{align*}
When $r+s>m$, by Proposition \ref{prpofp}-(ii) and (\ref{2pii}), we have
\begin{align*}
&[\mathrm{P}(c_{\frac{r}{m}})\mathrm{P}(c_{\frac{s}{m}})\mathrm{P}(c_{\frac{r+s}{m}})^{-1}:
\mathrm{P}_p(c_{\frac{r}{m}})\mathrm{P}_p(c_{\frac{s}{m}})\mathrm{P}_p(c_{\frac{r+s}{m}})^{-1}] \\
&\equiv [\mathrm{P}(c_{\frac{m-r}{m}})\mathrm{P}(c_{\frac{m-s}{m}})\mathrm{P}(c_{\frac{2m-(r+s)}{m}})^{-1}:
\mathrm{P}_p(c_{\frac{m-r}{m}})\mathrm{P}_p(c_{\frac{m-s}{m}})\mathrm{P}_p(c_{\frac{2m-(r+s)}{m}})^{-1}]^{-1} \\
&\equiv [2\pi i: (2\pi i)_p]^{\frac{1}{2}}[\int_\gamma \eta_{\frac{m-r}{m},\frac{m-s}{m}}:\int_{\gamma,p} \eta_{\frac{m-r}{m},\frac{m-s}{m}}]^{-1} \\
&\equiv [2\pi i: (2\pi i)_p]^{-\frac{1}{2}} [\int_\gamma \eta_{\frac{r}{m},\frac{s}{m}}:\int_{\gamma,p} \eta_{\frac{r}{m},\frac{s}{m}}].
\end{align*}
Then the assertion is clear.
\end{proof}

By (\ref{ex}), (\ref{exp}) and Proposition \ref{PP}, we obtain the explicit relation 
between the period-ring-valued functions $\Gamma(c_{\frac{r}{m}}),\Gamma(c_{\frac{r}{m}};D,\mathfrak a_c)$ in this paper 
and the period-ring-valued beta function $\mathfrak B(\frac{i}{m},\frac{j}{m})$ defined in \cite[the first paragraph of \S 7]{Ka2}. 
First we assume that $p \mid m$, $p \nmid rs(r+s)$. We put $d_*:=(m,*)$ for $*=r,s,r+s$. 
Then we have
\begin{align*}
&\frac{\Gamma(c_{\frac{r}{m}})\Gamma(c_{\frac{s}{m}})}{\Gamma(c_{\frac{r+s}{m}})} \\
&\equiv
\frac{\frac{\Gamma(\frac{r}{m})(m/d_r)^{\frac{r}{m}-\frac{1}{2}}(2\pi)^{-\frac{1}{2}}\Gamma(\frac{s}{m})(m/d_s)^{\frac{s}{m}-\frac{1}{2}}(2\pi)^{-\frac{1}{2}}} 
{\Gamma(\langle\frac{r+s}{m}\rangle)(m/d_{r+s})^{\langle \frac{r+s}{m} \rangle -\frac{1}{2}}(2\pi)^{-\frac{1}{2}}}}
{\frac{\Gamma_p(\frac{r}{m})(m/d_r)_0^{\frac{r}{m}-\frac{1}{2}}\Gamma_p(\frac{s}{m})(m/d_s)_0^{\frac{s}{m}-\frac{1}{2}}} 
{\Gamma_p(\langle\frac{r+s}{m}\rangle)(m/d_{r+s})_0^{\langle \frac{r+s}{m} \rangle -\frac{1}{2}}}}
\left(\frac{(2\pi i)_p}{2\pi i}\right)^{-\frac{1}{2}}\frac{\int_{\gamma,p} \eta_{\frac{r}{m},\frac{s}{m}}}{\int_\gamma \eta_{\frac{r}{m},\frac{s}{m}}} \\
&\equiv(2\pi)^{-\frac{1}{2}} \left(p^{\mathrm{ord}_p m}\right)^{-\frac{1}{2}} \frac{B(\frac{r}{m},\frac{s}{m})}{B_p(\frac{r}{m},\frac{s}{m})}
\left(\frac{(2\pi i)_p}{2\pi i}\right)^{-\frac{1}{2}}\frac{\int_{\gamma,p} \eta_{\frac{r}{m},\frac{s}{m}}}{\int_\gamma \eta_{\frac{r}{m},\frac{s}{m}}} \\
&\equiv (2\pi i)_p^{-\frac{1}{2}} \left(p^{\mathrm{ord}_p m}\right)^{-\frac{1}{2}} \mathfrak B(\tfrac{r}{m},\tfrac{s}{m}) \mod \mu_\infty.
\end{align*}
Here we note that
\begin{align*}
&B_*(\tfrac{r}{m},\tfrac{s}{m}):=\frac{\Gamma_*(\frac{r}{m})\Gamma_*(\frac{s}{m})}{\Gamma_*(\frac{r+s}{m})} \quad (*=\emptyset,p), \\
&\Gamma(\langle \tfrac{r+s}{m} \rangle)  \langle \tfrac{r+s}{m} \rangle^{\varepsilon(\frac{r}{m},\frac{s}{m})}=\Gamma(\tfrac{r+s}{m}), \\ 
&\Gamma_p(\langle \tfrac{r+s}{m} \rangle)  \langle \tfrac{r+s}{m} \rangle^{\varepsilon(\frac{r}{m},\frac{s}{m})}
\left(p^{\mathrm{ord}_p m}\right)^{\varepsilon(\frac{r}{m},\frac{s}{m})} \equiv \Gamma(\tfrac{r+s}{m}) \mod \mu_\infty.
\end{align*}
The last line follows from the characterization \cite[(4.4)]{Ka2} of $\Gamma_p$.
Next, assume that $p \nmid m$. We drop the symbols $D,\mathfrak a_c$ from $\Gamma(c_{\frac{r}{m}};D,\mathfrak a_c)$. 
By definition we have
\begin{align*}
\frac{\Gamma(c_{\frac{r}{m}})\Gamma(c_{\frac{s}{m}})}{\Gamma(c_{\frac{r+s}{m}})} 
&\equiv\frac{\Gamma(\tfrac{r}{m})(m/d_r)^{\frac{r}{m}-\frac{1}{2}}(2\pi)^{-\frac{1}{2}}\Gamma(\tfrac{s}{m})(m/d_s)^{\frac{s}{m}-\frac{1}{2}}(2\pi)^{-\frac{1}{2}}} 
{\Gamma(\langle\tfrac{r+s}{m}\rangle)(m/d_{r+s})^{\langle \frac{r+s}{m} \rangle -\frac{1}{2}}(2\pi)^{-\frac{1}{2}}} 
\left(\frac{(2\pi i)_p}{2\pi i}\right)^{-\frac{1}{2}}\frac{\int_{\gamma,p} \eta_{\frac{r}{m},\frac{s}{m}}}{\int_\gamma \eta_{\frac{r}{m},\frac{s}{m}}} \\
&\equiv(2\pi)^{-\frac{1}{2}}
\frac{(m/d_r)^{\frac{r}{m}-\frac{1}{2}}(m/d_s)^{\frac{s}{m}-\frac{1}{2}}} {(m/d_{r+s})^{\langle \frac{r+s}{m} \rangle -\frac{1}{2}}}
B(\tfrac{r}{m},\tfrac{r}{m})\langle \tfrac{r+s}{m} \rangle^{\varepsilon(\frac{r}{m},\frac{s}{m})} 
\left(\frac{(2\pi i)_p}{2\pi i}\right)^{-\frac{1}{2}}\frac{\int_{\gamma,p} \eta_{\frac{r}{m},\frac{s}{m}}}{\int_\gamma \eta_{\frac{r}{m},\frac{s}{m}}} \\
&\equiv (2\pi i)_p^{-\frac{1}{2}}
\frac{(m/d_r)^{\frac{r}{m}-\frac{1}{2}}(m/d_s)^{\frac{s}{m}-\frac{1}{2}}} {(m/d_{r+s})^{\langle \frac{r+s}{m} \rangle -\frac{1}{2}}}
\mathfrak B(\tfrac{r}{m},\tfrac{r}{m}) \mod \mu_\infty.
\end{align*}
Hence \cite[Theorem 7.2]{Ka2} states that we have for $\tau \in W_{\mathbb Q_p}$ 
\begin{align}
&\Phi_\tau\left(\frac{\Gamma(c_{\frac{r}{m}})\Gamma(c_{\frac{s}{m}})}{\Gamma(c_{\frac{r+s}{m}})}\right) 
\equiv 
\frac{\Gamma(c_\tau c_{\frac{r}{m}})\Gamma(c_\tau c_{\frac{s}{m}})}{\Gamma(c_\tau c_{\frac{r+s}{m}})} \mod \mu_\infty 
\quad (p\mid m,\ p\nmid rs(r+s),\ p\neq 2), \label{thm1} \\
&\Phi_\tau\left( \frac{\Gamma(c_{\frac{r}{m}})\Gamma(c_{\frac{s}{m}})}{\Gamma(c_{\frac{r+s}{m}})} \right) 
\equiv 
\frac{\Gamma(c_{\frac{pr}{m}})\Gamma(c_{\frac{ps}{m}})}{\Gamma(c_{\frac{p(r+s)}{m}})} 
\frac{(m/d_r)^{\frac{r}{m}-\langle \frac{pr}{m} \rangle }(m/d_s)^{\frac{s}{m}-\langle \frac{ps}{m} \rangle}}
{(m/d_{r+s})^{\langle \frac{r+s}{m} \rangle -\langle \frac{p(r+s)}{m} \rangle }}  
\frac{p^{\frac{1}{2}-\varepsilon(\frac{r}{m},\frac{s}{m})}}
{\frac{\Gamma_p(\langle\frac{pr}{m}\rangle)\Gamma_p(\langle\frac{ps}{m}\rangle)}{\Gamma_p(\langle\frac{p(r+s)}{m}\rangle)}} \notag \\
&\hspace{200pt} \mod \mu_\infty \quad (p\nmid m,\ \deg_{(p)} \tau =1). \notag
\end{align}
When $p\nmid m$, by \cite[Remark 4.5]{Ka2}, we have 
\begin{align*}
\prod_{\tilde c\in C_{(pm/d_r)},\ \phi(\tilde c)=c_{\frac{pr}{m}}}\exp_p(X_p(\tilde c))
&\equiv \prod_{\substack{1\leq a \leq pm/d_r \\ (a,pm/d_r)=1 \\ a\equiv pr/d_r \bmod m/d_r}}\Gamma_p(\tfrac{a}{pm/d_r})(m/d_r)^{\frac{a}{mp}-\frac{1}{2}} \\
&\equiv \Gamma_p(\langle\tfrac{pr}{m}\rangle) (m/d_r)^{\langle\frac{pr}{m}\rangle-\frac{r}{m}} \mod \mu_\infty,
\end{align*}
where $\phi \colon C_{(pm/d_r)} \ra C_{(m/d_r)}$ denotes the natural projection. 
We put 
\begin{align*}
\gamma_p(c_{\frac{pr}{m}}):=p^{\zeta(0,c_{\frac{r}{m}})}\prod_{\tilde c\in C_{(pm/d_r)},\ \phi(\tilde c)=c_{\frac{pr}{m}}}\exp_p(X_p(\tilde c)).
\end{align*}
Then the case $p\nmid m$ can be rewritten into 
\begin{align} 
\Phi_\tau\left( \frac{\Gamma(c_{\frac{r}{m}})\Gamma(c_{\frac{s}{m}})}{\Gamma(c_{\frac{r+s}{m}})} \right) 
\equiv 
\frac{\Gamma(c_{\frac{pr}{m}})\Gamma(c_{\frac{ps}{m}})}{\Gamma(c_{\frac{p(r+s)}{m}})} 
\frac{\gamma_p(c_{\frac{p(r+s)}{m}})}{\gamma_p(c_{\frac{pr}{m}})\gamma_p(c_{\frac{ps}{m}})} \mod \mu_\infty \quad (p\nmid m,\ \deg \tau =1). \label{thm2} 
\end{align}
Now we derive the desired formula 
\begin{align} \label{thm3}
\Phi_\tau (\Gamma(c_{\frac{r}{m}})) \equiv 
\begin{cases}
\Gamma(c_\tau c_{\frac{r}{m}}) & (p\mid m,\ p\nmid r) \\
\Gamma(c_{\frac{pr}{m}})\gamma_p(c_{\frac{pr}{m}}) & (p\nmid m,\ \deg \tau =1)
\end{cases}
\mod \mu_\infty
\end{align} 
when $p\neq 2$, $(r,m)=1$, by induction on $t:=\mathrm{ord}_2\, m$.
When $t=0$, let $f$ be the order of $2$ in $(\mathbb Z/m\mathbb Z)^\times$.
Then we have
\begin{align*}
\prod_{k=0}^{f-1}\left(\frac{\Gamma(c_{\frac{2^kr}{m}})\Gamma(c_{\frac{2^kr}{m}})}{\Gamma(c_{\frac{2^{k+1}r}{m}})}\right)^{2^{f-1-k}}
=\frac{\Gamma(c_{\frac{r}{m}})^{2^f}}{\Gamma(c_{\frac{2^f r}{m}})}
=\Gamma(c_{\frac{r}{m}})^{2^f-1}.
\end{align*}
A similar property holds for $\gamma_p(c_{\frac{pr}{m}})$, so (\ref{thm3}) with $t=0$ follows from (\ref{thm1}), (\ref{thm2}).
The case $\mathrm{ord}_2\, m=t>0$ follows from the case $\mathrm{ord}_2\, m/2=t-1$ and (\ref{thm1}), (\ref{thm2}) since we have
\begin{align*}
\Gamma(c_{\frac{r}{m}})^2=\Gamma(c_{\frac{r}{m/2}})\frac{\Gamma(c_{\frac{r}{m}})\Gamma(c_{\frac{r}{m}})}{\Gamma(c_{\frac{r}{m/2}})}, \ 
\gamma_p(c_{\frac{pr}{m}})^2=\gamma_p(c_{\frac{pr}{m/2}})\frac{\gamma_p(c_{\frac{pr}{m}})\gamma_p(c_{\frac{pr}{m}})}{\gamma_p(c_{\frac{pr}{m/2}})}.
\end{align*}
Then the assertion of Theorem \ref{cns3} is clear.

\section{Partial results when $F\neq \mathbb Q$}

In this section, we prove the following partial result on Conjectures \ref{Yc}, \ref{maincnj}.

\begin{thm} \label{partial}
Let notations be as in {\rm Definition \ref{maindfn}, Conjecture \ref{maincnj}}, and the assumption {\rm (\ref{assump})}.
Let $K$ be an intermediate field of $H_\mathfrak f/F$, $\phi_K \colon C_\mathfrak f \ra \mathrm{Gal}(K/F)$ the Artin map, and $\sigma \in \mathrm{Gal}(K/F)$.
Assume that 
\begin{itemize}
\item There exists $f \in \mathbb N$ satisfying $\mathfrak f=(f)$.
\item $K$ is abelian over $\mathbb Q$. 
\end{itemize}
Then under some additional assumptions, our conjectures holds true if we add $\prod_{c \in \phi_K^{-1}(\sigma)}$ to both sides, as follows.
\begin{enumerate}
\item Concerning {\rm Conjecture \ref{Yc}}, we have
\begin{align*}
\prod_{c \in \phi_K^{-1}(\sigma)}\exp(X(c))
\equiv \prod_{c \in \phi_K^{-1}(\sigma)}
\left(\pi^{\zeta(0,c)}p_{H_{\mathrm{CM}}}(\mathrm{Art}(c),\sum_{c' \in C_{\mathfrak f}} \tfrac{\zeta(0,c')}{[H_\mathfrak f:H_{\mathrm{CM}}]} \mathrm{Art}(c'))\right)
\mod \overline{\mathbb Q}^\times.
\end{align*}
The same is true for the assumption {\rm (\ref{assump})}: for a suitable $\chi_{w\Xi}$, we have 
\begin{align*}
\prod_{c \in \phi_K^{-1}(\sigma)} \exp(X(c;D,\mathfrak a_c)) \equiv 
\prod_{c \in \phi_K^{-1}(\sigma)} \mathrm{Per}(\mathrm{Art}(c),\chi_{w\Xi})^{\frac{1}{2w}}
\mod (H_{\mathrm{CM}}^\times)^\mathbb Q.
\end{align*}
\item Concerning {\rm Conjecture \ref{maincnj}-(i)}, we additionally assume that $p\neq 2$ and that $p$ remains prime in $F$. 
Then we have for $\tau \in W_{F_\mathfrak p}$ with $\deg_\mathfrak p \tau =1$
\begin{align*}
&\Phi_\tau\left(\prod_{c \in \phi_K^{-1}(\sigma)}  \Gamma(c;D,\mathfrak a_c) \right) \\
&\equiv \prod_{c \in \phi_K^{-1}(\sigma)} \left(\frac{\pi_{\mathfrak p}^{\frac{\zeta(0,c)}{h_F^+}}}
{\prod_{\tilde c \in \phi^{-1}( [\mathfrak p]c)}\exp_p(X_p(\tilde c;D,\mathfrak a_c))}
\Gamma([\mathfrak p]c;D,\mathfrak p \mathfrak a_c)\right) \mod \mu_\infty.
\end{align*}
\item Concerning {\rm Conjecture \ref{maincnj}-(ii)},  we additionally assume that $p\neq 2$. Then we have for $\tau \in W_{F_\mathfrak p}$
\begin{align*}
\Phi_\tau\left(\prod_{c \in \phi_K^{-1}(\sigma)} \Gamma(c)\right) \equiv \prod_{c \in \phi_K^{-1}(\sigma)} \Gamma(c_\tau c)
\mod \mu_\infty.
\end{align*}
\end{enumerate}
In particular, if $H_\mathfrak f/\mathbb Q$ is abelian, then {\rm Conjecture \ref{Yc}} and the assumption {\rm (\ref{assump})} hold true.
Additionally assume that $p\neq 2$ and that $\mathfrak p=p\mathcal O_F \nmid \mathfrak f$ or $\mathfrak p\mid \mathfrak f$.
Then {\rm Conjecture \ref{maincnj}} also holds true.
\end{thm}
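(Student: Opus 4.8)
\emph{Plan of proof.} The aim is to reduce all of (i), (ii), (iii) to the already-settled case $F=\mathbb Q$ (Theorem \ref{cns3}, together with the explicit formulas (\ref{ex}), (\ref{exp}), Proposition \ref{PP} and the relations (\ref{thm1})--(\ref{thm3})); the hypotheses $\mathfrak f=(f)$ and ``$K$ abelian over $\mathbb Q$'' are exactly what makes this possible. The analytic input is the factorization of $L$-functions: since $K/\mathbb Q$ is abelian, for any character $\chi$ of $C_\mathfrak f$ factoring through $\phi_K$ the representation $\mathrm{Ind}_{F/\mathbb Q}\chi$ already factors through the abelian group $\mathrm{Gal}(K/\mathbb Q)$, hence is a sum of Dirichlet characters $\psi_1,\dots,\psi_n$, so $L_F(s,\chi)=\prod_{j=1}^n L_{\mathbb Q}(s,\psi_j)$; and since $\mathfrak f=(f)$, the conductors of the $\psi_j$ divide a fixed $N$ depending only on $f$ and $\mathrm{disc}(F)$, so the $\psi_j$ correspond to ideal classes of $\mathbb Q$ modulo $(N)$. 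Unwinding this through Fourier inversion on $\mathrm{Gal}(K/F)$ and the definition of $X(c)$ — using (\ref{V}) and (\ref{indep1}) to see that the $V$- and $W$-terms contribute only $\log$'s of units and of algebraic numbers, absorbed into $\oq^\times$, or, after passing to the ratio $\exp(X)/\exp_p(X_p)$ defining $\Gamma$, into $\mu_\infty$ — one obtains a ``norm-compatibility'' identity expressing $\prod_{c\in\phi_K^{-1}(\sigma)}\exp(X(c;D,\mathfrak a_c))$ and $\prod_{c\in\phi_K^{-1}(\sigma)}\exp_p(X_p(c;D,\mathfrak a_c))$ as finite products of the $\mathbb Q$-invariants $\exp(X(c_{r/N}))$ and $\exp_p(X_p(c_{r/N}))$ of \S 6.

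On the period side the same factorization, via Fourier inversion and Proposition \ref{prpofp}-(iii),(iv), transports the symbols $p_{H_{\mathrm{CM}}}$, $p_{H_{\mathrm{CM}},p}$ appearing in $\mathrm P(c)$, $\mathrm P_p(c)$ along the inclusion $H_{\mathrm{CM}}\subseteq\mathbb Q(\zeta_N)$ into products of $\mathbb Q(\zeta_N)$-symbols of the precise shape occurring in the proof of Proposition \ref{PP}; thus $\prod_{c\in\phi_K^{-1}(\sigma)}\mathrm P(c)$ and $\prod_{c\in\phi_K^{-1}(\sigma)}\mathrm P_p(c)$ get matched (up to $\oq^\times$, resp.\ $\mu_\infty$) with products of the $\mathbb Q$-side quantities $\mathrm P(c_{r/N})$, $\mathrm P_p(c_{r/N})$, with the \emph{same} exponents as in the first paragraph. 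Since Conjecture \ref{Yc} and the assumption (\ref{assump}) are both known for $F=\mathbb Q$ — they follow from Rohrlich's formula (\ref{rh}) — comparing the two $\mathbb Q$-side products term by term yields (i); the half of (i) concerning (\ref{assump}) uses in addition the motivic reformulation (\ref{pptoPP}) and the fact that over $\mathbb Q(\zeta_N)$ the motive $M(\chi_{w\Xi})$ is, up to a Tate twist, cut out of a product of Fermat Jacobians, so that $\mathrm{Per}(\mathrm{Art}(c),\chi_{w\Xi})^{1/2w}$ becomes a product of $\Gamma$-values over $\mathbb Q$.

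By the previous two paragraphs, after applying $\prod_{c\in\phi_K^{-1}(\sigma)}$ the function $\Gamma(c)$ or $\Gamma(c;D,\mathfrak a_c)$ becomes, modulo $\mu_\infty$, a product of the $\mathbb Q$-side functions $\Gamma(c_{r/N})$, $\Gamma(c_{r/N};D,\mathfrak a_c)$ and the auxiliary factors $\gamma_p(c_{pr/N})$ of (\ref{thm3}). Every $\tau\in W_{F_\mathfrak p}$ lies in $W_{\mathbb Q_p}$, and under the commuting square of Artin maps $C_\mathfrak f\twoheadrightarrow\mathrm{Gal}(K/F)$ and $C_{(N)}\twoheadrightarrow\mathrm{Gal}(K/\mathbb Q)$ the classes $c_\tau$ on the two sides correspond; hence (iii), where $\mathfrak p\mid\mathfrak f$ so that no transport of $\mathfrak p$ is needed, follows at once from (\ref{thm3}), the condition $p\neq2$ being the one already required in Theorem \ref{cns3}. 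For (ii), where $\mathfrak p\nmid\mathfrak f$, the extra hypothesis that $p$ is inert in $F$ — $\mathfrak p=p\mathcal O_F$, with $F_\mathfrak p/\mathbb Q_p$ unramified of degree $n=[F:\mathbb Q]$ — is exactly what makes the enlargement of the modulus from $\mathfrak f$ to $\mathfrak{fp}$ over $F$ correspond to the $p$-shift over $\mathbb Q$ underlying $\gamma_p(c_{pr/N})$ in (\ref{thm2})--(\ref{thm3}): a $\tau$ with $\deg_\mathfrak p\tau=1$ has $\deg_p\tau=n$, the factor $\pi_\mathfrak p^{\zeta(0,c)/h_F^+}$ matches $p^{\zeta(0,c_{r/N})}$, $[\mathfrak p]$ restricts to the class of $p$, and prime-to-$p$ parts of ideals over $F$ correspond to those over $\mathbb Q$; so (ii) follows from Theorem \ref{cns3}, iterated $n$ times. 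Finally, taking $K=H_\mathfrak f$ — legitimate precisely when $H_\mathfrak f/\mathbb Q$ is abelian, in which case $\phi_K$ is an isomorphism and every $\phi_K^{-1}(\sigma)$ is a singleton, so each $\prod_{c\in\phi_K^{-1}(\sigma)}$ disappears — converts (i), (ii), (iii) into the bare assertions of Conjecture \ref{Yc}, (\ref{assump}) and Conjecture \ref{maincnj}, which is the last sentence of the theorem.

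The step I expect to be the main obstacle is the norm-compatibility identity of the first two paragraphs: one must show that the transcendental part of $\exp(X(c))$ over $F$ is governed by precisely the $\Gamma$-values produced by $L_F(s,\chi)=\prod_j L_{\mathbb Q}(s,\psi_j)$, that the same is true character-wise for the CM-period part $\mathrm P(c)$, and — crucially for (ii) and (iii) — to control the algebraic discrepancy down to $\mu_\infty$, so that the genuinely non-torsion $V$- and $W$-contributions cancel in the ratio $\exp(X(c))/\exp_p(X_p(c))$. The $p$-adic half of this requires the parallel factorization of $p$-adic $L$-functions and the $p$-adic multiple-gamma/Coleman input already packaged in (\ref{exp}) and \cite{Ka2}.
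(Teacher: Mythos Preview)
Your overall strategy is exactly the paper's: factor $L_\mathfrak f(s,\chi)=\prod_{\psi|_H=\chi}L_d(s,\psi)$ for an integer $d$ (the lcm of $f$ and the conductor of $K/\mathbb Q$), Fourier-invert on $\mathrm{Gal}(K/F)$ to obtain rational exponents $r(c,\sigma)$ over $C_{(d)}$, apply the known $F=\mathbb Q$ case to each factor, and for (ii) use $\deg_p\tau=[F:\mathbb Q]$ to iterate. Your treatment of the period side via Proposition~\ref{prpofp}-(iii),(iv) and the final specialization $K=H_\mathfrak f$ are also the paper's.

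The gap is the ``norm-compatibility identity'' you yourself flag as the main obstacle. You propose to unwind directly from the definition of $X(c)$ using (\ref{V}) and (\ref{indep1}), but Barnes' multiple gamma in $n$ variables does not split into a product of classical $\Gamma$-values by any such bookkeeping; the $G$-term over $F$ has no direct link to the $G$-term over $\mathbb Q$. The paper's resolution goes through $\zeta'(0,c)$ instead: Shintani's formula \cite[Theorem~1]{Ka3} gives
\[
\exp(\zeta'(0,c))=\prod_{\iota\in\HFR}\exp\bigl(X(\iota(c);\iota(D),\iota(\mathfrak a_c))\bigr),
\]
and the hypotheses $\mathfrak f=(f)$ with $F/\mathbb Q$ normal force $C_{\iota(\mathfrak f)}=C_\mathfrak f$ for every $\iota$, while $K/\mathbb Q$ abelian forces the \emph{set} $\{\iota(c):c\in\phi_K^{-1}(\sigma)\}$ to be independent of $\iota$. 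Combined with (\ref{indep1}) this gives
\[
\prod_{c\in\phi_K^{-1}(\sigma)}\exp(\zeta'(0,c))\equiv\prod_{c\in\phi_K^{-1}(\sigma)}\exp(X(c;D,\mathfrak a_c))^{[F:\mathbb Q]}\pmod{E_+^{\mathbb Q}},
\]
which is the bridge you need; the Fourier-inverted identity $|H|\sum_{c\in\phi_K^{-1}(\sigma)}\zeta'(0,c)=\sum_{c\in C_{(d)}}r(c,\sigma)\zeta'(0,c)$ then connects the left side to the $\mathbb Q$-invariants. For (ii) and (iii) the paper uses the $p$-adic Shintani formula \cite[Theorem~6.2]{Ka1}, \cite[Theorem~3.1]{KY1} together with (\ref{indep2}) in exactly the same way, so that the discrepancy is controlled down to $\mu_\infty$ rather than merely $E_+^{\mathbb Q}$; you should invoke these explicitly rather than hoping the $V$- and $W$-terms cancel on their own.

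One further point on (ii): the reduction also needs the analogue of the $L$-function factorization for the \emph{imprimitive} $p$-adic $L$-functions attached to $C_{\mathfrak{fp}}$ and $C_{(dp)}$, and the corresponding exponents $r'(c,\sigma)$ differ from $r(c,\sigma)$; the paper computes $r'(c,\mathrm{Frob}_\mathfrak p\sigma)=\sum_{i=1}^{[F:\mathbb Q]}r([(p)^{-i}]c,\sigma)$ using that $\mathrm{Gal}(F/\mathbb Q)$ is cyclic generated by $\mathrm{Frob}_p$ (another use of ``$p$ inert''), which is what makes the $\exp_p(X_p)$-factors line up with the iterated $\gamma_p$-factors from Theorem~\ref{cns3}.
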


\begin{proof}
Assume that $\mathfrak f=(f)$ with $f \in \mathbb N$ and that an intermediate field $K$ of $H_\mathfrak f/F$ is abelian over $\mathbb Q$. 
Then $F$ is also abelian over $\mathbb Q$.
We put $G:=\mathrm{Gal}(K/\mathbb Q)$, $H:=\mathrm{Gal}(K/F)$ and denote by $\hat G,\hat H$ the groups of all characters of $G,H$ respectively.
For $\chi \in \hat H$, we consider the prime-to-$\mathfrak f$ part of the $L$-function:
\begin{align*}
L_\mathfrak f(s,\chi):=\sum_{c \in C_\mathfrak f} \chi(c)\zeta(s,c).
\end{align*}
Strictly speaking, we $\chi(c):=\chi(\phi_K(c))$.
Let $d$ be the least common multiple of $f$ and the conductor of $K/\mathbb Q$, 
$\zeta_d:=e^{\frac{2\pi i}{d}}$, $\mathrm{Art} \colon C_{(d)}\ra \mathrm{Gal}(\mathbb Q(\zeta_d)/\mathbb Q)$ the Artin map.
For $\psi \in \hat G$ we similarly consider 
\begin{align*}
L_d (s,\psi):=\sum_{c \in C_{(d)}} \psi(c)\zeta(s,c).
\end{align*}
Then we have
\begin{align} \label{funeq}
L_\mathfrak f(s,\chi)=\prod_{\psi \in \hat G,\ \psi|_H=\chi}L_d (s,\psi) \qquad (\chi \in \hat H).
\end{align}
It follows that 
\begin{align}
|H| \sum_{c \in \phi_K^{-1}(\sigma)}\zeta'(0,c) &= \sum_{\chi \in \hat H} \chi(\sigma^{-1})L_\mathfrak f'(0,\chi) \notag \\
&=\sum_{\chi \in \hat H} \chi(\sigma^{-1})\sum_{\psi_1 \in \hat G,\ \psi_1|_H=\chi}L_d'(0,\psi_1) 
\prod_{\psi_2 \in \hat G,\ \psi_2|_H=\chi,\ \psi_2 \neq \psi_1}L_d(0,\psi_1) \notag \\
&=\sum_{\psi_1 \in \hat G}\psi_1(\sigma^{-1}) L_d'(0,\psi_1) 
\prod_{\psi_2 \in \hat G,\ \psi_2|_H=\psi_1|_H,\ \psi_2 \neq \psi_1}L_d(0,\psi_2) \notag \\
&=\sum_{c \in C_{(d)}} r(c,\sigma) \cdot \zeta'(0,c), \label{zdtoz'}
\end{align}
where we put
\begin{align*}
r(c,\sigma):=\sum_{\psi_1 \in \hat G}\psi_1(\sigma^{-1})\psi_1(c)  \prod_{\psi_2 \in \hat G,\ \psi_2|_H=\psi_1|_H,\ \psi_2 \neq \psi_1}L_d(0,\psi_2) \in \mathbb Q.
\end{align*}
Recall that Conjecture \ref{Yc} in the case $F=\mathbb Q$ follows from Rohrlich's formula (\ref{rh}).
Then we have
\begin{align}
&\prod_{c \in \phi_K^{-1}(\sigma)} \exp(\zeta'(0,c))^{|H|} \notag \\
&\equiv \prod_{c\in C_{(d)}}\left((2\pi i)^{\zeta(0,c)}
p_{\mathbb Q(\zeta_d)}(\mathrm{Art}(c),
\sum_{c' \in C_{(d)}} \zeta(0,c') \mathrm{Art}(c')) \right)^{r(c,\sigma)} \mod \oq^\times. \label{prodofp}
\end{align}
The sum of the exponents of $2\pi i$ in (\ref{prodofp}) is equal to
\begin{align}
\sum_{c\in C_{(d)}}r(c,\sigma)\zeta(0,c)&=
\sum_{\psi_1 \in \hat G}\psi_1(\sigma^{-1})  L_d(0,\psi_1) \prod_{\psi_2 \in \hat G,\ \psi_2|_H=\psi_1|_H,\ \psi_2 \neq \psi_1}L_d(0,\psi_2) \notag \\
&=|G|\sum_{c \in \phi_K^{-1}(\sigma)}\zeta(0,c). \label{ztoz}
\end{align}
By Proposition \ref{prpofp}-(iii), we have
\begin{align} 
&\prod_{c\in C_{(d)}}p_{\mathbb Q(\zeta_d)}(\mathrm{Art}(c),\sum_{c' \in C_{(d)}} \zeta(0,c') \mathrm{Art}(c'))^{r(c,\sigma)} \notag \\
&\equiv p_{\mathbb Q(\zeta_d)}(\mathrm{id},\sum_{c\in C_{(d)}} \sum_{c' \in C_{(d)}} r(c,\sigma)\cdot \zeta(0,c') \mathrm{Art}(c^{-1}c')). \label{p1}
\end{align}
We easily see that
\begin{align} \label{p2}
\sum_{c\in C_{(d)}} \sum_{c' \in C_{(d)}} r(c,\sigma)\cdot \zeta(0,c') \mathrm{Art}(c^{-1}c')
&=|G| \cdot \mathrm{Inf} \left( \sum_{c' \in C_\mathfrak f}\zeta(0,c') \phi_K(c') \sigma^{-1}\right),
\end{align}
where $\mathrm{Inf} \colon I_{K,\mathbb Q} \ra I_{\mathbb Q(\zeta_d),\mathbb Q}$ denotes the inflation map.
Combining the above, we obtain 
\begin{align*}
\prod_{c \in \phi_K^{-1}(\sigma)} \exp(\zeta'(0,c))^{\frac{1}{[F:\mathbb Q]}} 
&\equiv \prod_{c \in \phi_K^{-1}(\sigma)} \left((2\pi i)^{\zeta(0,c)}
p_{\mathbb Q(\zeta_d)}(\mathrm{id},\mathrm{Inf} ( \sum_{c' \in C_\mathfrak f}\frac{\zeta(0,c')}{|\ker \phi_K|} \phi_K(c')\sigma^{-1})) \right) \\
&\equiv \prod_{c \in \phi_K^{-1}(\sigma)} P(c) \mod \oq^\times. 
\end{align*}
Here the last equality follows from Propositions \ref{prpofp}-(iv), 4-(iii).
Shintani's formula \cite[Theorem 1]{Ka3} states that 
\begin{align*}
\exp(\zeta'(0,c))=\prod_{\iota \in \HFR}\exp(X(\iota(c);\iota(D),\iota(\mathfrak a_c))),
\end{align*}
where $\iota(c),\iota(D),\iota(\mathfrak a_c)$ are an ideal class modulo $\iota(\mathfrak f)$, a Shintani domain of $\iota(F)$, 
an integral ideal of $\iota(F)$, respectively.
Since $\mathfrak f=(f)$ with $f \in \mathbb N$ and $F/\mathbb Q$ is normal,
we have $C_\mathfrak f=C_{\iota(\mathfrak f)}$ for any $\iota \in \HFR$.
Moreover the subset 
\begin{align*}
\{\iota(c)  \mid c \in \phi_K^{-1}(\sigma)\}
\end{align*}
of $C_\mathfrak f$ does not depend on $\iota \in \HFR$ since $K/\mathbb Q$ is abelian.
It follows that 
\begin{align*}
\prod_{\iota \in \HFR}\exp(X(\iota(c);\iota(D),\iota(\mathfrak a_c))) \equiv \exp(X(c;D,\mathfrak a_c))^{[F:\mathbb Q]} \mod E_+^\mathbb Q
\end{align*}
by (\ref{indep1}). 
Then the first assertion of (i) is clear.
Almost the same argument works for the assumption (\ref{assump}), although we need to decrease the ambiguity:
we may replace $\bmod \overline{\mathbb Q}^\times$ with $\bmod \mu_\infty$ 
since there exists a clothed path $\gamma$ ($:=n\gamma_n$ with $\gamma_n$ in \cite[Proposition 4.9]{Ot}) of $F_n(\mathbb C)$ satisfying 
$\int_\gamma \eta_{r,s} = \frac{\Gamma(\frac{r}{n})\Gamma(\frac{s}{n})}{\Gamma(\frac{r+s}{n})}$.

For (iii), the assumption $\mathfrak p \mid \mathfrak f=(f)$ implies that $p \mid d$ and that $\mathfrak p' \mid \mathfrak f$ for any $\mathfrak p' \mid p$.
Hence there exist $p$-adic interpolation functions $\zeta_p(s,c)$ of $\zeta(s,c)$ for $c \in C_\mathfrak f$ or $c \in C_{(d)}$.
We put 
\begin{align*}
L_{\mathfrak f,p}(s,\chi)&:=\sum_{c \in C_\mathfrak f} \chi(c)\zeta_p(s,c)  \qquad (\chi \in \hat G), \\
L_{d,p}(s,\psi)&:=\sum_{c \in C_{(d)}} \psi(c)\zeta_p(s,c) \qquad (\psi \in \hat H),
\end{align*}
which satisfy the same functional equation as (\ref{funeq}).
By Shintani's formula, its $p$-adic analogue \cite[Theorem 6.2]{Ka1}, \cite[Theorem 3.1]{KY1} 
and (\ref{indep2}), we have for $c \in C_\mathfrak f$
\begin{align*}
[\exp(\zeta'(0,c)):\exp_p(\zeta_p'(0,c))] \equiv 
[\exp(X(c;D,\mathfrak a_c))^{[F:\mathbb Q]}:\exp_p(X_p(c;D,\mathfrak a_c))^{[F:\mathbb Q]}] \mod \mu_\infty.
\end{align*}
Hence, by a similar argument to that in the previous paragraph, we obtain
\begin{align*}
\prod_{c \in \phi_K^{-1}(\sigma)} \Gamma(c) \equiv\prod_{c \in C_{(d)}} \Gamma(c)^{\frac{r(c,\sigma)}{|G|}}  \mod \mu_\infty.
\end{align*}
Then the assertion of (iii) is reduced to Theorem \ref{cns3}.

Concerning (ii), we consider
\begin{align*}
\gamma(\sigma)&:=\frac{\Phi_\tau\left(\prod_{c \in \phi_K^{-1}(\sigma)}  \Gamma(c;D,\mathfrak a_c) \right)}
{\prod_{c \in \phi_K^{-1}(\sigma)} \left(\frac{\pi_{\mathfrak p}^{\frac{\zeta(0,c)}{h_F^+}}}
{\prod_{\tilde c \in \phi^{-1}( [\mathfrak p]c)}\exp_p(X_p(\tilde c;D,\mathfrak a_c))}
\Gamma([\mathfrak p]c;D,\mathfrak p \mathfrak a_c)\right)} \quad (\sigma \in \mathrm{Gal}(K/F)), \\
\gamma(c)&:=\frac{\Phi_\tau \left(\Gamma(c) \right)}
{\prod_{i=1}^{[F:\mathbb Q]}\left(\frac{p^{\zeta(0,[(p)^{i-1}]c)}}{\prod_{\tilde c \in \phi^{-1}([(p)^i]c)}\exp_p(\zeta'(0,\tilde c))}\right)\Gamma([(p)^{[F:\mathbb Q]}]c)} \quad (c \in C_{(d)})
\end{align*}
for $\tau \in W_{F_\mathfrak p}$ with $\deg_\mathfrak p\tau=1$, 
where two canonical maps $C_{\mathfrak {fp}} \ra C_\mathfrak f$, $C_{(dp)} \ra C_{(d)}$ are denoted by the same symbol $\phi$.
We dropped the symbols $D,\mathfrak a_c$ in the case $F=\mathbb Q$.
Note that Theorem \ref{cns3} implies that 
\begin{align} \label{irt}
\gamma(c)\equiv 1 \bmod \mu_\infty
\end{align}
since $\deg_{(p)}\tau=[F:\mathbb Q]$ as an element of $W_{\mathbb Q_p}$ by the assumption $\mathfrak p=p\mathcal O_F$.
Let $\tilde\tau \in \mathrm{Aut}(\mathbb C/F)$ be any lift 
of $\tau \in W_{F_\mathfrak p} \subset \mathrm{Gal}(\overline{F_\mathfrak p}/F_\mathfrak p) \subset \mathrm{Gal}(\overline{\mathbb Q}/F)$.
Then the ratio
\begin{align*}
[\frac{\tilde\tau\left(\prod_{c \in \phi_K^{-1}(\sigma)}  \exp(X(c;D,\mathfrak a_c))\right)}{\prod_{c \in \phi_K^{-1}(\sigma)}  \exp(X([\mathfrak p]c;D,\mathfrak p\mathfrak a_c))}:
\prod_{c \in \phi_K^{-1}(\sigma)} 
\frac{\pi_{\mathfrak p}^{\frac{\zeta(0,c)}{h_F^+}}}{\prod_{\tilde c \in \phi^{-1}( [\mathfrak p]c)}\exp_p(X_p(\tilde c;D,\mathfrak a_c))}]
\end{align*}
does not depend on the choices of $D,\mathfrak a_c,\pi_\mathfrak q$ as we saw in the proof of Proposition \ref{cst}.
Therefore, by classical or $p$-adic Shintani's formula, the $[F:\mathbb Q]$th power of this ratio is equal to
\begin{align*}
[\frac{\tilde\tau\left(\prod_{c \in \phi_K^{-1}(\sigma)}  \exp(\zeta'(0,c))\right)}{\prod_{c \in \phi_K^{-1}(\sigma)}  \exp(\zeta'(0,[\mathfrak p]c))}:
\prod_{c \in \phi_K^{-1}(\sigma)} 
\frac{\pi_{\mathfrak p}^{\frac{[F:\mathbb Q]\zeta(0,c)}{h_F^+}}}
{\prod_{\tilde c \in \phi^{-1}( [\mathfrak p]c)}\exp_p(\zeta_p'(0,\tilde c))}] 
\end{align*}
$\mod \mu_\infty$.
By (\ref{zdtoz'}), (\ref{ztoz}) and $\mathrm{Frob}_\mathfrak p=\mathrm{Frob}_{p}^{[F:\mathbb Q]}$, we have
\begin{align*}
&\prod_{c \in \phi_K^{-1}(\sigma)}  \exp(\zeta'(0,c))=\prod_{c \in C_{(d)}}\exp(\zeta'(0,c))^\frac{r(c,\sigma)}{|H|}, \\
&\prod_{c \in \phi_K^{-1}(\sigma)}  \exp(\zeta'(0,[\mathfrak p]c))=\prod_{c \in C_{(d)}}\exp(\zeta'(0,c))^\frac{r(c,\mathrm{Frob}_\mathfrak p\sigma)}{|H|}
=\prod_{c \in C_{(d)}}\exp(\zeta'(0,[(p)]^{[F:\mathbb Q]}c))^\frac{r(c,\sigma)}{|H|}, \\
&\prod_{c \in \phi_K^{-1}(\sigma)} \pi_{\mathfrak p}^{\frac{[F:\mathbb Q]\zeta(0,c)}{h_F^+}}\equiv \prod_{c \in C_{(d)}} \left(p^{\zeta(0,c)}\right)^\frac{r(c,\sigma)}{|H|}.
\end{align*}
For $p$-adic invariants, we consider the $p$-adic interpolation of the prime-to-$p$ part of (\ref{funeq}):
\begin{align*}
\sum_{c\in C_\mathfrak f}\chi(c)\prod_{\tilde c \in \phi^{-1}(c)}\zeta_p(s,\tilde c)
=\prod_{\psi \in \hat G,\ \psi|_H=\chi}\sum_{c \in C_{(d)}}\psi(c)\prod_{\tilde c \in \phi^{-1}(c)}\zeta_p(s,\tilde c).
\end{align*}
Then we have
\begin{align*}
\prod_{c \in \phi_K^{-1}(\sigma)} \prod_{\tilde c \in \phi^{-1}( [\mathfrak p]c)}\exp_p(\zeta_p'(0,\tilde c))
\equiv 
\prod_{c \in C_{(d)}}\prod_{\tilde c \in \phi^{-1}(c)}\exp_p(\zeta_p'(0,\tilde c))^\frac{r'(c,\mathrm{Frob}_\mathfrak p\sigma)}{|H|} \mod \mu_\infty
\end{align*}
for
\begin{align*}
r'(c,\sigma):=\sum_{\psi_1 \in \hat G}\psi_1(\sigma^{-1})\psi_1(c)  \prod_{\psi_2 \in \hat G,\ \psi_2|_H=\psi_1|_H,\ \psi_2 \neq \psi_1}L_{dp}(0,\psi_2).
\end{align*}
Since $G/H=\mathrm{Gal}(F/\mathbb Q)$ is the cyclic group generated by $\mathrm{Frob}_p$, we have
\begin{align*}
\prod_{\psi_2 \in \hat G,\ \psi_2|_H=\psi_1|_H,\ \psi_2 \neq \psi_1}L_{dp}(0,\psi_2)
&=\prod_{i=1}^{[F:\mathbb Q]-1}(1-\psi_1(p)e^{\frac{2 \pi \sqrt{-1} i}{[F:\mathbb Q]}})\prod_{\psi_2 \in \hat G,\ \psi_2|_H=\psi_1|_H,\ \psi_2 \neq \psi_1}L_{d}(0,\psi_2) \\
&=\left(\sum_{i=0}^{[F:\mathbb Q]-1}\psi_1(p^i)\right)\prod_{\psi_2 \in \hat G,\ \psi_2|_H=\psi_1|_H,\ \psi_2 \neq \psi_1}L_{d}(0,\psi_2).
\end{align*}
It follows that
\begin{align*}
r'(c,\mathrm{Frob}_\mathfrak p\sigma)=\sum_{i=0}^{[F:\mathbb Q]-1} r([(p)^i]c,\mathrm{Frob}_\mathfrak p\sigma)=\sum_{i=1}^{[F:\mathbb Q]} r([(p)^{-i}]c,\sigma).
\end{align*}
Summarizing the above, we get
\begin{align*}
&[\frac{\tilde\tau\left(\prod_{c \in \phi_K^{-1}(\sigma)}  \exp(X(c;D,\mathfrak a_c))\right)}{\prod_{c \in \phi_K^{-1}(\sigma)}  \exp(X([\mathfrak p]c;D,\mathfrak p\mathfrak a_c))}:
\prod_{c \in \phi_K^{-1}(\sigma)} 
\frac{\pi_{\mathfrak p}^{\frac{\zeta(0,c)}{h_F^+}}}{\prod_{\tilde c \in \phi^{-1}( [\mathfrak p]c)}\exp_p(X_p(\tilde c;D,\mathfrak a_c))}]^{[F:\mathbb Q]} \\
&\equiv 
\prod_{c \in C_{(d)}}[\frac{\tilde\tau\left(\exp(\zeta'(0,c))\right)}{\exp(\zeta'(0,[(p)^{[F:\mathbb Q]}]c))}:
\frac{p^{\zeta(0,c)}}{\prod_{i=1}^{[F:\mathbb Q]}\left(\frac{p^{\zeta(0,[(p)^{i-1}]c)}}{\prod_{\tilde c \in \phi^{-1}([(p)^i]c)}\exp_p(\zeta'(0,\tilde c))}\right)}]^{\frac{r(c,\sigma)}{|H|}}.
\end{align*}
The same relation holds for the ``period part'' by (\ref{p1}), its $p$-adic version, and (\ref{p2}).
Hence we obtain 
\begin{align*}
\gamma(\sigma)\equiv \prod_{c \in C_{(d)}} \gamma(c)^{\frac{r(c,\sigma)}{|G|}}.
\end{align*}
Now the assertion follows from (\ref{irt}).
\end{proof}

\end{document}